\documentclass[11pt,a4paper]{amsart}
\usepackage[margin=3cm]{geometry}

\usepackage{amsmath,amsfonts,amssymb,amscd,amsthm, mathtools}
\usepackage[usenames,dvipsnames,svgnames]{xcolor}

\usepackage[utf8]{inputenc}
\usepackage{graphicx}
\usepackage{caption}
\usepackage{subcaption}
\usepackage{doi}
\usepackage{autonum}
\usepackage{makecell}
\usepackage{hyperref}

\usepackage{acronym}
\usepackage{enumitem}

% Vector bold characters

\def\n{\boldsymbol{n}}
\def\0{\boldsymbol{0}}
\def\uu{\boldsymbol{u}}
\def\vv{\boldsymbol{v}}
\def\g{\boldsymbol{g}}

\def\f{\boldsymbol{f}}

\def\R{\boldsymbol{R}}

\def\x{\boldsymbol{x}}

\def\I{\boldsymbol{I}}
% Continuous Spaces
\def\Hs{{H}^1(\Omega)}
\def\Hsr{{H}^r(\Omega)}
\def\Hso{{H}^1_0(\Omega)}
\def\Hcurl{{H}(\textbf{{curl}}, \Omega)}
\def\Hcurlr{{H}^r(\textbf{{curl}}, \Omega)}
\def\Hcurlo{{H}_0(\textbf{{curl}}, \Omega)}
% Discrete Spaces
\def\esp{{\mathcal{V}}}
\def\reffesp{\hat{\mathcal{V}}}
% Nedelec spaces
\def\nedespk{\mathcal{ND}_k}
\def\nedesphk{\mathcal{ND}_{k}(K_h^s)}
\def\nedesphh{\mathcal{ND}_{{k}}({K}_h)}

\def\nedesp2hk{\mathcal{ND}_{k}(K_{2h})}
% Lagrangian spaces
\def\lagespk{\mathcal{L}_{\boldsymbol{k}}}
\def\lagesphh{\mathcal{L}_{\boldsymbol{k}}({K}_h)}
\def\lagespthk{\mathcal{L}_{\boldsymbol{k}}(\tilde{K}_h)}
\def\lagesphk{\mathcal{L}_{\boldsymbol{k}}(K_h^s)}
\def\lagesp2hk{\mathcal{L}_{\boldsymbol{k}}(K_{2h})}
% FEs definition
\def\geophy{K}
\def\georef{{\hat K}}
\def\funmap{\Psi}
\def\geomap{\boldsymbol{\Phi}}
\def\P{\mathcal{P}}
\def\Q{\mathcal{Q}}
% Operators
\def\Am{\boldsymbol{\mathcal{A}}}
\def\Km{\boldsymbol{\mathcal{K}}}
\def\Mm{\boldsymbol{\mathcal{M}}}
\def\jacobian{\boldsymbol{J}_K}
\def\fm{\boldsymbol{f}}
\def\bsnabla{\boldsymbol{\nabla}}

\DeclarePairedDelimiter{\norm}{\lVert}{\rVert}
\newtheorem{remark}{Remark}[section]
\newtheorem{theorem}{Theorem}[section]
\newtheorem{proposition}{Proposition}[section]
\newtheorem{Example}{Example}[section]
\newtheorem{definition}{Definition}[section]

\def\sect#1{Sect.~\ref{#1}}
\def\eq#1{(\ref{#1})}
\def\eqref#1{(\ref{#1})}
\def\fig#1{Fig.~\ref{#1}}

% Acronym definitions
\acrodef{fe}[FE]{finite element}
\acrodefplural{fe}[FEs]{finite elements}
\acrodef{dof}[DoF]{degree of freedom}
\acrodefplural{dof}[DoFs]{degrees of freedom}
\acrodef{pde}[PDE]{partial differential equation}
\acrodefplural{pde}[PDEs]{partial differential equations}
\acrodef{amr}[AMR]{adaptive mesh refinement}
\acrodef{vem}[VEM]{virtual element method}

\def\FEMPAR{{\texttt{FEMPAR}~}}
% Renew commands
\def\crop_s{2.25cm}

\def\mo#1{{ #1}}

\begin{document}

\title[]{On a general implementation of $h$- and $p$-adaptive curl-conforming finite elements}

\author[M. Olm]{Marc Olm$^\dag$}

\author[S. Badia]{Santiago Badia$^{\dag,\ddag}$}

\author[A. F. Mart\'in]{Alberto F. Mart\'in$^{\dag,\ddag}$}

\thanks{$\dag$ Centre Internacional de M\`etodes Num\`erics en Enginyeria, Esteve Terrades 5, E-08860 Castelldefels, Spain. $\ddag$ Universitat Polit\`ecnica de Catalunya, Jordi Girona 1-3, Edifici C1, E-08034 Barcelona, Spain. \\ MO gratefully acknowledges the support received from the Catalan Government through the FI-AGAUR grant. SB gratefully acknowledges the support received from the Catalan Government through the ICREA Acad\`emia Research Program. This work has partially been funded by the European Union through the FP7 project FORTISSIMO, under the grant agreement 609029. E-mails: {\tt molm@cimne.upc.edu} (MO), {\tt sbadia@cimne.upc.edu} (SB), {\tt amartin@cimne.upc.edu} (AM)}

\date{\today}

\begin{abstract}
  Edge (or  N\'ed\'elec) finite elements are theoretically sound and widely used by the computational electromagnetics community. However, its implementation, \mo{especially} for high order methods, is not trivial, since it involves many technicalities that are not properly described in the literature. To fill this gap, we provide a comprehensive description of a general implementation of edge elements of first kind within the scientific software project \FEMPAR. We cover into detail how to implement arbitrary order (i.e., $p$-adaptive) elements on hexahedral and tetrahedral meshes. First, we set the three classical ingredients of the finite element definition by Ciarlet, both in the reference and the physical space: cell topologies, polynomial spaces and moments. With these ingredients, shape functions are automatically implemented by defining a judiciously chosen polynomial \emph{pre-basis} that spans the local finite element space combined with a change of basis to automatically obtain a canonical basis with respect to the moments at hand. Next, we discuss global finite element spaces putting emphasis on the construction of global shape functions through oriented meshes, appropriate geometrical mappings, and equivalence classes of moments, in order to preserve the inter-element continuity of tangential components of the magnetic field. Finally, we  extend the proposed methodology to generate global curl-conforming spaces on \emph{non-conforming} hierarchically refined (i.e., $h$-adaptive) meshes with arbitrary order finite elements. Numerical results include experimental convergence rates to test the proposed implementation.
\end{abstract}

\maketitle

%\noindent{\bf 2010 Mathematics Subject Classification:} 35Q30; 65N30; 76N10.

\noindent{{\bf {Keywords}}: edge finite elements, curl-conforming spaces, adaptive mesh refinement, implementation}
%\tableofcontents

% citet and citep
\section{Introduction} \label{sec:intro}
Edge elements were originally proposed in the seminal work by N\'ed\'elec~\cite{nedelec_mixed_1980}. They are a natural choice in electromagnetic \ac{fe} simulations due to their sound mathematical structure~\cite{monk_finite_2003}. In short, edge \ac{fe} spaces represent curl-conforming fields with continuous tangential components and discontinuous normal components. It is recognized to be their greatest advantage against Lagrangian \acp{fe}~\cite{mur_advantages}.\footnote{There are different approaches to design discretizations of the Maxwell equations that rely on Lagrangian \acp{fe} and can theoretically converge to singular solutions (see, e.g.,~\cite{badia_nodal_2012}). However, these methods are not robust for complex electromagnetic problems. In our experience~\cite{badia_HTS}, the usage of edge elements is of imperative importance in the modelling of fields near singularities by allowing normal components to jump across interfaces between two different media with highly contrasting properties.} The curl operator has null space, the gradient of any scalar field. To remove its kernel, one needs to include a zero-order term, a transient problem or a divergence-free constraint over the magnetic field times resistivity\footnote{Edge \acp{fe} provide solutions that are pointwise divergence-free in the element interiors. However, the normal component of the field can freely jump across inter-element faces. The zero inter-element jump constraint has to be enforced by a Lagrange multiplier~\cite{badia_nodal_2012} to eliminate the kernel.} (also known as Coulomb gauge)~\cite{li_vectorial_2015}.

\mo{The edge \ac{fe} method has the status of the method of choice in the computational electromagnetics community. Nevertheless, other discretization techniques are available for electromagnetic problems. Apart from the aforementioned nodal \acp{fe} \cite{badia_nodal_2012}, nodal discontinuous Galerkin methods \cite{perugia_dG_2003, badia_dG_2011} and isogeometric analysis \cite{buffa_iga} have also been considered for the Maxwell problem. Furthermore, the \ac{vem} \cite{beirao_2013}, which is designed to handle meshes consisting of arbitrary polygonal or polyhedral elements, has also been defined for curl-conforming spaces \cite{beirao_2016}. Local edge \ac{vem} spaces \cite{beirao_2016} rely on a space formulated on the boundary of the element, thus they naturally allow to design adaptive methods with conforming, arbitrary meshes.}

Edge \acp{fe} are widespread in the computational electromagnetics community, mainly for the lowest order case. In fact, they receive the name of edge elements because for first order approximations each \ac{dof} is associated with an edge of the mesh, and \acp{dof} can be understood as circulations along edges. However, its implementation, \mo{especially} for high order, is not a trivial task and involves many technicalities. On the other hand, there are few works addressing the implementation details of arbitrary-order edge \ac{fe} schemes.

The construction of local bases for edge elements has been addressed by few authors. A basis for high order methods, expressed in terms of the (barycentric) affine coordinates of the simplex (i.e., tetrahedra) is described in~\cite{gopalakrishnan_2005, Bonazzoli_2017}, including also implementation details in~\cite{Bonazzoli_implementation_2017}. \mo{The definition of affine-related coordinates is used to define expressions for the bases in tetrahedra, prisms and pyramids in \cite{fuentes_nedelec}.
Furthermore, hierarchical bases of functions can be computed for hexahedra by tensor product of 1D Legendre polynomials and the so-called 1D $H$(curl)-shape functions \cite{demkowicz_book, demkowicz_book2}, which are in turn obtained by differentiation of the former ones. Nevertheless, we prefer to rely on the \ac{fe} definition by Ciarlet, which induces the basis of shape functions as the dual basis with respect to suitable edge \acp{dof} \cite{amor_castillo, castillo_third}.
In this work, arbitrary order edge basis functions are automatically implemented by defining polynomial \emph{pre-bases} that span the local finite element spaces combined with a change of basis.}

Better documented is the construction of global curl-conforming spaces~\cite{schneebeli_2003, rognes_efficient_2009}, which relies on the Piola mapping to achieve the global continuity of the tangent components by using moments defined in the reference cell. In the case of unstructured hexahedral meshes, where non-affine geometrical mappings must be used in general, optimal convergence of \emph{standard} edge elements in the $H$(curl) norm cannot be achieved~\cite{falk_2011}. Complex geometries can still be considered using, e.g., unfitted \ac{fe} techniques~\cite{badia_unfitted_2018} on octree background meshes to avoid non-affine mappings.

Special care has to be taken in the definition of edge \acp{dof} to enforce the right continuity across cells. To prevent the so-called sign conflict, different alternatives have been proposed. A simple sign flip ~\cite{anjam_2015} can assure consistency, {i.e., all local \acp{dof} shared by two or more tetrahedra represent the same global \ac{dof},} for first order 3D \acp{fe} only. For higher order methods, a remedy can be based on the construction of all possible shape functions combined with a permutation that depends on local edge/face orientations \cite{fuentes_nedelec, castillo_99}. Our preferred solution, for simplicity and ease of implementation, is to \emph{orient} tetrahedral meshes, requiring that local nodes numbering within every element are always sorted based on their global indices~\cite{rognes_efficient_2009}. {On the other hand, although there are hexahedral meshes that cannot be oriented in 3D~\cite{agelek_orientation_2017}, we restrict
ourselves to forest-of-octrees meshes~\cite{BursteddeWilcoxGhattas11} such that that the initial coarse mesh can be oriented, e.g., a uniform, structured mesh.} %These meshes are oriented by construction provided that
%all children octants inherit the reference coordinate system of the corresponding root octant.}

Few open source \ac{fe} software projects include the implementation of edge \acp{fe} of arbitrary order. FEniCS supports arbitrary order \acp{fe}~\cite{alnaes_fenics_2015}, but is restricted to tetrahedra and has its own domain specific language for weak forms to automatically generate the corresponding \ac{fe} code, which may prevent $hp$-adaptivity. On the other hand, the deal.ii library also provides arbitrary order edge \acp{fe}~\cite{bangerth_deal.ii-general-purpose_2007,bangerth_textttdeal.ii_2016}, even on $h$-adaptive meshes, but it is restricted to hexahedral meshes. Recently, a C++ plugin that defines high order edge \ac{fe} ingredients~\cite{Bonazzoli_implementation_2017} has been developed for the FreeFEM++ library~\cite{hecht_new_2012}, but it is restricted to up to third order and tetrahedral meshes.
\mo{As far as we know, the most complete
 approaches in the literature are the implementations in MFEM~\cite{_mfem_????}, Netgen/NGSolve~\cite{Schoberl1997, netgen} and hp2D~\cite{demkowicz_book}/hp3D~\cite{demkowicz_book2}.
MFEM provides arbitrary order tetrahedral/hexhahedral edge FEs and $h$-adaptivity on non-conforming hexahedral meshes.
Netgen/NGSolve and hp3D implement arbitrary order edge prism and pyramidal FEs, besides the tetrahedral and hexahedral ones,
and hp3D provides the $hp$-adaptive method on non-conforming hexahedral meshes.}

The usage of non-nodal based (compatible or structure-preserving) \acp{fe}, especially in multi-physics applications in combination with incompressible fluid and solid mechanics \cite{Shadid2016,Ortigosa2016,GuolinWang2015}, is still scarce. It is motivated by the fact that most codes in computational mechanics are designed from inception for being used with nodal \acp{fe}, and the extension to other types of \acp{fe} seems to be a highly demanding task. \mo{Although non-nodal based \acp{fe} are theoretically well known, the poor, fractioned, and spread information on key implementation issues does not contribute to its greater diffusion in the computational mechanics community.}

The motivation of this work is to provide \emph{a comprehensive description of a novel and general implementation of edge \acp{fe} of arbitrary order on tetrahedral and hexahedral}\footnote{Although this work is restricted to hexahedra and tetrahedra, one can also find definitions for prism and pyramid edge elements~\cite{bergot_2013} {with optimal rates of convergence of the numerical solution towards the exact solution in the $\Hcurl$ norm.}} \emph{non-conforming meshes}, confronted with typical hard-coded implementations of the shape functions that preclude high order methods. With this aim, we will address three critical points that are \mo{especially} complex in the construction of arbitrary order edge \acp{fe}: 1) an automatic manner of generating the associated polynomial spaces; 2) a general solution for the so-called sign conflict; and 3) a construction of the discrete $H({\rm curl})$-conforming \ac{fe} spaces atop \emph{non-conforming} meshes. With regard to 1), we generate arbitrary order edge \ac{fe} shape functions in a simple but extensible manner. First, we propose a \emph{pre-basis} of vector-valued polynomial functions spanning arbitrary order local edge \ac{fe} spaces for both tetrahedral and hexahedral topologies; see~\cite{bergot_2010} for a related approach on tetrahedra. Next, we compute a change of basis from the \emph{pre-basis} to the canonical basis with respect to the edge \ac{fe} \acp{dof}, i.e., the basis of shape functions. To address 2) and properly determine the inter-element continuity, we rely on oriented meshes. Finally, in order to address 3),
we combine a standard \ac{amr} nodal-based implementation of constraints on non-conforming geometrical objects (a.k.a. as hanging) \cite{Bangerth_2009} with the relation between a Lagrangian \emph{pre-basis} and the edge basis, to avoid the evaluation of edge moments on the refined cells.
Besides, the same machinery can also be applied to other polynomial based \acp{fe}, like Raviart-Thomas \acp{fe}, Brezzi-Douglas-Marini \acp{fe} \cite{brezzi_mixed_1991}, second kind edge \acp{fe} \cite{nedelec_new_mixed_1986}, or recent divergence-free \acp{fe} \cite{neilan_stokes_2015}.
\mo{An alternative approach, more suitable for hierarchical bases of functions, is to compute the constraints on non-conforming objects by comparison of the two representations of a function (atop the coarse or the refined cell) at uniformly distributed collocation points \cite{demkowicz_book}.}

\mo{ In the course of an $hp-$adaptive finite element procedure, an error estimator indicates at which mesh cells the error of the computed \ac{fe} solution is higher, which will be potential candidates to be refined. For curl-conforming problems, several types of \emph{a posteriori} error estimators have been developed and analyzed. Efficient and reliable residual-based error estimators were introduced in \cite{hiptmair_adaptive_2000} and have been further developed and analyzed in \cite{schoberl_estimators, nicaise_estimators}. Among others, we also find hierarchical error estimators \cite{beck_adaptive_1999}, equilibrated estimators \cite{braess_estimator} or recovery-based estimators \cite{nicaise_estimator}.
} % can be carried out using the tools provided by \FEMPAR \cite{badia_hadaptive}

The proposed approach is the result of the experience gained by the authors after the  implementation in \FEMPAR~\cite{badia-fempar,fempar-web-page}, a scientific software project for the simulation of problems governed by \acp{pde}. In any case, we present an automatic, comprehensive strategy rather than focusing on a particular software implementation.  Consequently, thorough definitions and examples will be provided, but the exact software implementation will not be shown. The reader is directed to~\cite{badia-fempar} for code details, where an exhaustive introduction to the software abstractions of \FEMPAR is presented. %\FEMPAR offers a set of flexible data structures for each step in the simulation pipeline, which can be customized to achieve specific needs.
%In the \ac{fe} building block, it provides a powerful machinery for defining new elements such that any (hexahedral or tetrahedral) \ac{fe} based on a polynomial space can be constructed providing a proper definition of moments on the polynomial space.
%In addition to the herein presented edge \acp{fe}, also Lagrangian, Raviart-Thomas and $B$-spline \acp{fe} are implemented in \FEMPAR using the same framework, leading to $H^1$, $H$(div) and spline-based functional spaces, resp.

The text is organized as follows. In \sect{sec-formulation}, we introduce some notation and a general problem with the curl-curl formulation. In \sect{sec-ned_fes}, we will cover the definition and construction of the edge \ac{fe} of arbitrary order, focusing on an straightforward manner of generating the involved polynomial spaces. \sect{sec-assembly} is devoted to the construction of global curl-conforming \ac{fe} spaces. In \sect{sec-h_adaptive}, a strategy to deal with $h$-adaptive spaces with the edge element will be described. Finally, in \sect{sec-results}, we will show some convergence results, which will validate the implementation of the edge elements.

\section{Problem statement}\label{sec-formulation}
\subsection{Notation}\label{sec-notation}
In this section, we introduce the model problem to be solved and some mathematical notation. Bold characters will be used to describe vector functions or tensors, while regular ones will determine scalar functions or values. No difference is intended by using upper-case or lower-case letters for functions.

Let $\Omega\subset\mathbb{R}^d$ be a bounded domain with $d=2,3$ the space dimension. In 3D, the rotation operator of a vector field $\vv$ is defined as:
\begin{align}\label{eq-curl_operator}
\bsnabla \times \vv = \begin{bmatrix}
\partial_2 v_3 - \partial_3 v_2 \\
\partial_3 v_1 - \partial_1 v_3 \\
\partial_1 v_2 - \partial_2 v_1
\end{bmatrix}.
\end{align}
We use the standard multi-index notation for derivatives, with $\boldsymbol{\alpha} := (\alpha_1, \ldots, \alpha_d)^T \in \mathbb{Z}_ +^d$ and $|\boldsymbol{\alpha}|_1 = \sum_{i=1}^d |\alpha_i|$. Let us define the spaces
\begin{eqnarray}
  \Hsr  &:=& \{ v \in L^2(\Omega)  \hbox{ }| \hbox{ } \partial^{\boldsymbol{\alpha}} v \in L^2(\Omega) \hbox{ for all } |\boldsymbol{\alpha}|_1 \leq r  \}, \\
\Hcurlr &:=& \{ \vv \in \Hsr^d  \hbox{ }| \hbox{ } \bsnabla \times \vv \in \Hsr^{d} \}. \label{eq-Hcurl}
\end{eqnarray}
The space ${H}^0(\textbf{{curl}}, \Omega)$ is represented with ${H}(\textbf{{curl}}, \Omega)$. We also consider the subspaces
\begin{eqnarray}
\Hso  &:=& \{ v \in \Hs \hbox{ }| \hbox{ } v=0 \hbox{ in } \partial\Omega \}, \\
\Hcurlo &:=& \{ \vv \in \Hcurl \hbox{ }| \hbox{ } \n \times \vv = \0 \hbox{ in } \partial\Omega \},
\end{eqnarray}
where $\n$ denotes the outward unit normal to the boundary of the domain $\Omega$. The space $\Hcurl$ in \eq{eq-Hcurl} is equipped with the norm
\[
\norm{\uu}_{H(\textbf{{curl}}, \Omega)} := \left( \norm{\uu}_{L^2(\Omega)^d}^2 + \norm{\bsnabla \times \uu}_{L^2(\Omega)^{d}}^2 \right)^{\frac{1}{2}}.
\]
{In 2D, a scalar version of the curl operator can be defined as $\mathrm{curl} (\vv) \doteq \partial_1 v_2 - \partial_2 v_1$. We can similarly define $H^r({\rm curl}, \Omega)$ and its related subspaces}. In the following, we consider the notation for the 3D case.

%\mo{For $k\in \mathbb{N}$, $H^k(\Omega)$ denotes the Sobolev space of real functions belonging to $L^2(\Omega)$ together with all their distributional derivatives of order less than or equal to $k$. Consider $\tilde{d}={1}$ for 2D and $\tilde{d}=3$ for 3D for the definition of the space. Let us also consider the Sobolev space $H^s(\Omega)$ for $s \in \mathbb{R}$, whose definition can be found in CITE Adams. Then, we can define the space
%\begin{eqnarray}
%H^s(\textbf{{curl}}, \Omega) &:=& \{ \vv \in (H^s(\Omega))^d \hbox{ }| \hbox{ } \bsnabla \times \vv \in (H^s(\Omega))^d \}.
%\end{eqnarray}
%}

\subsection{Formulation}\label{sec-cc_form}
The model problem consists in finding a vector field $\uu$ (magnetic field) solution of
\begin{align}
\bsnabla \times \alpha \bsnabla \times \uu + \beta \uu = \f & \qquad \hbox{in } \, \Omega,   \label{eq-dcurl_p}
\end{align}
where $\f$ is a given source term. Problem parameters $\alpha, \beta$ can range from scalar, positive values for isotropic materials to positive definite tensors for anisotropic materials. Besides, (\ref{eq-dcurl_p}) needs to be supplied with appropriate boundary conditions. The boundary of the domain $\partial \Omega$ is divided into its Dirichlet boundary part, i.e., $\partial \Omega_D$, and its Neumann boundary part, i.e., $\partial \Omega_N$, such that $\partial \Omega_D \cup \partial \Omega_N = \partial \Omega$ and $ \partial \Omega_D \cap \Omega_N = \emptyset$. Then, boundary conditions for the problem at hand read
\begin{align}
\uu \times \n = \g_D && \quad \hbox{on } \, \partial \Omega_D, \\
\n \times (\alpha \bsnabla \times \uu) = \g_N && \quad \hbox{on } \, \partial \Omega_N,
\end{align}
where $\n$ is a unit normal to the boundary. %Note that $\n \times (\uu \times \n)$ and $\uu \times \n$ are perpendicular, have the same length and both are perpendicular to $\n$. Therefore, Dirichlet boundary conditions can be expressed either in both forms.\sb{sentence before, pretty basic comments, probably eliminate them if not being used}
Dirichlet (essential) boundary conditions prescribe the tangent component of the field $\uu$ on the boundary of the domain. On the other hand, Neumann (natural) boundary conditions arise in the integration by parts of the curl-curl term
\begin{eqnarray}\label{eq-neum_cond}
\int_\Omega (  \bsnabla \times \alpha \bsnabla \times \uu ) \cdot \vv &=& \int_\Omega ( \alpha \bsnabla \times \uu ) \cdot (\bsnabla \times \vv) - \int_{\Omega_N} (\alpha \bsnabla \times \uu) \cdot (\n \times \vv ).
\end{eqnarray}
Consider now two different non-overlapping regions on the domain $\Omega$ corresponding to two different media (usual case in electromagnetic simulations), namely $\Omega_1$ and $\Omega_2$, such that $\Omega_1 \cup \Omega_2 = \Omega$, and let us define the \emph{interface} as $\Gamma := \Omega_1 \cap \Omega_2$. Let us denote by $\n_1, \n_2$ the unit normal pointing outwards of $\Omega_1$ and $\Omega_2$ on $\Gamma$, resp. The \emph{transmission} conditions (in absence of other sources) for \eq{eq-dcurl_p} are stated as follows:
\[
 \n \times (\alpha_1 \bsnabla \times \uu_1 - \alpha_2 \bsnabla \times \uu_2 ) = 0 \qquad \text{ on } \Gamma,
\]
where $\n$ can, e.g., be $\n_1=-\n_2$. For the sake of simplicity in the presentation (not in the implementation), let us consider $\partial \Omega_D = \partial \Omega$, so that the Neumann boundary term \eq{eq-neum_cond} can be removed from the weak form, and homogeneous Dirichlet boundary conditions. Hence, the variational form of the double curl formulation \eq{eq-dcurl_p} reads: find $\uu \in \Hcurlo$ such that
\begin{eqnarray}
 (\alpha \bsnabla \times \uu, \bsnabla \times \vv) + ( \beta \uu, \vv ) = (\f, \vv),  & \quad \forall \vv \in \Hcurlo. \label{eq-variational_form}
\end{eqnarray}

\section{Edge \acp{fe}}\label{sec-ned_fes}
Let $\mathcal{T}_h$ be a quasi-uniform partition of $\Omega$ into a set of hexahedra (quadrilaterals in 2D) or tetrahedra (triangles in 2D) geometrical cells $\geophy$. For every $\geophy \in \mathcal{T}_h$, we denote by $h_\geophy$ its diameter and set the characteristic mesh size as $h = \max_{\geophy \in \mathcal{T}_h } h_\geophy$.
Let us denote by $v\in\mathcal{N}$, $e\in\mathcal{E}$ and $f\in\mathcal{F}$ the components and global sets of vertices, edges and faces of $\mathcal{T}_h$, with cardinality $n_\mathcal{N}, n_\mathcal{E}$ and $n_\mathcal{F}$, resp. Using Ciarlet's definition, a \ac{fe} is represented by the triplet $ \{ \geophy, \esp, \Sigma \}$, where $\esp$ is the space of functions on $\geophy$ and $\Sigma$ is a set of linear functionals on $\esp$. The elements of $\Sigma$ are called \acp{dof} (moments) of the \ac{fe}. We will denote the number of functionals on the cell as $n_\Sigma$, and $\Sigma \doteq \{ \sigma_a \}_{a=1}^{n_\Sigma}$.
$\Sigma$ is a basis for $\esp'$, which is dual to the so-called basis of shape functions $\{\phi^a\}_{a=1}^{n_\Sigma}$ for $\esp$, i.e., $\sigma_a(\phi^b)=\delta_{ab}$,  $\forall a,b \in \{ 1, \ldots, n_\Sigma \}$.

At this point, we must distinguish between the \emph{reference} \ac{fe}, built on a reference cell, and the \ac{fe} in the physical space. Our implementation of the space of functions and moments is based on a unique {reference} \ac{fe} $\{ \georef, \reffesp, \hat{\Sigma} \}$. Then, in the physical space, the \ac{fe} triplet on a cell $\geophy$ relies on its {reference} \ac{fe}, a geometrical mapping $\geomap_K$ such that $K=\geomap({\hat{K}})$ and a linear bijective function mapping $\hat{\funmap}_\geophy: \reffesp \rightarrow \reffesp$. It is well known that quadrilateral \acp{fe} may result in a loss of accuracy on general meshes, e.g., when elements are given as images of hexahedra under invertible bilinear maps, in comparison with the accuracy achieved with squares or cubes~\cite{boffi_2002}. In this work, hexahedra in the physical space are obtained with affine transformations from the reference square or cube, thus optimal convergence properties are guaranteed. Nevertheless, this fact does not restrict the simulations to simple geometries, since unfitted approaches~\cite{badia_unfitted_2017} may be employed to model complex geometries with structured background meshes. Let us denote by $\jacobian$ the Jacobian of the geometrical mapping, i.e., $ \jacobian \doteq \frac{\partial \geomap_K}{\partial \x}$. The functional space is defined as $\esp \doteq \{ \hat{\funmap}_\geophy(\hat{v}) \circ \geomap^{-1}_\geophy : \, \hat{v} \in \reffesp\}$; we will also use the mapping ${\funmap}_\geophy: \reffesp \rightarrow \esp$ defined as ${\funmap}_\geophy(\hat{v}) \doteq \hat{\funmap}_\geophy (\hat{v}) \circ \geomap^{-1}_\geophy$. Finally, the set of \acp{dof} on the physical \ac{fe} is defined as $\Sigma \doteq \{ \hat{\sigma} \circ {\funmap}_\geophy^{-1} \, : \, \hat{\sigma} \in \hat{\Sigma} \}$ from the set of the {reference} \ac{fe} linear functionals. In the following subsections, we go into detail into these concepts and provide some practical examples.

\subsection{{Reference} cell}\label{subsec-ref_fe}
A \emph{polytope} is mathematically defined as the convex hull of a finite set of points (vertices). The concept of polytope may be of practical importance, because it allows one to develop codes that can be applied to any topology of arbitrary dimension that fits into the framework, see~\cite{badia-fempar} for a thorough exposition. For the sake of ease, we restrict ourselves to two possible polytopes: $d$-cubes and $d$-simplices (with $d=2,3$), defined as the convex hull of a set of $n_v=2^d$ and $n_v=d+1$ geometrically distinct points, respectively. An \emph{ordered} set of vertices $\{\vv_1, \ldots, \vv_{n_v}\}$ not only defines the topology of the cell $\georef$ but an \emph{orientation}. Edges and faces are polytopes of lower dimension, and an ordered set of their vertices also defines their orientation.
%Given an edge sharing an ordered set of vertices $\{\vv_0,\vv_1\}$ with respect to the global vertex index, the edge coordinate system can be taken as $\vec{\e}_1 = \vv_1 - \vv_0$. Analogously for faces and its ordered set of vertices $\{\vv_0,\vv_1,\vv_2,\vv_3\}$, one can take the  coordinate system determined by $\vec{\e}_1 = \vv_1 - \vv_0$ and $\vec{\e}_2 = \vv_2 - \vv_0$.
%By sorting the vertices in the mesh by a global vertex index, a global orientation can be assigned to every edge $e\in\georef$ joining two vertices $v_i$, $v_j$ by considering the vector that points to the vertex with higher index. \sb{I don't understand the following sentence...
%Analogously, a global face orientation can be determined
%, taking $e_1 = $.} In the same manner, an orientation given by two directions is defined on every face, from an anchor node $n_0 = \min_{i} (v_i \subset F)$, to nodes that have indices in ascending order $n_1 = \min_{i} (v_i \subset F \setminus v_0)$ and $n_2 = \min_{i} (v_i \subset F \setminus \{n_0,n_1\})$.
For the sake of illustration, the local indexing at the cell level is depicted in Figs. \ref{fig-3D_hex_vefs} and \ref{fig-3D_tet_vefs} for a 3-cube and a 3-simplex, resp. The 2D cases follow by simply considering the restriction of the 3D cell to one of its faces.

\subsection{Polynomial spaces}\label{subsec-polspaces}
Local \ac{fe} spaces are usually spanned by polynomial functions. Let us start by defining basic polynomial spaces that will be needed in the forthcoming definitions.
{ We define here all polynomial spaces in an arbitrary polytope $K$, but they will only be used for the reference cell in the next sections. }The space of polynomials of degree less than or equal to $k>0$ in all the variables $\{x_i\}_{i=1}^d$ is denoted by $\mathcal{Q}_k(K)$. Analogously, we can define the space of polynomials of degree less than or equal to $\{k_i\}_{i=1}^d$ for the variable $\{x_i\}_{i=1}^d$, denoted by $\Q_{\boldsymbol{k}}(K)$, with $\boldsymbol{k}=[k_1, \ldots, k_d]$. Clearly, the dimension of this space, denoted by dim($\mathcal{Q}_{\boldsymbol{k}}(K)$), is $\prod_{i=1}^d (k_i+1)$. Let us also define the corresponding truncated polynomial space $\mathcal{P}_k(K)$ as the span of the monomials with degree less than or equal to $k$. To determine the dimension of the truncated space, we note that the number of components can be expressed with the triangular or tetrahedral number $T^d_n$, $d=\{2,3\}$, such that
\begin{eqnarray}\label{eq-trinum}
{\rm dim}(\mathcal{P}_k(K))=T^d_{k+1},  \qquad T_n^d = \frac{\prod_{i=1}^{d}(n+i-1)}{d!}.
\end{eqnarray}

\subsubsection{Construction of polynomial spaces}\label{subsec-lag_polys}
Given an order $k$, it is trivial to form the set of monomials $q^k = \{x^i\}_{i=0}^k$, that spans the 1D space $\mathcal{Q}_k(K)$. We construct a basis spanning the multi-dimensional space $\mathcal{Q}_{\boldsymbol{k}}(K)$ with a Cartesian product of the monomials for each dimension, i.e., $\{ q_1^{k_1} \times \ldots \times q_d^{k_d} \}$, thus we have $\mathcal{Q}_{\boldsymbol{k}}(K) = {\rm span} \{ \prod_{i=1}^d x_i^{\alpha_i} \hspace{0.2cm} {\rm s.t. } \hspace{0.2cm} \alpha_i \leq k_i  \}$. Let us denote by $|\alpha|$ the summation of the exponents for a given monomial. Then, the multi-dimensional truncated space is defined as $\mathcal{P}_k(K) = {\rm span} \{ \prod_{i=1}^d x_i^{\alpha_i} \hspace{0.2cm} {\rm s.t. } \hspace{0.2cm} |\alpha| \leq k  \}$.

Let us also define Lagrangian polynomials spanning $\mathcal{Q}_{\boldsymbol{k}}(K)$, which will be used in the definition of moments in \sect{subsec-ned_moms}. Given an order $k$ and a set $\mathcal{N}_k$ of different nodes in $\mathbb{R}$, usually equidistant in the interval $[x_0, x_k]$, we can define the set of Lagrangian polynomials $\{\ell_i\}_{i=0}^k$ as,
\begin{eqnarray}
\ell_i &:=& \displaystyle\frac{ \prod_{n\in \mathcal{N}_k \setminus i}( x - x_n )}{\prod_{n\in \mathcal{N}_k \setminus i} (x_i - x_n)}, \label{eq-lag_pols}
\end{eqnarray}
where we indistinctly represent nodes by their index $i$ or its position $x_i$. The set of all polynomials $\{\ell_i^k\}_{i=0}^k$ defines the Lagrangian basis $L^k$. In the grad-conforming (Lagrangian) \ac{fe}, the definition of moments $\{ \sigma_a \}_{a=1}^{k}$ simply consists in the evaluation of the functions on the given points $x \in \mathcal{N}_k$. Clearly, polynomials in \eq{eq-lag_pols} evaluated at points $x_a$ satisfy the duality $\sigma_a(\ell_i) = \ell_i(x_a) = \delta_{ai}$ for every $a,i \in \{0, \ldots k\}$, i.e., they are shape functions. For multi-dimensional spaces, we define the set of nodes as the Cartesian product of 1D nodes. Given a $d$-dimensional space of order $\boldsymbol{k}=[k_1, \ldots, k_d]$, the set of nodes is defined as $\mathcal{N}^{\boldsymbol{k}} = \mathcal{N}^{k_1}\times \ldots \times \mathcal{N}^{k_d}$.

\subsubsection{Hexahedra}
The space of functions on the cell $\esp_{k}({K})$ for this sort of elements is defined as the space of gradients for the scalar polynomial space $\Q_k(K)$, i.e.,
\begin{eqnarray}
 \esp_{k}({K}) &:=& \{ \Q_{k-1,k}({K}) \times \Q_{k,k-1}({K}) \}, \label{eq-ps_hex1}\\
 \esp_{k}({K}) &:=& \{ \Q_{k-1,k,k}({K}) \times  \Q_{k,k-1,k}({K}) \times \Q_{k,k,k-1}({K}) \}, \label{eq-ps_hex}
\end{eqnarray}
in 2D and 3D, resp. Let us illustrate the polynomial space with a couple of examples.
\begin{Example} The polynomial space for the lowest order ($k=1$) edge hexahedral element is defined as
\[
\esp_{k}({\geophy}) =  \{ \Q_{0,1,1}({K}) \times  \Q_{1,0,1}({K}) \times \Q_{1,1,0}({K}) \}
\]
which can be represented as the span of the set of vector-valued functions
\begin{eqnarray*}
\left\lbrace
\begin{bmatrix}
1 \\ 0 \\ 0
\end{bmatrix},
\begin{bmatrix}
x_2 \\ 0 \\ 0
\end{bmatrix},
\begin{bmatrix}
x_3 \\ 0 \\ 0
\end{bmatrix},
\begin{bmatrix}
x_2 x_3 \\ 0 \\ 0
\end{bmatrix},
\begin{bmatrix}
0 \\ 1 \\ 0
\end{bmatrix},
\begin{bmatrix}
0 \\ x_1 \\ 0
\end{bmatrix},
\begin{bmatrix}
0 \\ x_3 \\ 0
\end{bmatrix},
\begin{bmatrix}
0  \\ x_1 x_3 \\ 0
\end{bmatrix},
\begin{bmatrix}
0 \\ 0 \\ 1
\end{bmatrix},
\begin{bmatrix}
0 \\ 0 \\ x_1
\end{bmatrix},
\begin{bmatrix}
0 \\ 0 \\ x_2
\end{bmatrix},
\begin{bmatrix}
0  \\ 0 \\ x_1 x_2
\end{bmatrix}
\right\rbrace.
\end{eqnarray*}
\end{Example}

\begin{Example} The polynomial space for the second order quadrilateral element is defined as
\[
\esp_{k}({K}) =  \{ \Q_{1,2}({K}) \times  \Q_{2,1}({K}) \},
\]
which can be represented by the spanning set
\begin{eqnarray*}
\left\lbrace
\begin{bmatrix}
1 \\ 0
\end{bmatrix},
\begin{bmatrix}
x_1 \\ 0
\end{bmatrix},
\begin{bmatrix}
x_2 \\ 0
\end{bmatrix},
\begin{bmatrix}
x_1 x_2 \\ 0
\end{bmatrix},
\begin{bmatrix}
x_2^2 \\ 0
\end{bmatrix},
\begin{bmatrix}
x_1 x_2^2 \\ 0
\end{bmatrix},
\begin{bmatrix}
 0 \\ 1
\end{bmatrix},
\begin{bmatrix}
 0 \\ x_1
\end{bmatrix},
\begin{bmatrix}
 0 \\ x_1^2
\end{bmatrix},
\begin{bmatrix}
 0 \\ x_2
\end{bmatrix},
\begin{bmatrix}
 0 \\ x_1 x_2
\end{bmatrix},
\begin{bmatrix}
 0 \\ x_1^2 x_2
\end{bmatrix}
\right\rbrace.
\end{eqnarray*}
\end{Example}

\subsubsection{Tetrahedra}\label{subsec-poltet}
The polynomial space for tetrahedral elements is slightly more involved. For the sake of brevity, let us omit the cell (i.e., $K$) in the notation for the local \ac{fe} spaces. Let us start by defining the homogeneous polynomial space $[\mathcal{\tilde{P}}_k]^d:=[\mathcal{P}_k]^d \setminus [\mathcal{P}_{k-1}]^d$, where $[\mathcal{P}_k]^2 = \mathcal{P}_k \times \mathcal{P}_k$ and $[\mathcal{P}_k]^3 = \mathcal{P}_k \times \mathcal{P}_k \times \mathcal{P}_k$. The space $[\mathcal{\tilde{P}}_k]^d$ has dimension $d\cdot {\rm dim}(\mathcal{\tilde{P}}_k)$, and using \eq{eq-trinum} we have:
\begin{eqnarray}\label{eq-homonum}
{\rm dim}(\mathcal{\tilde{P}}_k) = {\rm dim}({\mathcal{P}_k}) - {\rm dim}({\mathcal{P}_{k-1}})
                                 = T_{k+1}^d - T_k^d \nonumber
                                 = T^{d-1}_{k+1}.  \nonumber
\end{eqnarray}
The function space of order $k$ on a tetrahedron is then defined as
\begin{eqnarray}\label{eq-ps_tet}
\esp_{k}({K}) = [\mathcal{P}_{k-1}]^d \oplus \mathcal{S}_k,
\end{eqnarray}
where $\mathcal{S}_k$ is the space of polynomials
\[
\mathcal{S}_k := \{ p(\x) \in [\tilde{\mathcal{P}}_k]^d \text{ such that } p(\x)\cdot \x = 0 \}.
\]
Note that if $ p(\x)\in [\tilde{\mathcal{P}}_k]^d$, then $p\cdot \x \in \tilde{\mathcal{P}}_{k+1}$ and any polynomial in $\mathcal{\tilde{P}}_{k+1}$ may be written in this way. The dimension of the space $\mathcal{S}_k$ is
\begin{eqnarray}\label{eq-dimSk}
{\rm dim}(\mathcal{S}_k) = d \cdot {\rm dim}(\mathcal{\tilde{P}}_k) - {\rm dim}(\mathcal{\tilde{P}}_{k+1}),
\end{eqnarray}
which leads to $(k+2)k$ in 3D and $k$ in 2D, resp. Among all the possible forms of representing the space $\mathcal{S}_k$ in 3D, we consider the following spanning set
\begin{subequations}\label{eq-basis_Sk}
\begin{align}
\mathcal{S}_k = {\rm span} & \left\lbrace \bigcup_{\beta=1}^{k} \bigcup_{\alpha=1}^{k+1-\beta}
\left( \begin{bmatrix}
 -x_1^{\alpha-1} x_2^{k-\alpha-\beta+2} x_3^{\beta-1}  \\
  x_1^{\alpha} x_2^{k-\alpha-\beta+1} x_3^{\beta-1}  \\
   0
\end{bmatrix},
\begin{bmatrix}
 -x_1^{k-\alpha-\beta+1} x_2^{\beta-1} x_3^{\alpha}  \\
   0                                          \\
  x_1^{k-\alpha-\beta+2} x_2^{\beta-1} x_3^{\alpha-1}
\end{bmatrix}\right),\right.\label{eq-basis_Sk1}  \\
& \left. \bigcup_{\alpha=1}^k  \begin{bmatrix}
0                                                      \\
- x_1^0 x_2^{\alpha-1}  x_3^{k-\alpha+1}                \\
  x_1^0 x_2^{\alpha}    x_3^{k-\alpha}
\end{bmatrix} \right\rbrace. \label{eq-basis_Sk2}
\end{align}
\end{subequations}
\begin{proposition}
The set of vector functions defined in \eq{eq-basis_Sk} forms a basis of the space $\mathcal{S}_k$.
\end{proposition}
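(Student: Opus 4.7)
The plan is to proceed in two movements: first verify that every vector in the spanning set \eqref{eq-basis_Sk} actually lies in $\mathcal{S}_k$, and then establish linear independence; since the cardinality of the spanning set will match $\dim(\mathcal{S}_k)$, these two facts together yield that it is a basis.

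For the first step, I would check componentwise that each vector is homogeneous of degree $k$ and orthogonal to $\x$. For a vector of the form in \eqref{eq-basis_Sk1}, summing the three exponents in each nonzero component gives $k$ exactly, and the dot product with $\x$ collapses to the cancellation of a single monomial; the same check goes through for the vectors in \eqref{eq-basis_Sk2}. Thus every vector in the set belongs to $\tilde{\mathcal{P}}_k^3$ and satisfies $p(\x)\cdot \x = 0$, i.e., lies in $\mathcal{S}_k$. The cardinality count is then a short arithmetic exercise: the two subfamilies in \eqref{eq-basis_Sk1} each contribute $\sum_{\beta=1}^{k}(k+1-\beta) = k(k+1)/2$ vectors, and \eqref{eq-basis_Sk2} contributes $k$, for a total of $k(k+1) + k = k(k+2)$, matching \eqref{eq-dimSk}.

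The main step, and the only genuine obstacle, is linear independence. I would tackle it by inspecting the three components of an arbitrary null linear combination and exploiting that the monomials appearing in each component are pairwise distinct. Writing the coefficients as $a_{\alpha\beta}$, $b_{\alpha\beta}$, $c_{\alpha}$ attached, respectively, to the first subfamily of \eqref{eq-basis_Sk1}, its second subfamily, and the vectors of \eqref{eq-basis_Sk2}, the second component of the combination reads
\begin{equation*}
\sum_{\alpha,\beta} a_{\alpha\beta}\, x_1^{\alpha} x_2^{k-\alpha-\beta+1} x_3^{\beta-1} \;-\; \sum_{\alpha} c_\alpha\, x_2^{\alpha-1} x_3^{k-\alpha+1} \;=\; 0.
\end{equation*}
The first sum only involves monomials with $x_1$-degree $\geq 1$, the second only monomials with $x_1$-degree $0$, so each sum vanishes separately. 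Within the first sum the pair $(\alpha,\beta-1)$ can be read off the exponents of $x_1$ and $x_3$, giving distinct monomials and hence $a_{\alpha\beta}=0$; within the second, the exponent $\alpha-1$ of $x_2$ distinguishes the monomials and forces $c_\alpha=0$. Feeding $c_\alpha=0$ into the third component of the combination reduces it to $\sum_{\alpha,\beta} b_{\alpha\beta}\, x_1^{k-\alpha-\beta+2} x_2^{\beta-1} x_3^{\alpha-1} = 0$, whose monomials are again distinguished by the $(x_2,x_3)$-exponents $(\beta-1,\alpha-1)$, forcing $b_{\alpha\beta}=0$.

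With linear independence established and the cardinality equal to $\dim(\mathcal{S}_k) = k(k+2)$ by \eqref{eq-dimSk}, the set in \eqref{eq-basis_Sk} is a basis of $\mathcal{S}_k$. I expect no subtle obstacle beyond keeping the index bookkeeping consistent: the delicate part is confirming that the triples of exponents appearing in each of the three families are genuinely distinct within the family and, where the $x_1$-degree is the same, that the families do not overlap in a way that would mix coefficients across them—which is exactly what the separation-by-$x_1$-degree trick above rules out.
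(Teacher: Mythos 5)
Your proof is correct and follows essentially the same route as the paper: verify membership in $\mathcal{S}_k$ (degree-$k$ homogeneity and $p(\x)\cdot\x=0$), count $k(k+2)$ functions to match \eqref{eq-dimSk}, and establish linear independence by exploiting the component structure and the $x_1$-degree separation between the families. The only difference is that you spell out the independence argument explicitly via the exponent bookkeeping, where the paper merely sketches it by noting the differing non-zero components and the $x_1$-(in)dependence of the subsets.
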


\begin{proof}
First, we note that the proposed basis contains $\{ p_i(\x) \}_{i=1}^{(k+2)k}$ vector functions. Clearly, the total degree of the monomials found on each component for all functions is $k$, thus $S \subset \tilde{\mathcal{P}}_k^d$. Further, it is easy to check that $ p_i(\x)\cdot \x=0$ for any $p_i(\x) \in S$. The proof is completed by showing that all the functions are linearly independent. It is trivial to see that all the functions in the first set of functions are indeed linearly independent; the two sets have different non-zero components. Finally, vector functions from the last subset \eq{eq-basis_Sk2} are independent of $x_1$, whereas all functions in the previous subset in \eq{eq-basis_Sk1} do depend on $x_1$ and are linearly independent among them.
\end{proof}
In two dimensions, the analytical expression of the spanning set simply reduces to
\begin{align}
S_k = {\rm span} \left\lbrace \bigcup_{\alpha=1}^{k}
\begin{bmatrix}
- x_1^{\alpha-1} x_2^{k-\alpha+1}                \\
  x_1^{\alpha}   x_2^{k-\alpha}
\end{bmatrix} \right\rbrace.
\end{align}

The dimension of the space $\esp_{k}({K})$ for tetrahedra can be obtained by adding (\ref{eq-trinum}) and (\ref{eq-dimSk}),
\begin{eqnarray}\label{eq-dimVk}
{\rm dim}(\esp_{k}(K))=\frac{k \prod_{i=2}^d ( k+i ) }{(d-1)!}.
\end{eqnarray}
Let us give some examples of polynomial bases for tetrahedral edge elements spanning the requested spaces.
\begin{Example}\label{ex-tet_3D_ps}
A set of polynomial spanning $\esp_{1}(K) = [P_0]^3 \oplus \mathcal{S}_1$ (i.e., lowest order tetrahedral element) is
\begin{eqnarray*}
\left\lbrace
\begin{bmatrix}
1 \\ 0 \\ 0
\end{bmatrix},
\begin{bmatrix}
0 \\ 1 \\ 0
\end{bmatrix},
\begin{bmatrix}
0 \\ 0 \\ 1
\end{bmatrix},
\begin{bmatrix}
-x_2  \\ x_1 \\ 0
\end{bmatrix},
\begin{bmatrix}
-x_3 \\ 0 \\ x_1
\end{bmatrix},
\begin{bmatrix}
0 \\ -x_3 \\ x_2
\end{bmatrix}
\right\rbrace.
\end{eqnarray*}
\end{Example}

\begin{Example}\label{ex-tet_2D_ps}
A polynomial base spanning $\esp_{2}(K) = [P_1]^2 \oplus \mathcal{S}_2$ (i.e., second-order triangular element) is
\begin{eqnarray*}
\left\lbrace
\begin{bmatrix}
1 \\ 0
\end{bmatrix},
\begin{bmatrix}
x_1 \\ 0
\end{bmatrix},
\begin{bmatrix}
x_2 \\ 0
\end{bmatrix},
\begin{bmatrix}
0 \\ 1
\end{bmatrix},
\begin{bmatrix}
0 \\ x_1
\end{bmatrix},
\begin{bmatrix}
0 \\ x_2
\end{bmatrix},
\begin{bmatrix}
-x_2^2 \\ x_1 x_2
\end{bmatrix},
\begin{bmatrix}
-x_1 x_2 \\ x_1^2
\end{bmatrix}
\right\rbrace.
\end{eqnarray*}
\end{Example}
In both cases, vector functions that span the subspaces $[\mathcal{P}_{k-1}]^d \subset \esp_k(K)$ and $\mathcal{S}_k \subset \esp_k(K)$ can easily be identified from the full sets of vector-valued functions in Exs. \ref{ex-tet_3D_ps} and \ref{ex-tet_2D_ps}.

Note that  local spaces $\esp_k({K})$ lie between the full polynomial spaces of order $k-1$ and $k$, i.e., $[\Q_{k-1}({K})]^d \subset \esp_k({K}) \subset [\Q_{k}({K})]^d$, $[\P_{k-1}({K})]^d \subset \esp_k({K}) \subset [\P_{k}({K})]^d$ for hexahedra and tetrahedra, resp. This kind of elements are called edge \acp{fe} of the first kind~\cite{nedelec_mixed_1980}. Another edge \ac{fe}, the so-called second kind, was introduced also by N\'ed\'elec in~\cite{nedelec_new_mixed_1986}. It follows similar ideas but considers full polynomial spaces, i.e.,  $[\mathcal{Q}_k(K)]^d$ or $[\mathcal{P}_k(K)]^d$, instead of anisotropic polynomial spaces (see \eq{eq-ps_hex}) or incomplete polynomial spaces (see \eq{eq-ps_tet}) for hexahedra and tetrahedra, resp., which clearly simplifies the implementation of the polynomial spaces. Edge elements of the second kind offer better constants in the error estimates at the cost of increasing the number of \acp{dof} (see detailed definitions in~\cite[Ch.\ 2]{Cohen2017}).

\subsection{Edge moments in the reference element}\label{subsec-ned_moms}
In order to complete the definition of edge \acp{fe}, it remains to define a set of (linearly independent) functionals to be applied on $\esp_k({\hat{K}})$. Edge moments are integral quantities over geometrical sub-entities of the cell against some test functions. These moments involve directions in 1D entities (edges) and orientations in 2D (faces) or 3D (volumes). Thus, we need to define local orientations for the lower dimension geometrical entities of the {reference} cell, see Figs.~\ref{fig-3D_hex_vefs} and \ref{fig-3D_tet_vefs}. Let us distinguish between three different subsets of moments $\hat{\Sigma}$, namely \acp{dof} related to edges $\sigma_{\hat{e}}$, to faces $\sigma_{\hat{f}}$ and volume $\sigma_{\hat{K}}$ such that $\hat{\Sigma} = \sigma_{\hat{e}} \cup \sigma_{\hat{f}} \cup \sigma_{\hat{K}}$.

\subsubsection{Hexahedra}\label{subsubsec-hex-ref-moms}
The set of functionals, for the {reference} element, that form the basis in two dimensions reads (\cite[Ch.\ 6]{monk_finite_2003}):
\begin{subequations}
\begin{eqnarray}
\sigma_{\hat{e}}(\hat{\uu}_h)&:= & \int_{\hat{e}} ( \hat{\uu}_h\cdot \hat{\boldsymbol{\tau}} ) \hat q  \quad \forall \hat q \in \mathcal{P}_{k-1}(\hat{e}), \quad \forall \hat{e} \in \georef, \label{eq-hexnedge_2D} \\
\sigma_{\hat{K}}(\hat{\uu}_h)&:= &  \int_{{\georef}}  \hat{\uu}_h \cdot \hat{\boldsymbol{q}} \quad \forall \hat{\boldsymbol{q}} \in \Q_{k-1,k-2}(\hat{K}) \times \Q_{k-2,k-1}(\hat{K}). \label{eq-hexninner_2D}
\end{eqnarray}
\end{subequations}
We will have $4k$ \acp{dof} associated to edges and $2k(k-1)$ inner \acp{dof}. Therefore, the complete set of functionals has cardinality $n_{\hat{\Sigma}}=2k(k+1)$. In the case of $k=1$, only edge \acp{dof} $\sigma_{\hat{e}}$ appear. In three dimensions the set is defined as
\begin{subequations}
\begin{align}
\sigma_{\hat{e}}(\hat{\uu}_h):= & \int_{\hat{e}} ( \hat{\uu}_h \cdot \boldsymbol{\hat{\tau}} ) \hat q  \qquad \forall \hat q \in \mathcal{P}_{k-1}(\hat{e}), \quad \forall \hat{e} \in \georef \label{eq-hexnedge} \\
\sigma_{\hat{f}}(\hat{\uu}_h):= & \int_{\hat{\mathcal{F}}} ( \hat{\uu}_h \times \hat{\n} )\cdot \hat{\boldsymbol{q}} \qquad \forall \hat{\boldsymbol{q}} \in \Q_{k-2,k-1}(\hat{\mathcal{F}})\times \Q_{k-1,k-2}(\hat{\mathcal{{F}}}), \quad \forall \hat{\mathcal{F}} \in \georef \label{eq-hexneface} \\
\begin{split}
\sigma_\georef(\hat{\uu}_h):= & \int_{{\georef}}  \hat{\uu}_h \cdot \hat{\boldsymbol{q}}  \qquad \forall \hat{\boldsymbol{q}} \in \Q_{k-1,k-2,k-2}({\georef})\times \Q_{k-2,k-1,k-2}({\georef}) \times \label{eq-hexninner}
 \\
&  \Q_{k-2,k-2,k-1}({\georef}),
\end{split}
\end{align}
\end{subequations}
where $\hat{\boldsymbol{\tau}}$ is the unit vector along the edge and $\hat{\n}$ the unit normal to the face. In this case, we have $12k$ \acp{dof} associated to edges, $6\cdot 2k(k-1)$ \acp{dof} associated to the 6 faces of the cell and $3k(k-1)^2$ inner \acp{dof}. We have a total number of $n_{\hat{\Sigma}}=3k(k+1)^2$ \acp{dof}. Note that in the case of the lowest order elements, i.e., $k=1$, only \acp{dof} associated to edges appear. For higher order elements, i.e., $k\geq 2$, all kinds of \acp{dof} occur in both dimensions.
\begin{figure}[t!]
    \centering
    \begin{subfigure}[t]{0.3\textwidth}
        \includegraphics[width=\textwidth]{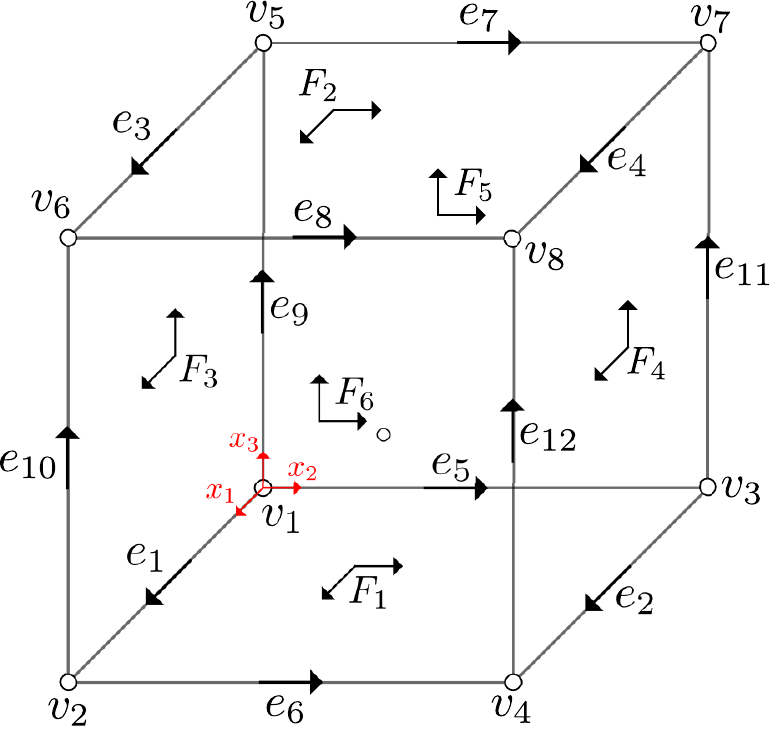}
        \caption{Oriented cell geometry. }
        \label{fig-3D_hex_vefs}
    \end{subfigure} \hspace{0.2cm}
    \begin{subfigure}[t]{0.3\textwidth}
        \includegraphics[width=\textwidth]{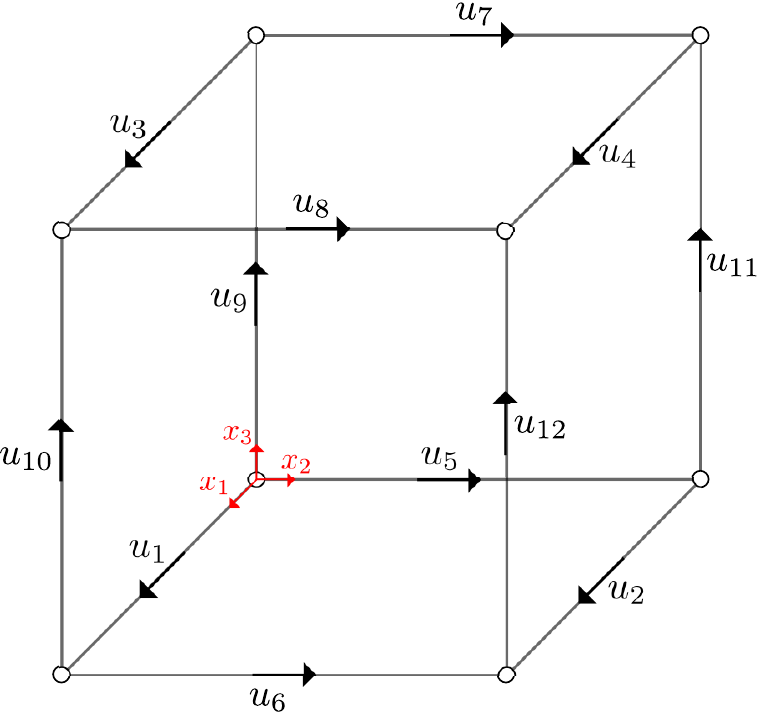}
        \caption{Lowest order \acp{dof}. }
        \label{fig-first_order_hex_dofs}
    \end{subfigure} \hspace{0.2cm}
    \begin{subfigure}[t]{0.3\textwidth}
        \includegraphics[width=\textwidth]{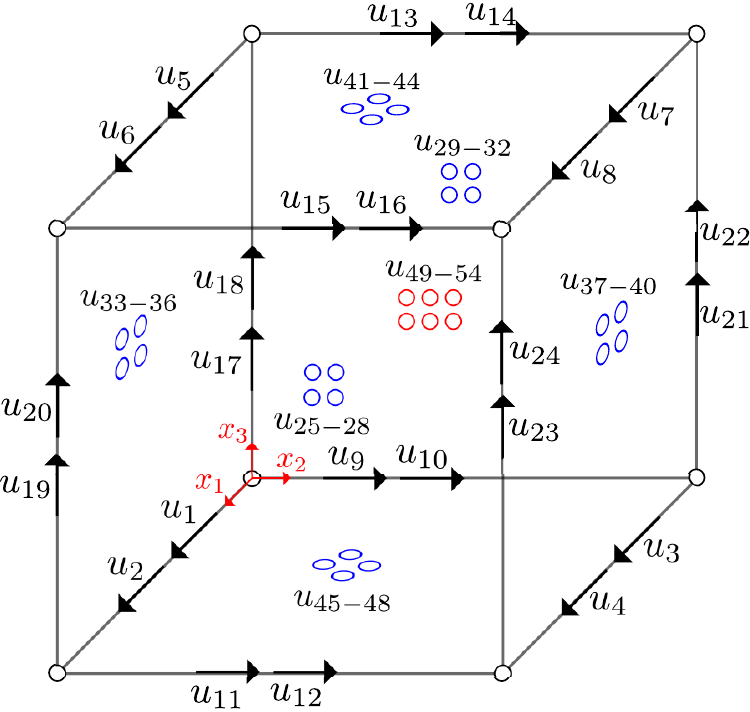}
        \caption{Second order \acp{dof}. Face \acp{dof} depicted in blue, inner \acp{dof} depicted in red. }
        \label{fig-second_order_hex_dofs}
    \end{subfigure}
    \caption{3D hexahedral {reference} \ac{fe}.}\label{fig-3D_hex}
\end{figure}

{\acp{dof} are labelled in the reference cell as follows. First, for every \ac{dof}, we can determine the geometrical entity that \emph{owns} it, e.g., an edge of the reference FE in \eqref{eq-hexnedge}. Within every geometrical entity, there is a one-to-one map between \acp{dof} and test functions. Thus, we can number \acp{dof} by the numbering of the test functions in the test space, e.g., the test functions in $\mathcal{P}_{k-1}(\hat{e})$ for the \acp{dof} in \eqref{eq-hexnedge}. We note that all the test spaces in the \ac{dof} definitions can be built using a nodal-based (Lagrangian) basis, and thus, every \ac{dof} in a geometrical object can be conceptually associated to one and only one node. The numbering of the nodes in the geometrical object is determined by its orientation in the reference FE. The composition of a geometrical object numbering (see Figs.~\ref{fig-3D_hex}--\ref{fig-3D_tet}) and the object-local node numbering provides the local numbering of \acp{dof} within the reference cell.

}

\subsubsection{Tetrahedra}
The set of functionals described in this section follows~\cite[Ch.\ 5]{monk_finite_2003}. In the 2D case, the set of moments for the {reference} \ac{fe} is defined as
\begin{subequations}
\begin{align}
\sigma_{\hat{e}}(\hat{\uu}_h):= & \int_{\hat{e}} ( \hat{\uu}_h \cdot \boldsymbol{\hat{\tau}} ) \hat{q}  \quad \forall \hat{q} \in \mathcal{P}_{k-1}(\hat{e}), \quad \forall \hat{e} \in \georef \label{eq-tetnedge_2D} \\
\sigma_\georef(\hat{\uu}_h):= &  \int_{\georef}  \hat{\uu}_h \cdot \boldsymbol{\hat{q}} \quad \forall \boldsymbol{\hat{q}} \in [\P_{k-2}(\georef)]^2.\label{eq-tetninner_2D}
\end{align}
\end{subequations}
Clearly, we have $3k$ edge \acp{dof} and $k(k-1)$ inner \acp{dof}, which lead to a total number of $n_{\hat{\Sigma}}=k(k+2)$ \acp{dof}. In three dimensions, the set $\hat{\Sigma}$ is defined as
\begin{subequations}\label{eq-tetmoms}
\begin{align}
\sigma_{\hat{e}}(\hat{\uu}_h):= & \int_{\hat{e}} ( \hat{\uu}_h\cdot \hat{\boldsymbol{\tau}} ) \hat{q}  \qquad \forall \hat{q} \in \P_{k-1}(\hat{e}), \quad \forall \hat{e} \in \georef \label{eq-tetnedge} \\
\sigma_{\hat{f}}(\hat{\uu}_h):= & \frac{1}{\| \mathcal{\hat{F}} \|} \int_{\mathcal{\hat{F}}} \hat{\uu}_h \cdot \hat{\boldsymbol{q}} \qquad \forall \hat{\boldsymbol{q}} \in [\P_{k-2}(\georef)]^3 \text{ s.t. } \hat{\boldsymbol{q}}\cdot \hat{\n}=0 , \quad \forall \hat{f} \in \georef \label{eq-tetneface} \\
\sigma_\georef(\hat{\uu}_h):= & \int_\georef  \hat{\uu}_h \cdot \hat{\boldsymbol{q}} \qquad \forall \hat{\boldsymbol{q}} \in [\P_{k-3}(\georef)]^3,\label{eq-tetninner}
\end{align}
\end{subequations}
where $\hat{\boldsymbol{n}}$ in \eq{eq-tetneface} is the unit normal to the face. The set of moments \eq{eq-tetmoms} contains $6k$ edge \acp{dof}, $4k(k-1)$ face \acp{dof} and $\frac{k(k-1)(k-2)}{2}$ inner \acp{dof}. Therefore, the tetrahedral edge element has a total number of $n_{\hat{\Sigma}}=\frac{k(k^2+5k+6)}{2}$ \acp{dof}. The local numbering of \acp{dof} is analogous as for the hexahedral case.

\begin{remark}
The face moments in the definition \eq{eq-tetneface} seem to differ from the rest of definitions in \eq{eq-tetmoms}, which are not scaled with a geometrical entity measure. We follow here~\cite[Ch.\ 5]{monk_finite_2003}, where this expression of face moments is used to prove affine equivalence, i.e., proving that \acp{dof} are affine invariant under the transformation from the {reference}  to the physical element.
\end{remark}

Note that in the case of the lowest order elements, i.e., $k=1$, only \acp{dof} associated to edges occur. For second order elements, i.e., $k=2$, we also find \acp{dof} related to faces (inner \acp{dof} in 2D), and it is in orders higher than two where all kinds of \acp{dof} occur. Note that vector-valued test functions $\hat{\boldsymbol{q}}$ involved in~(\ref{eq-tetninner_2D}), (\ref{eq-tetneface}) and (\ref{eq-tetninner}) can be understood as products between linearly independent vectors $\{ \boldsymbol{\hat{\tau}}_i \}_1^d$ that form a basis in the geometrical entity of dimension $d$ and scalar polynomials $\hat{q}\in \mathcal{P}_{k-d}$.

\begin{figure}[t!]
    \centering
    \begin{subfigure}[t]{0.32\textwidth}
        \includegraphics[width=\textwidth]{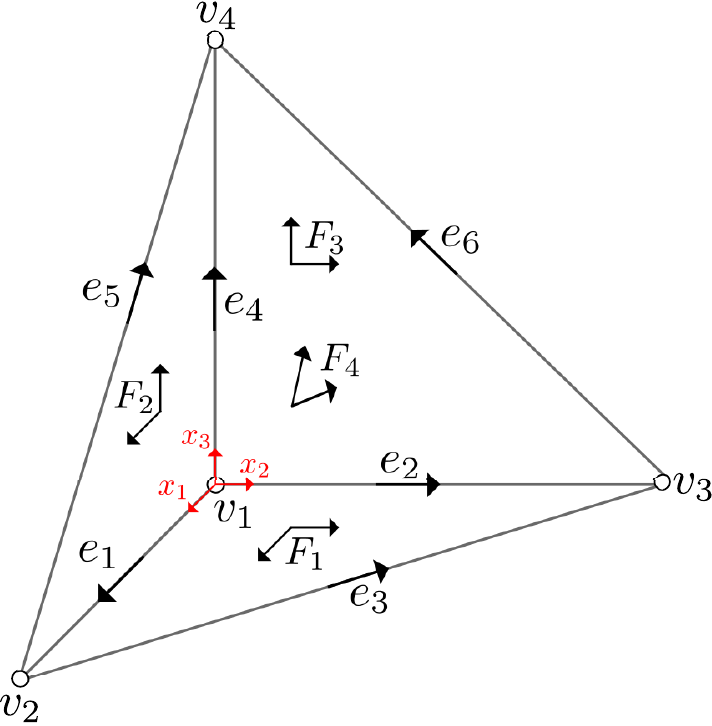}
        \caption{ Oriented cell geometry. }
        \label{fig-3D_tet_vefs}
    \end{subfigure} \hspace{0.2cm}
    \begin{subfigure}[t]{0.3\textwidth}
        \includegraphics[width=\textwidth]{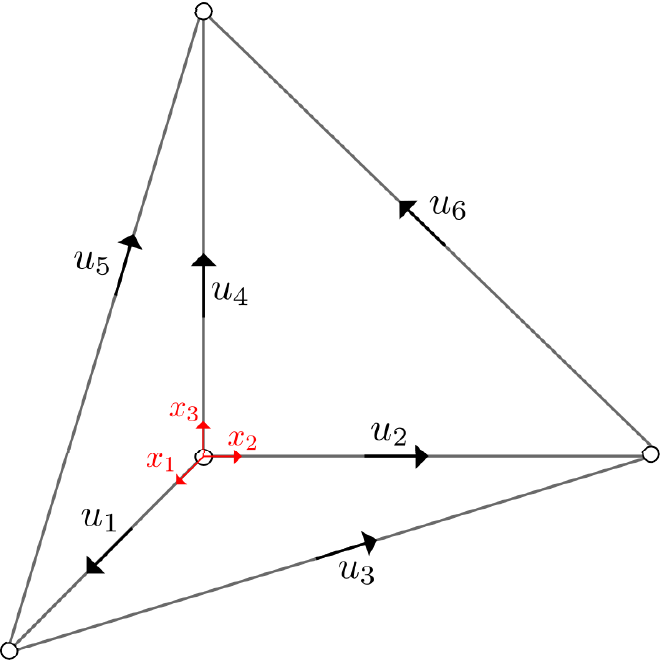}
        \caption{Lowest order \acp{dof}. }
        \label{fig-first_order_tet_dofs}
    \end{subfigure} \hspace{0.2cm}
    \begin{subfigure}[t]{0.3\textwidth}
        \includegraphics[width=\textwidth]{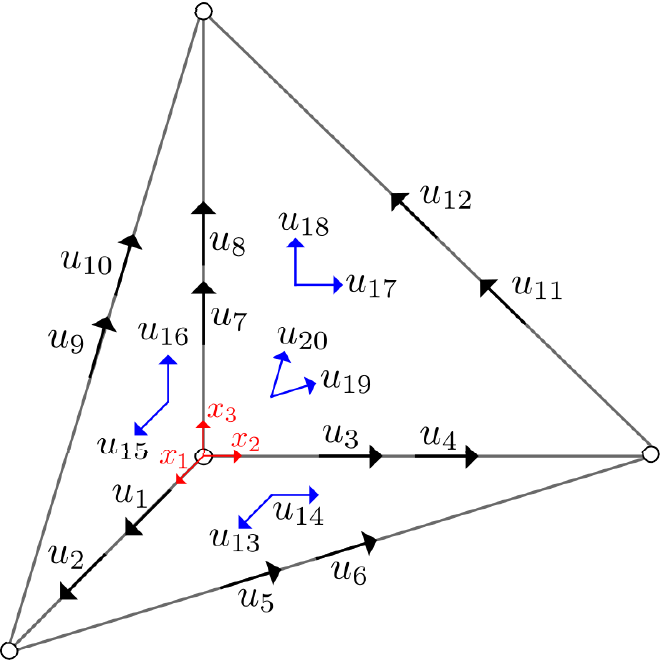}
        \caption{Second order \acp{dof}. Face \acp{dof} depicted in blue. }
        \label{fig-second_order_tet_dofs}
    \end{subfigure}
    \caption{3D tetrahedral {reference} \ac{fe}.}\label{fig-3D_tet}
\end{figure}

\subsection{Construction of edge shape functions}\label{subsec-ned_shapes}
Usually, low order edge elements are implemented via hard-coded expressions of their respective shape functions. However, such an approach is not suitable for  high order edge \acp{fe}, which involve complex analytical expressions of the shape functions. In this section, we provide an automatic generator of arbitrary order shape functions.

The definition of the moments for the edge \ac{fe} (in (\ref{eq-hexnedge_2D},\ref{eq-hexninner_2D}), (\ref{eq-hexnedge},\ref{eq-hexneface},\ref{eq-hexninner}) or (\ref{eq-tetnedge_2D},\ref{eq-tetninner_2D}), (\ref{eq-tetnedge}-\ref{eq-tetninner}) for hexahedra and tetrahedra, resp) requires the selection of functions spanning the requested polynomial space in each case.  First, we generate a \emph{pre-basis} $\{ \varphi^a \}_{a=1}^{n_\Sigma}$ that spans the local \ac{fe} space $\esp_k (\georef)$. To this end, we consider the tensor product of Lagrangian polynomial basis (see Sect. \ref{subsec-lag_polys}) for hexahedra or the combination of a Lagrangian basis of one order less in \eqref{eq-ps_tet}  plus the based of monomials in \eqref{eq-basis_Sk} for tetrahedra. Our goal is to build another (canonical) basis $\{ \phi^a \}_{a=1}^{n_\Sigma}$ that spans the same space and additionally satisfies $\sigma_a(\phi^b) = \delta_{ab}$ for $a,b\in\{1,\ldots,n_\Sigma\}$, i.e., the basis of shape functions for the edge element. Thus, we are interested in obtaining a linear combination of the functions of the \emph{pre-basis} such that the duality between moments and functions is satisfied. As a result, an edge shape function $\phi^a$ can be written as $\phi^a = \sum_{b=1}^{n_\Sigma} Q_{ab} \varphi^b$. Let us make use of Einstein's notation to provide the definition of the change of basis between the two of them. We have:
\begin{eqnarray}
\phi^b = Q_{bc} \varphi^c,   \qquad
\sigma_a(\phi^b) = \sigma_a(Q_{bc} \varphi^c), \qquad
\delta_{ab} = \sigma_a(\varphi^c) Q_{bc},
\end{eqnarray}
{or in compact form $\I = \boldsymbol{C} \boldsymbol{\Q}^T$, thus $\boldsymbol{\Q}^{T}=\boldsymbol{C}^{-1}$. As a result, the edge shape functions are obtained as  $\phi^a = Q_{ab} \varphi^b = C^{-1}_{ba} \varphi^b$}. For the sake of illustration, we provide some examples of full sets of edge shape functions for different orders. To make the visualization easy, we provide only 2D examples for first and second order square (Figs. \ref{fig-NED1_HEX_} and \ref{fig-NED_hex_s2}) and triangular elements (Figs. \ref{fig-NED_s1} and \ref{fig-NED_s2}). In these figures, we put a circle on top of the node corresponding to the \ac{dof} dual to the shape function. The superscript indicates the local moment numbering on the particular geometrical entity. %The main insight out from these plots is the duality between shape functions and moments.

\begin{figure}[t!]
    \centering
    \begin{subfigure}[t]{0.24\textwidth}
        \includegraphics[trim={2.0cm 2.0cm 2.0cm 2.0cm},clip,width=\textwidth]{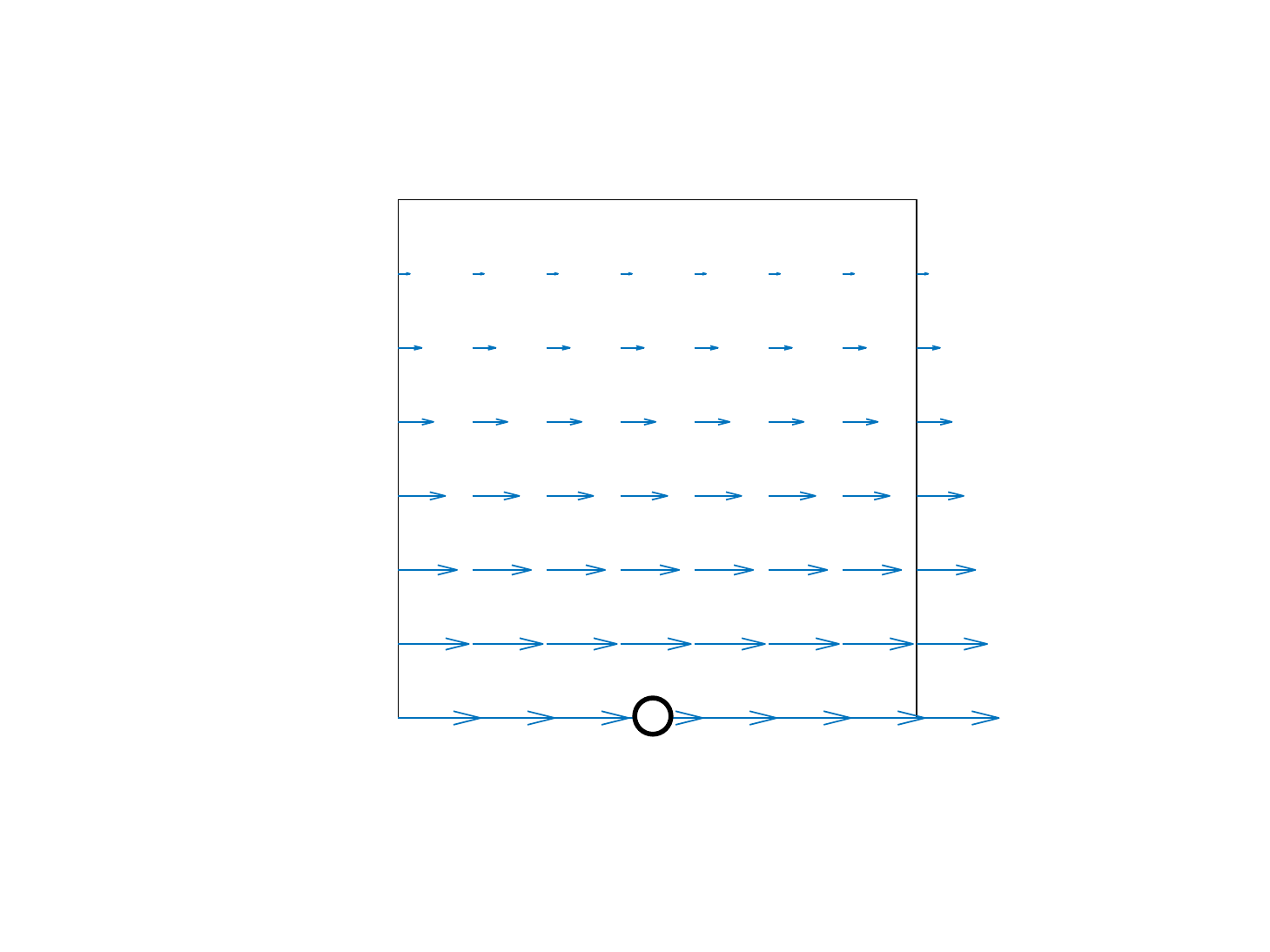}
        \caption{$\phi_{e_1}$ }
        \label{fig-nd1_hex_e1}
    \end{subfigure}
    \begin{subfigure}[t]{0.24\textwidth}
        \includegraphics[trim={2.0cm 2.0cm 2.0cm 2.0cm},clip,width=\textwidth]{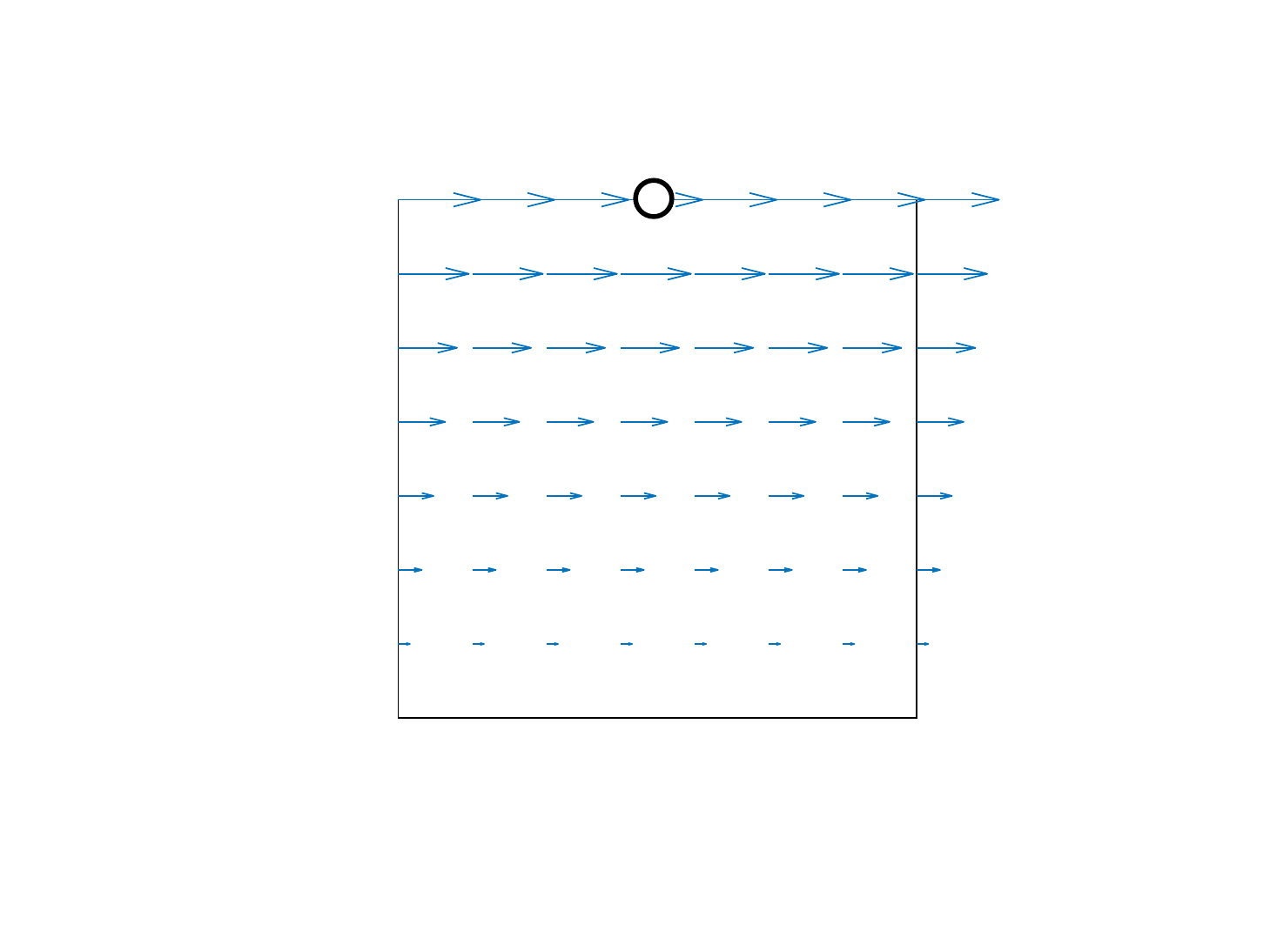}
        \caption{$\phi_{e_2}$ }
        \label{fig-nd1_hex_e2}
    \end{subfigure}
        \begin{subfigure}[t]{0.24\textwidth}
        \includegraphics[trim={2.0cm 2.0cm 2.0cm 2.0cm},clip,width=\textwidth]{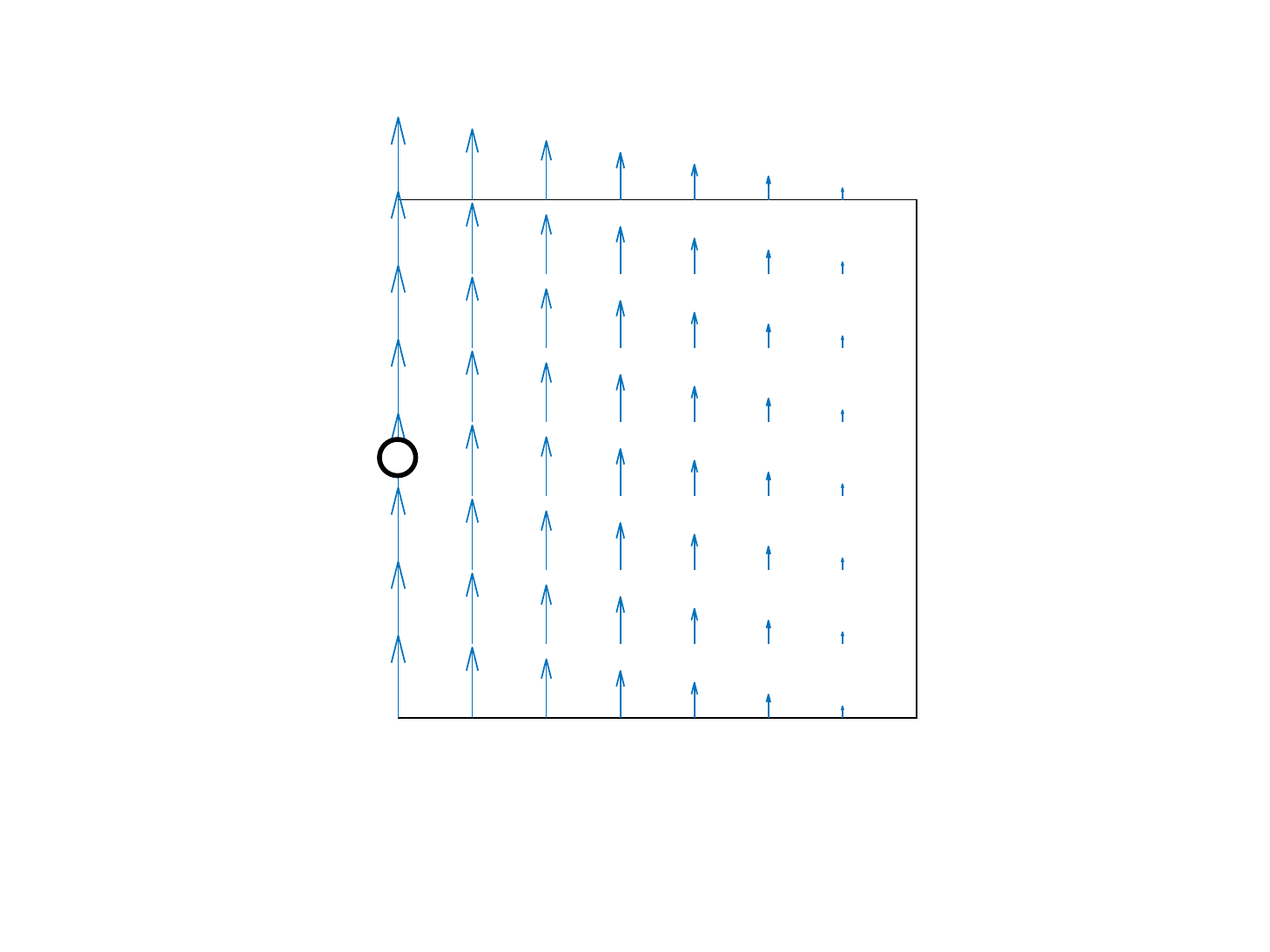}
        \caption{$\phi_{e_3}$}
        \label{fig-nd1_hex_e3}
    \end{subfigure}
        \begin{subfigure}[t]{0.24\textwidth}
        \includegraphics[trim={2.0cm 2.0cm 2.0cm 2.0cm},clip,width=\textwidth]{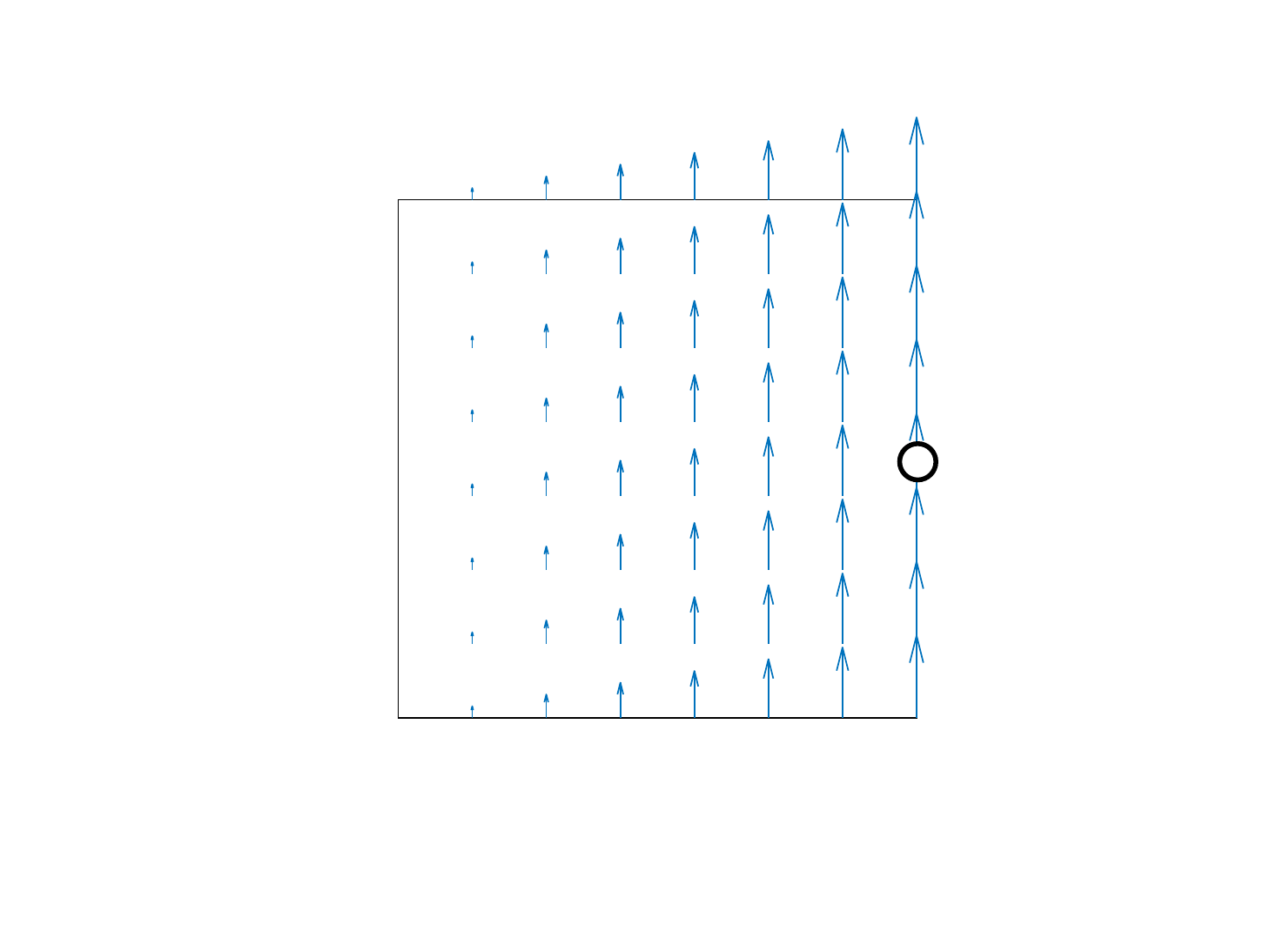}
        \caption{$\phi_{e_4}$ }
        \label{fig-nd1_hex_e4}
    \end{subfigure}
    \caption{Vector-field plots of the shape functions in the lowest order 2D hexahedra. The indices of edges follow the geometrical entities indices for the {reference} hexahedron in \fig{fig-3D_hex_vefs} restricted to the plane $z=0$. Auxiliary circles denote the geometrical entity where the moment is defined.}\label{fig-NED1_HEX_}
\end{figure}

\begin{figure}[t!]
    \centering
    \begin{subfigure}[t]{0.24\textwidth}
   \includegraphics[trim={0.80cm 0.80cm 0.80cm 0.80cm},clip,width=\textwidth]{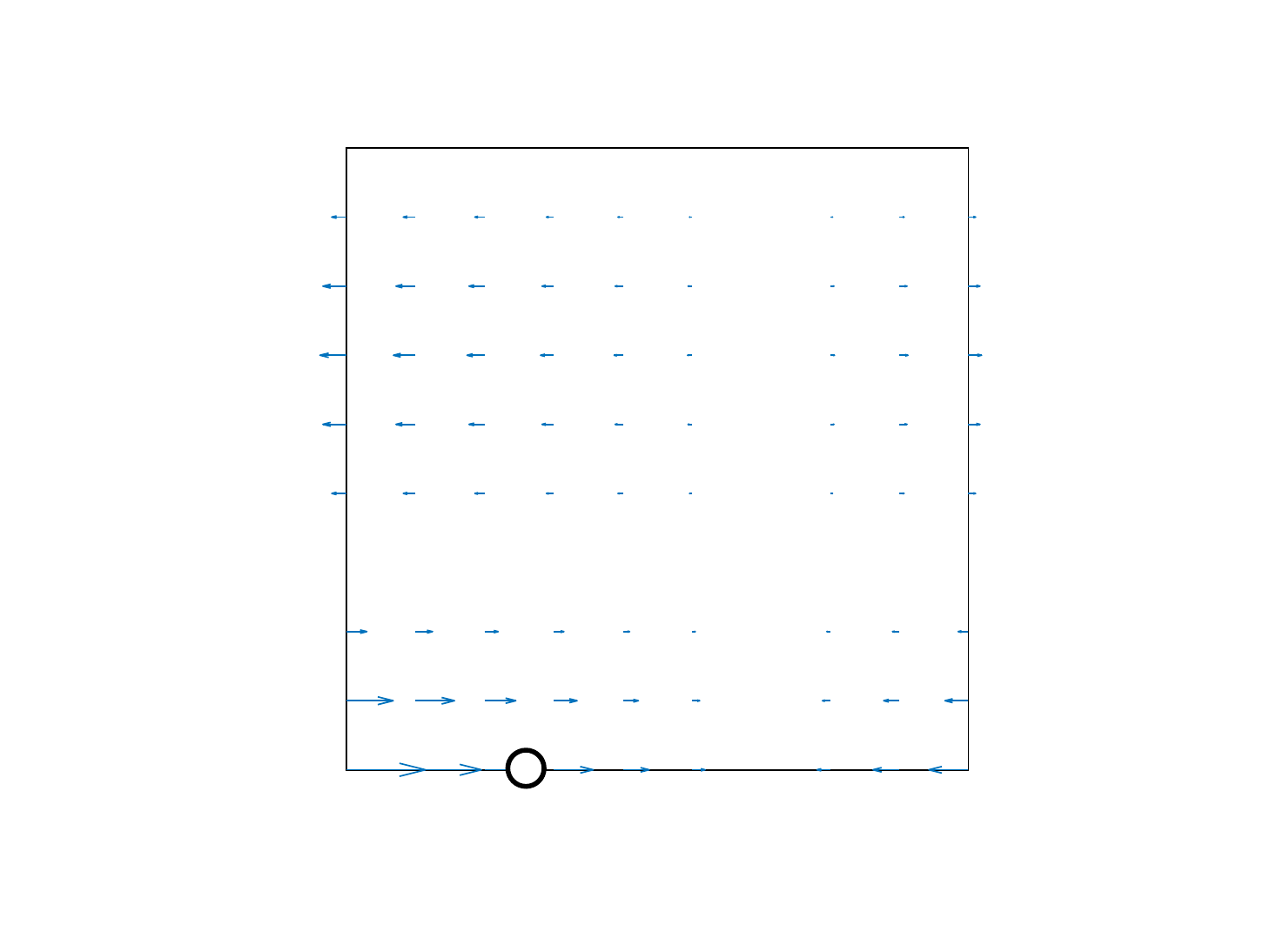}
        \caption{$\phi^1_{e_1}$ }
        \label{fig-nd2_hex_e1}
    \end{subfigure}
    \begin{subfigure}[t]{0.24\textwidth}
   \includegraphics[trim={0.80cm 0.80cm 0.80cm 0.80cm},clip,width=\textwidth]{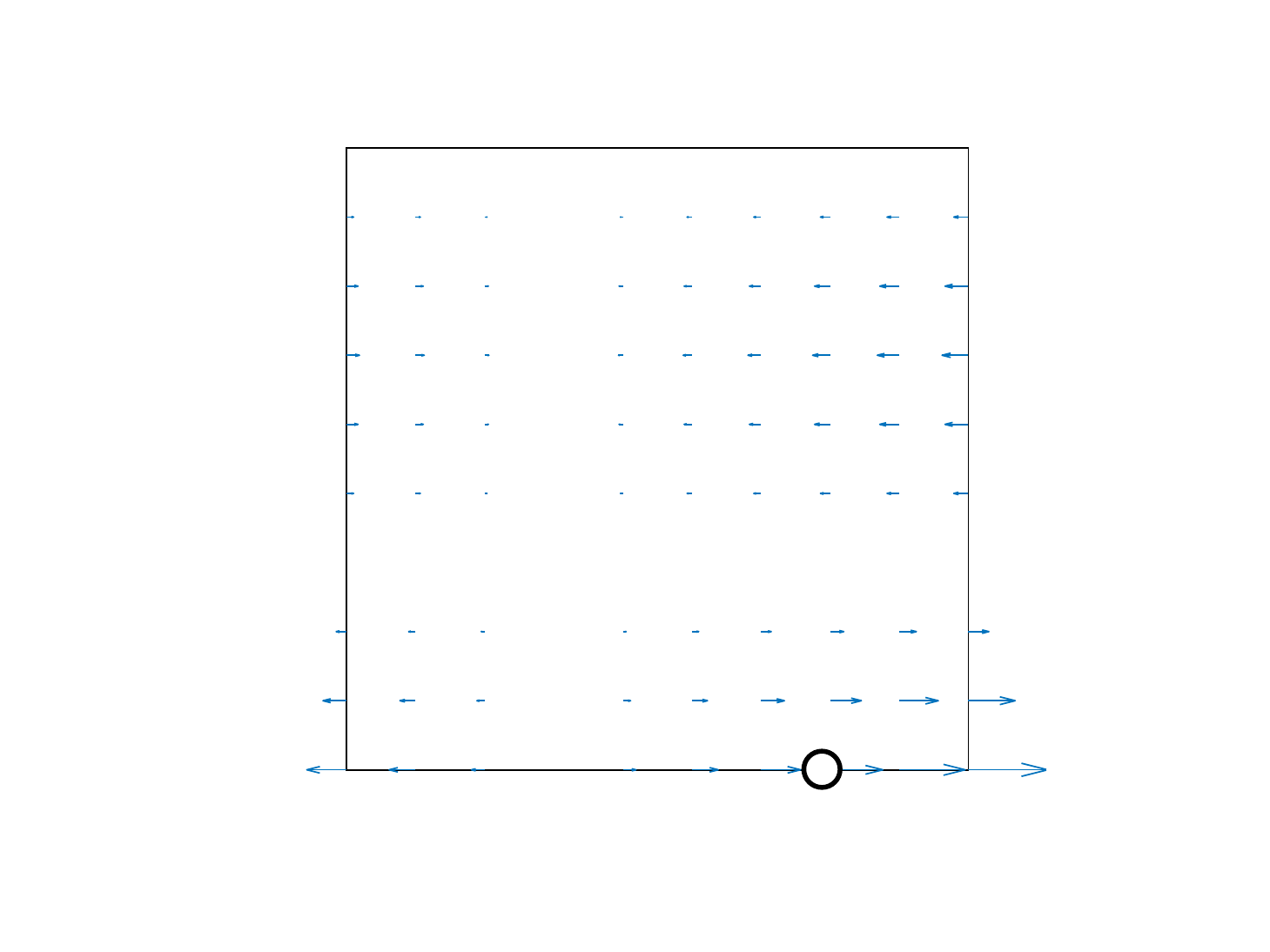}
        \caption{$\phi^2_{e_1}$ }
        \label{fig-nd2_hex_e2}
    \end{subfigure}
        \begin{subfigure}[t]{0.24\textwidth}
    \includegraphics[trim={0.80cm 0.80cm 0.80cm 0.80cm},clip,width=\textwidth]{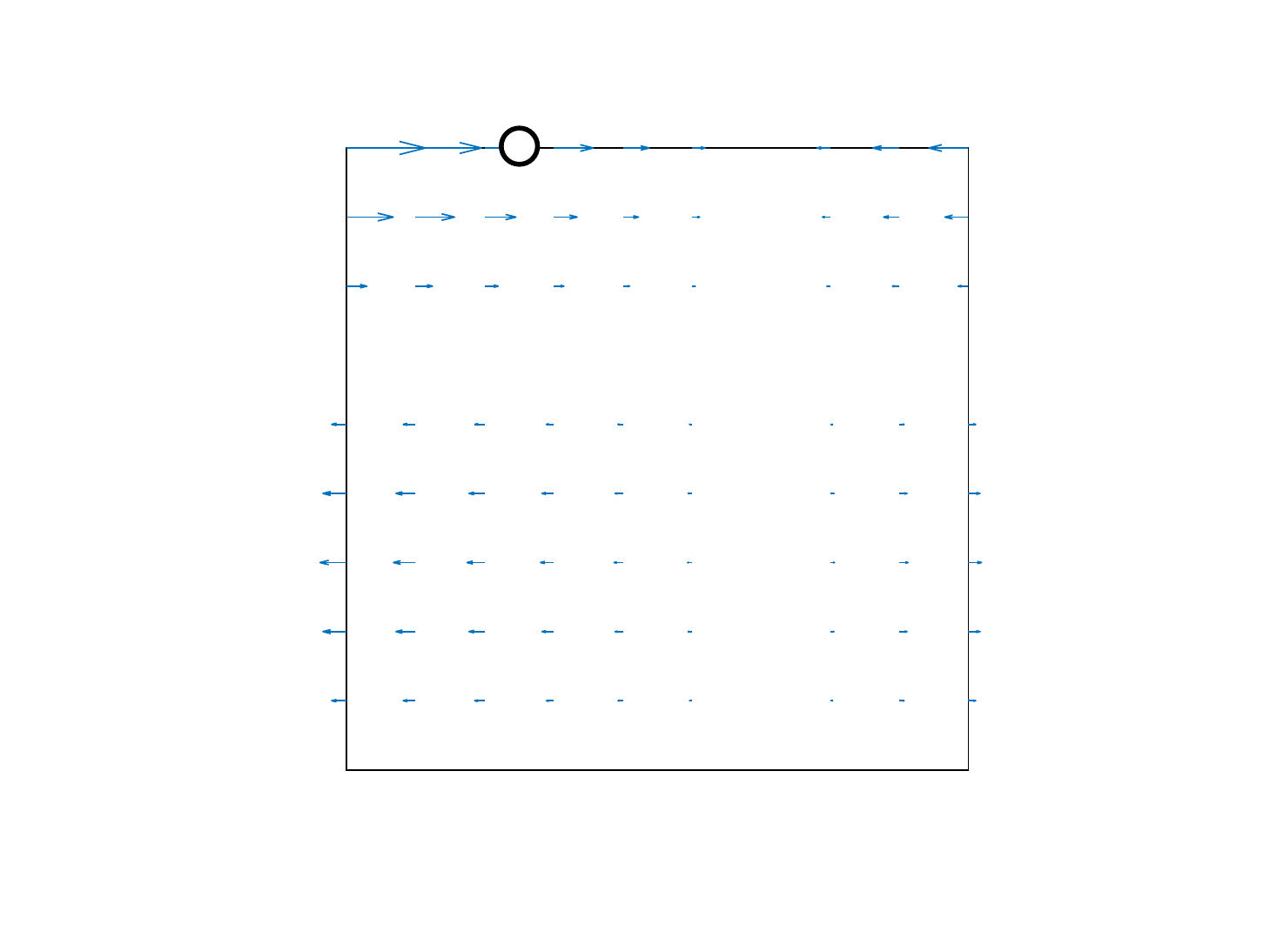}
        \caption{$\phi^1_{e_2}$}
        \label{fig-nd2_hex_e3}
    \end{subfigure}
            \begin{subfigure}[t]{0.24\textwidth}
      \includegraphics[trim={0.80cm 0.80cm 0.80cm 0.80cm},clip,width=\textwidth]{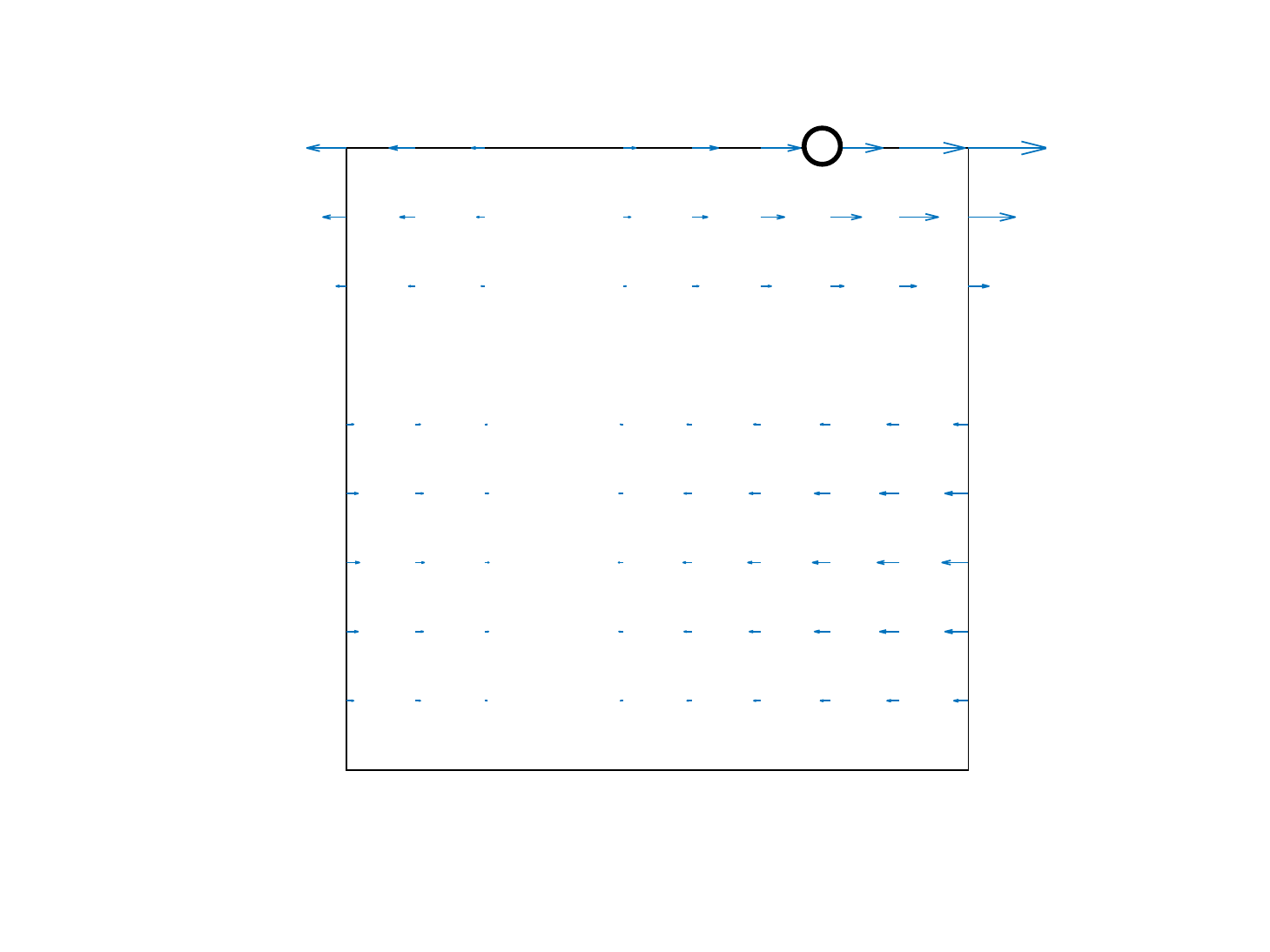}
        \caption{$\phi^1_{e_2}$}
        \label{fig-nd2_hex_e4}
    \end{subfigure}

         \begin{subfigure}[t]{0.24\textwidth}
    \includegraphics[trim={0.80cm 0.80cm 0.80cm 0.80cm},clip,width=\textwidth]{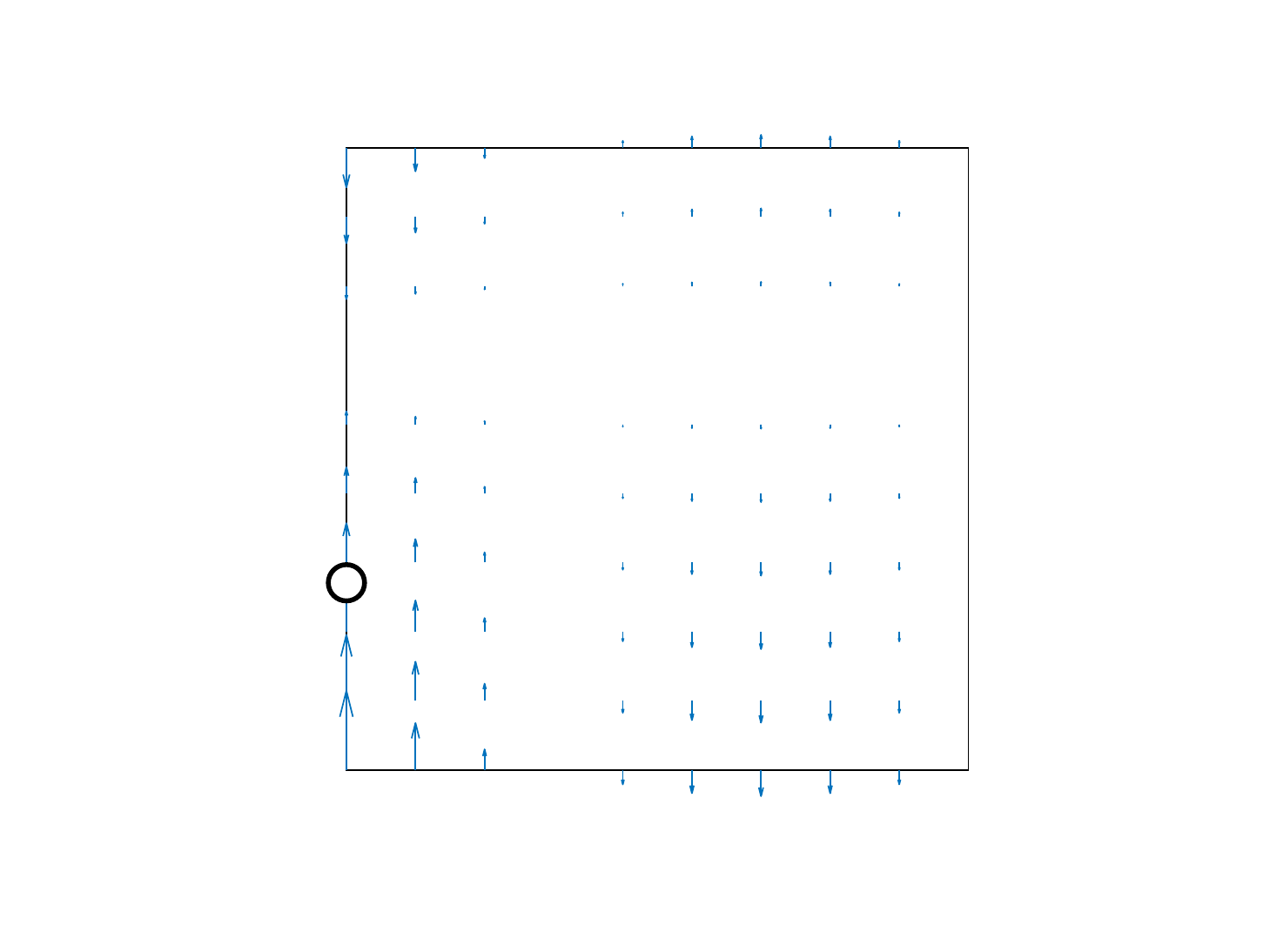}
        \caption{$\phi^1_{e_3}$ }
        \label{fig-nd2_hex_e5}
    \end{subfigure}
    \begin{subfigure}[t]{0.24\textwidth}
       \includegraphics[trim={0.80cm 0.80cm 0.80cm 0.80cm},clip,width=\textwidth]{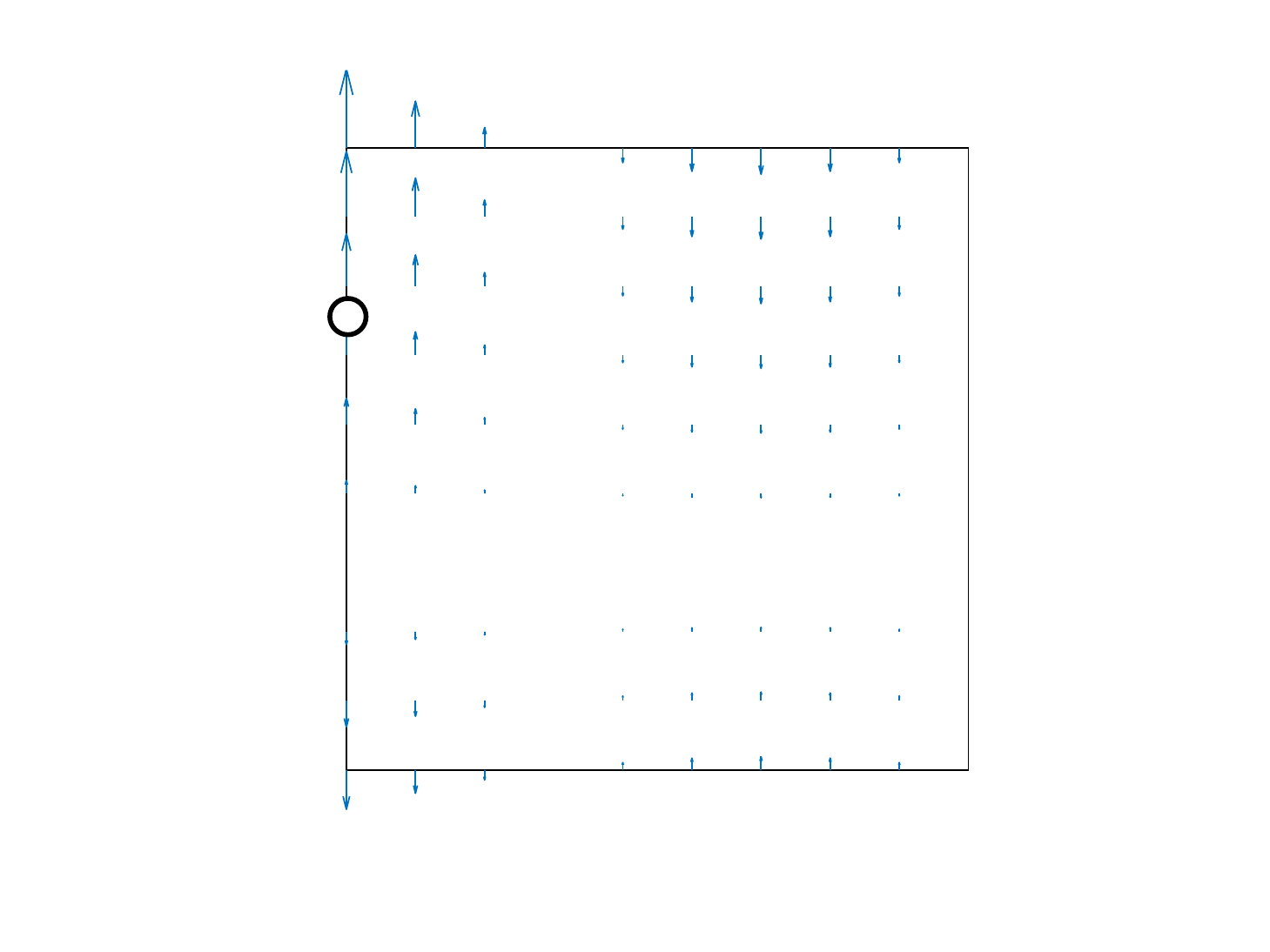}
        \caption{$\phi^2_{e_3}$ }
        \label{fig-nd2_hex_e6}
    \end{subfigure}
     \begin{subfigure}[t]{0.24\textwidth}
    \includegraphics[trim={0.80cm 0.80cm 0.80cm 0.80cm},clip,width=\textwidth]{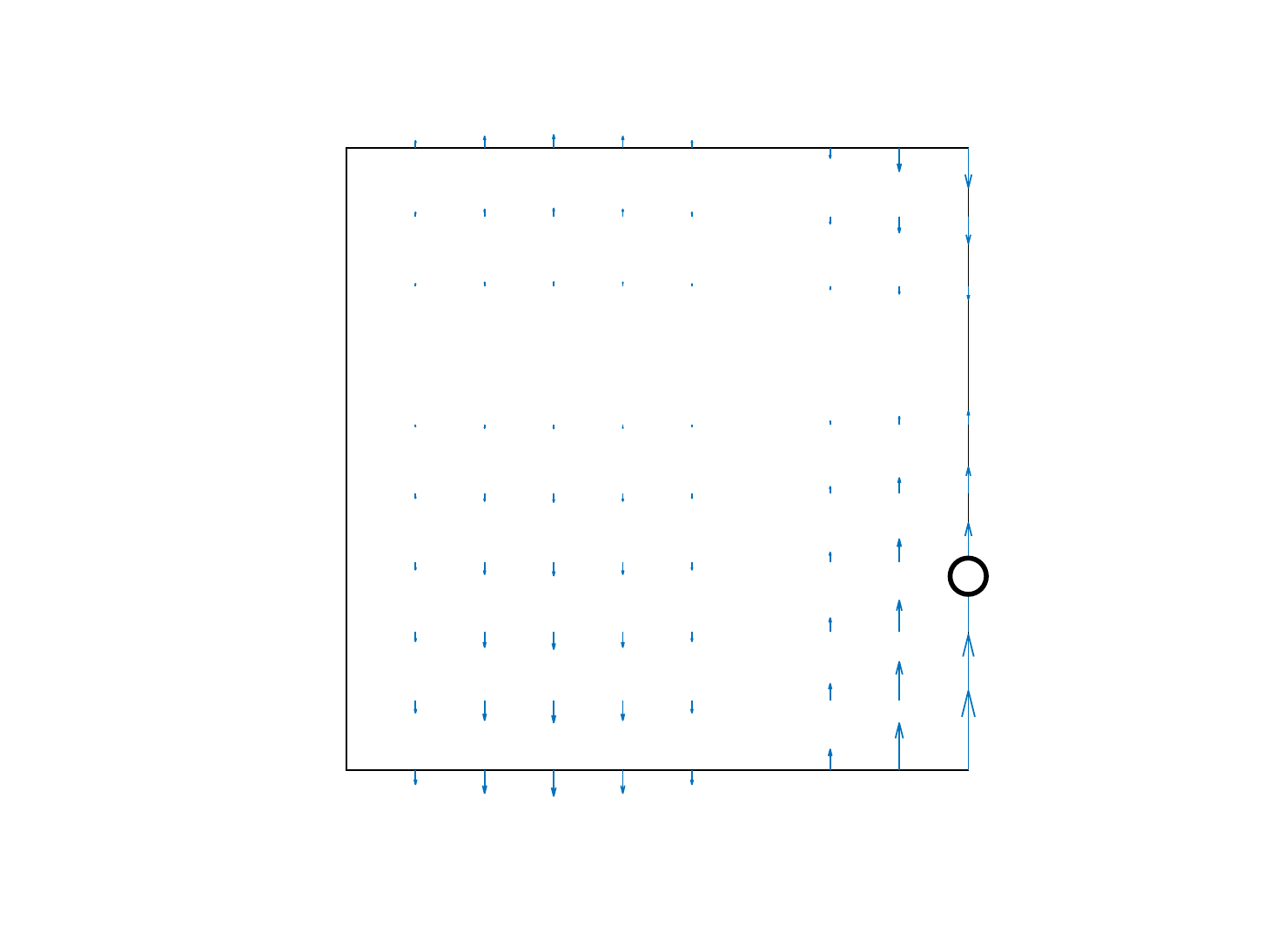}
        \caption{$\phi^1_{e_4}$ }
        \label{fig-nd2_hex_e7}
    \end{subfigure}
    \begin{subfigure}[t]{0.24\textwidth}
       \includegraphics[trim={0.80cm 0.80cm 0.80cm 0.80cm},clip,width=\textwidth]{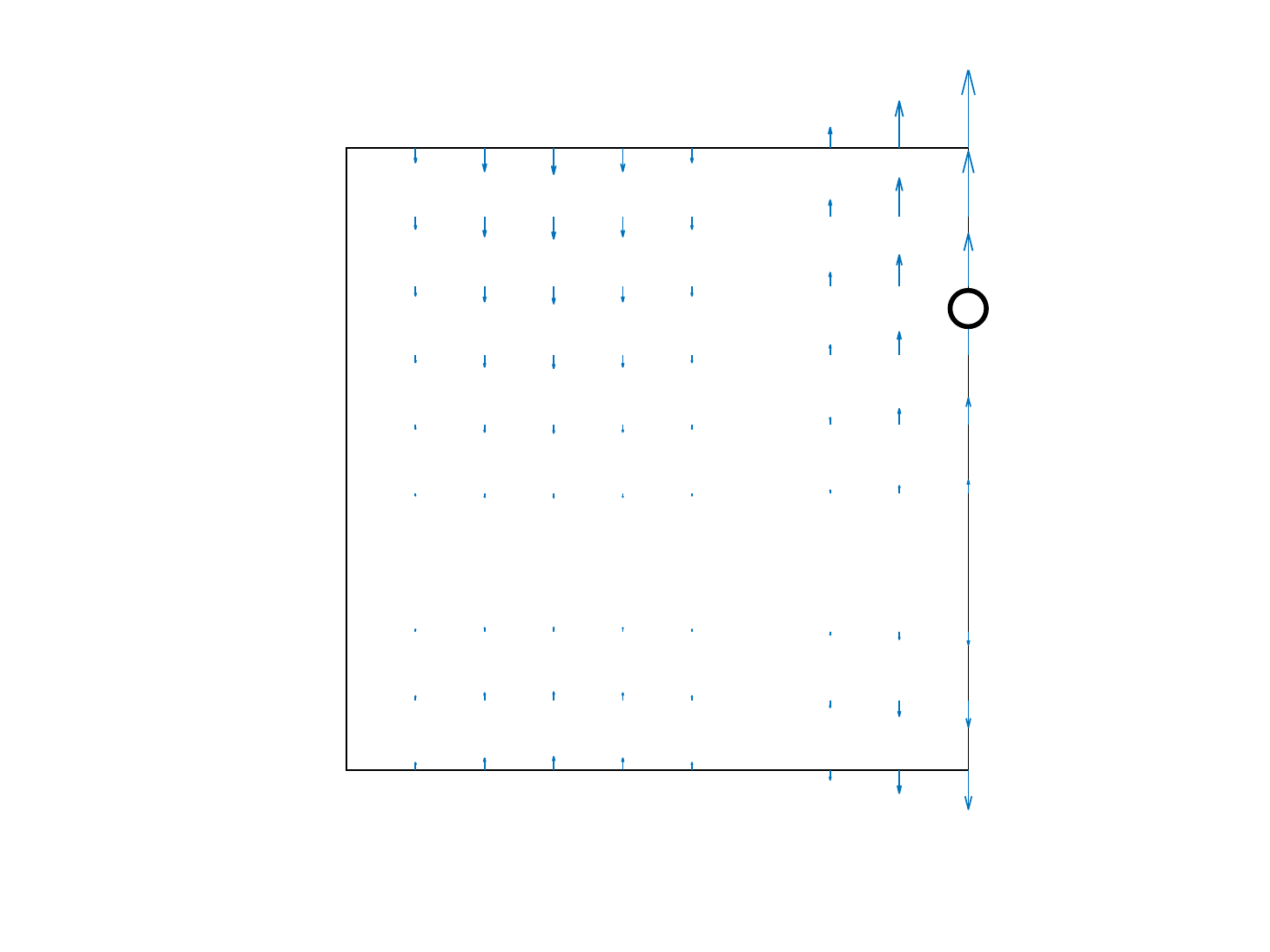}
        \caption{$\phi^2_{e_4}$ }
        \label{fig-nd2_hex_e8}
    \end{subfigure}

        \begin{subfigure}[t]{0.24\textwidth}
        \includegraphics[trim={0.80cm 0.80cm 0.80cm 0.80cm},clip,width=\textwidth]{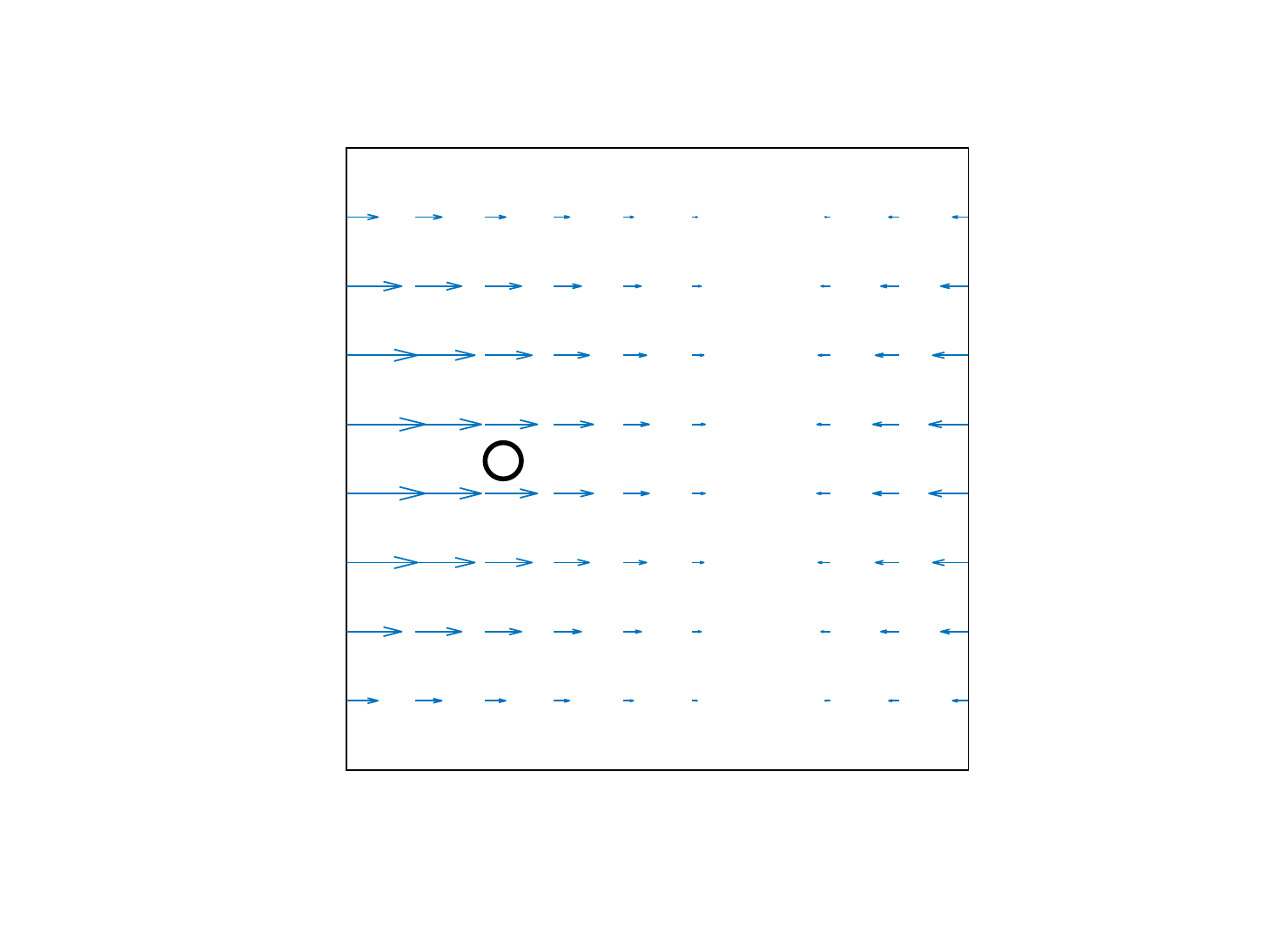}
        \caption{$\phi^1_{K}$}
        \label{fig-nd2_hex_f1}
    \end{subfigure}
            \begin{subfigure}[t]{0.24\textwidth}
        \includegraphics[trim={0.80cm 0.80cm 0.80cm 0.80cm},clip,width=\textwidth]{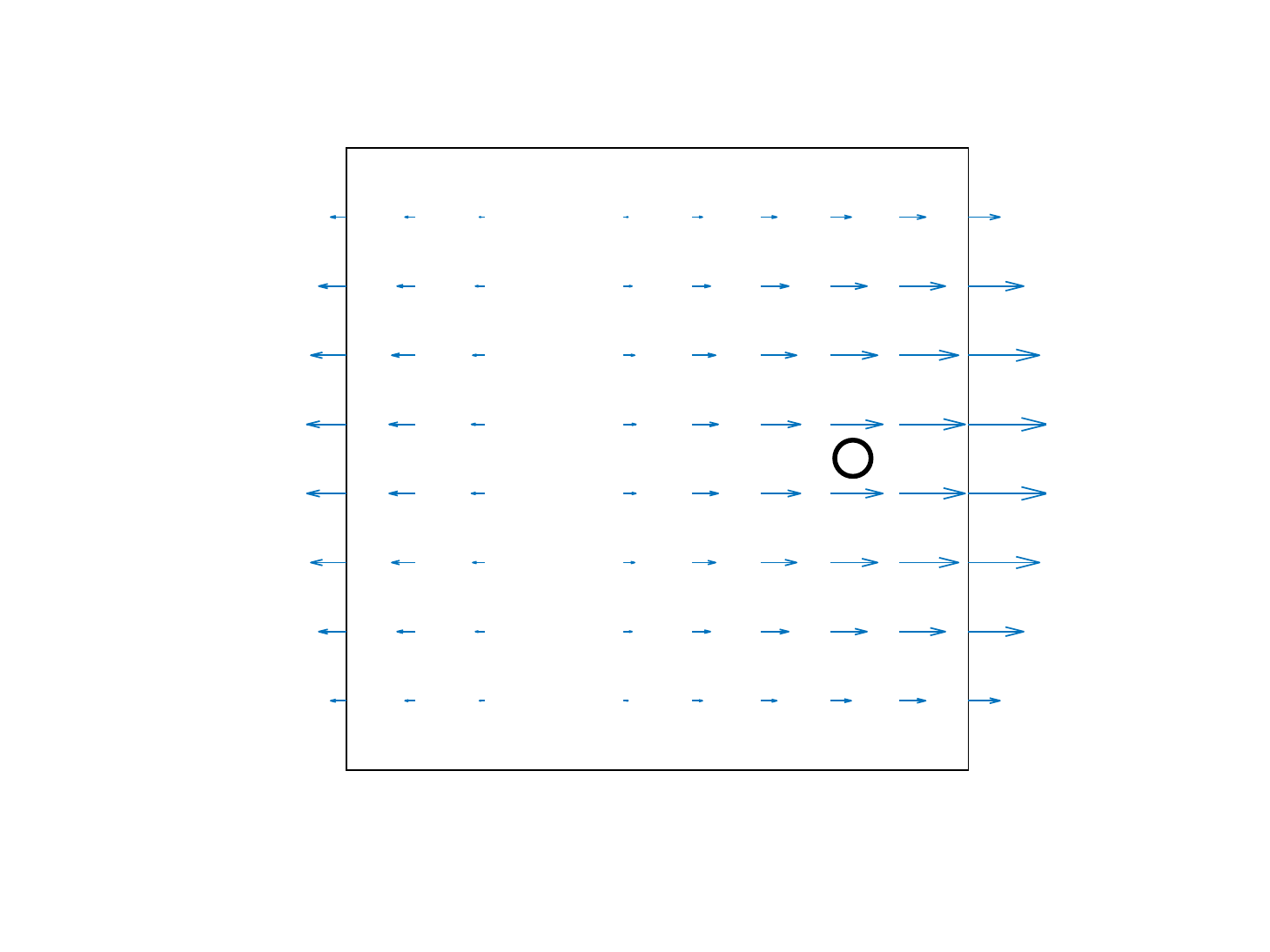}
        \caption{$\phi^2_{K}$}
        \label{fig-nd2_hex_f2}
    \end{subfigure}
     \begin{subfigure}[t]{0.24\textwidth}
        \includegraphics[trim={0.80cm 0.80cm 0.80cm 0.80cm},clip,width=\textwidth]{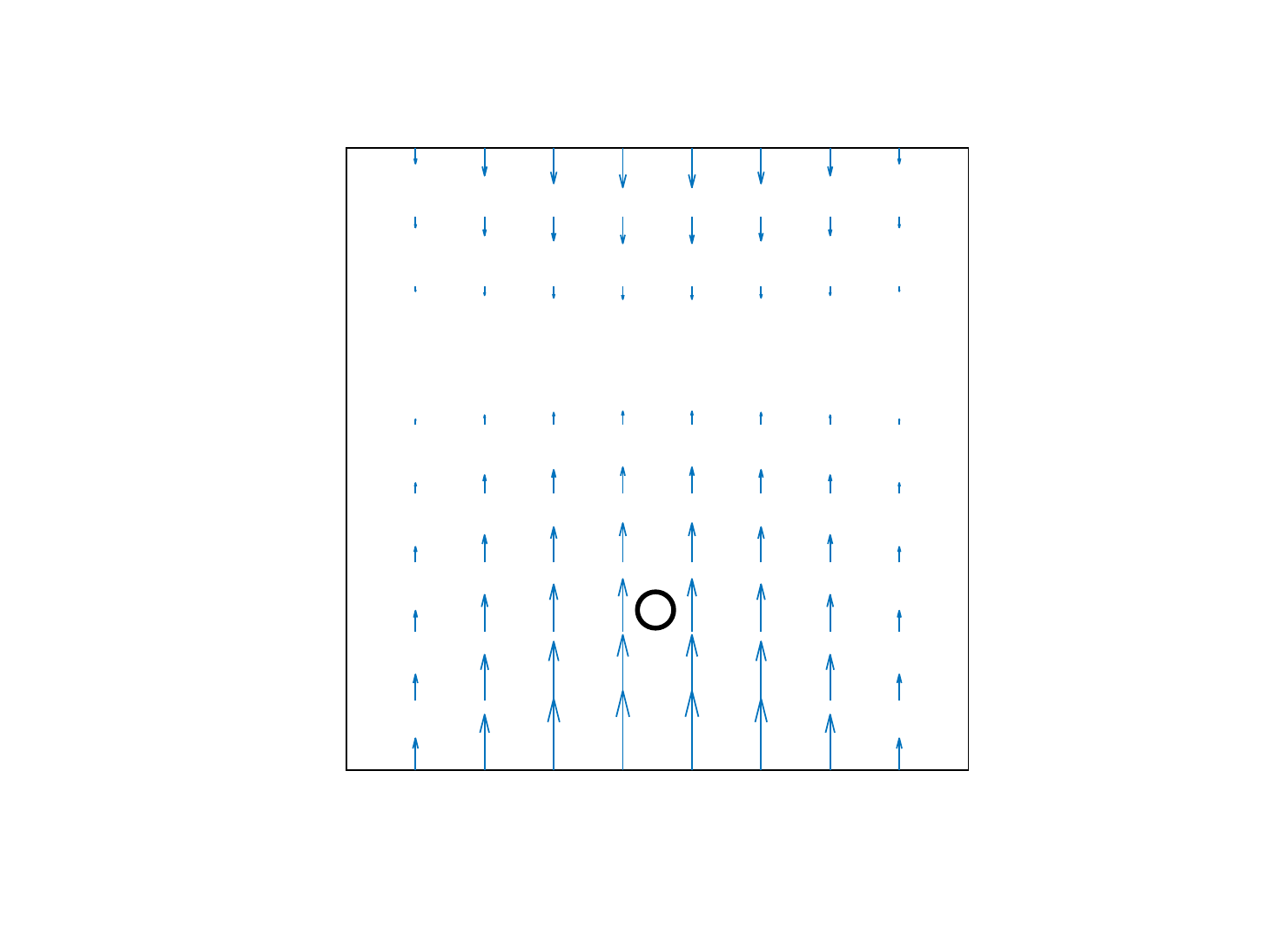}
        \caption{$\phi^3_{K}$}
        \label{fig-nd2_hex_f3}
    \end{subfigure}
     \begin{subfigure}[t]{0.24\textwidth}
        \includegraphics[trim={0.80cm 0.80cm 0.80cm 0.80cm},clip,width=\textwidth]{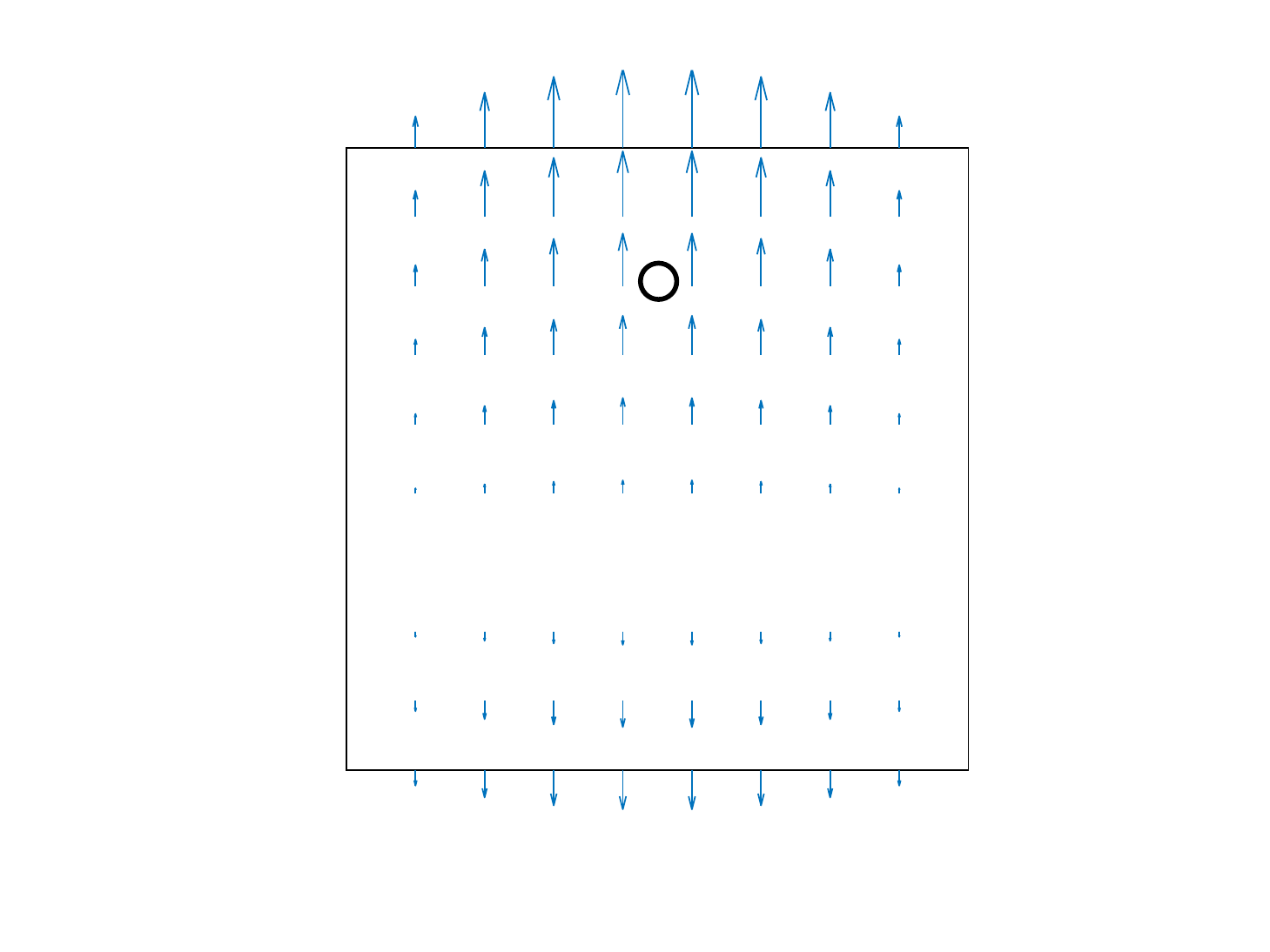}
        \caption{$\phi^4_{K}$}
        \label{fig-nd2_hex_f4}
    \end{subfigure}
    \caption{Vector-field plots of the shape functions in the second order hexahedral edge element. The indices of edges follow  the geometrical entities indices for the {reference} hexahedron in \fig{fig-3D_hex_vefs} restricted to the plane $z=0$. Auxiliary circles denote the geometrical entity where the moment is defined.}\label{fig-NED_hex_s2}
\end{figure}

\begin{figure}[t!]
    \centering
    \begin{subfigure}[t]{0.32\textwidth}
        \includegraphics[trim={1cm 1cm 1cm 1cm},clip,width=\textwidth]{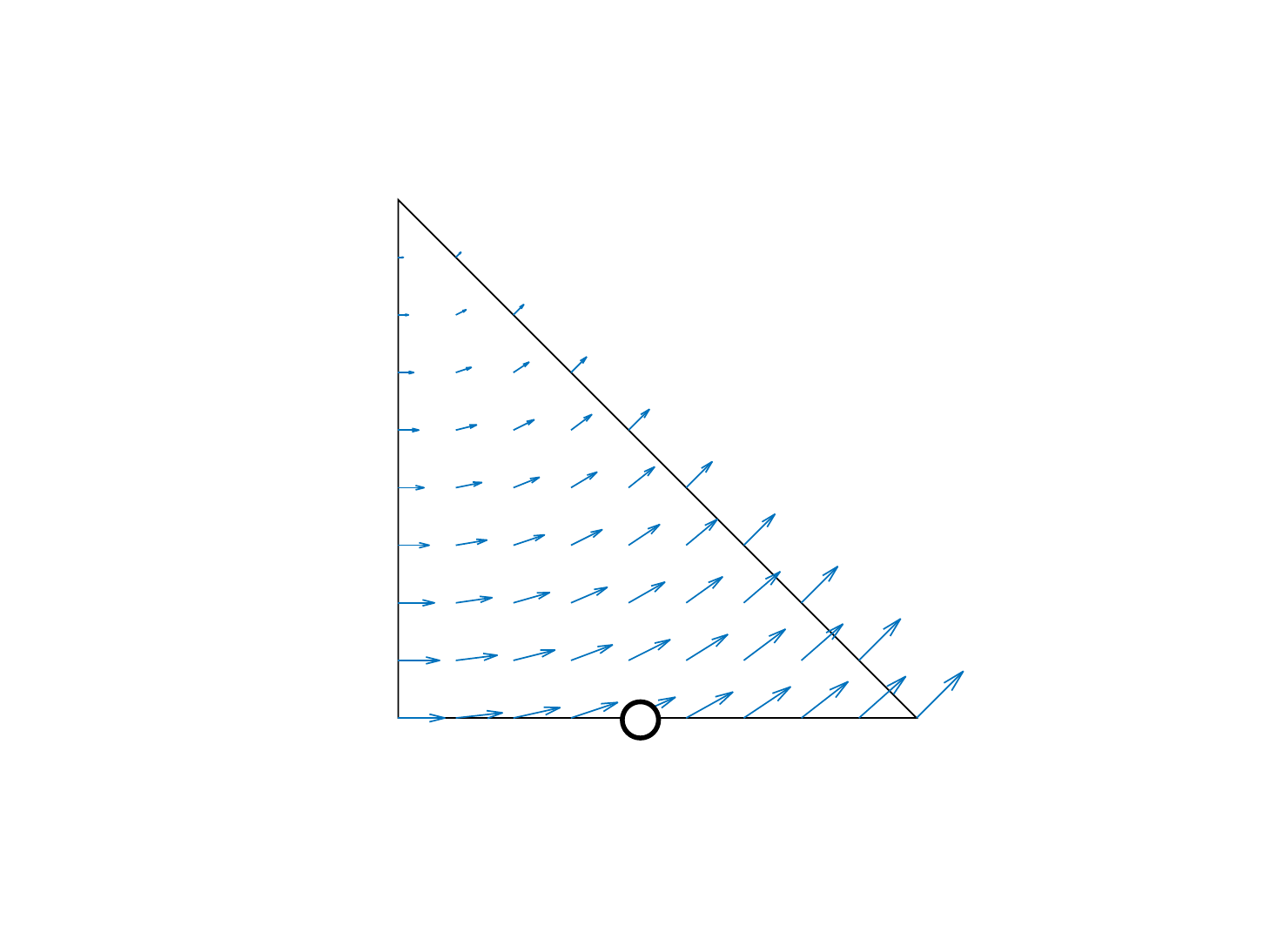}
        \caption{$\phi_{e_1}$ }
        \label{fig-nd1_tet_e1}
    \end{subfigure}
    \begin{subfigure}[t]{0.32\textwidth}
        \includegraphics[trim={1cm 1cm 1cm 1cm},clip,width=\textwidth]{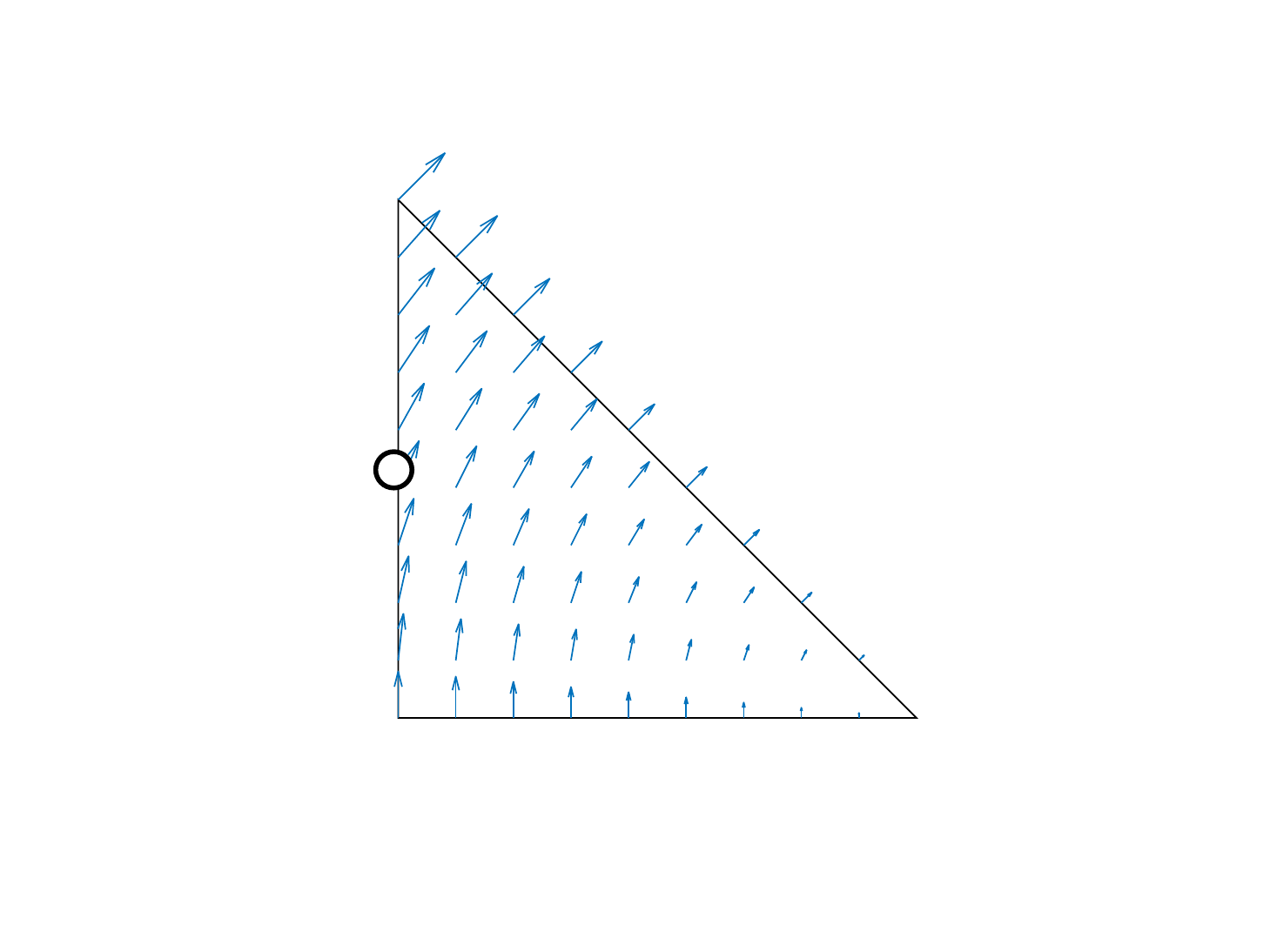}
        \caption{$\phi_{e_2}$ }
        \label{fig-nd1_tet_e2}
    \end{subfigure}
        \begin{subfigure}[t]{0.32\textwidth}
        \includegraphics[trim={1cm 1cm 1cm 1cm},clip,width=\textwidth]{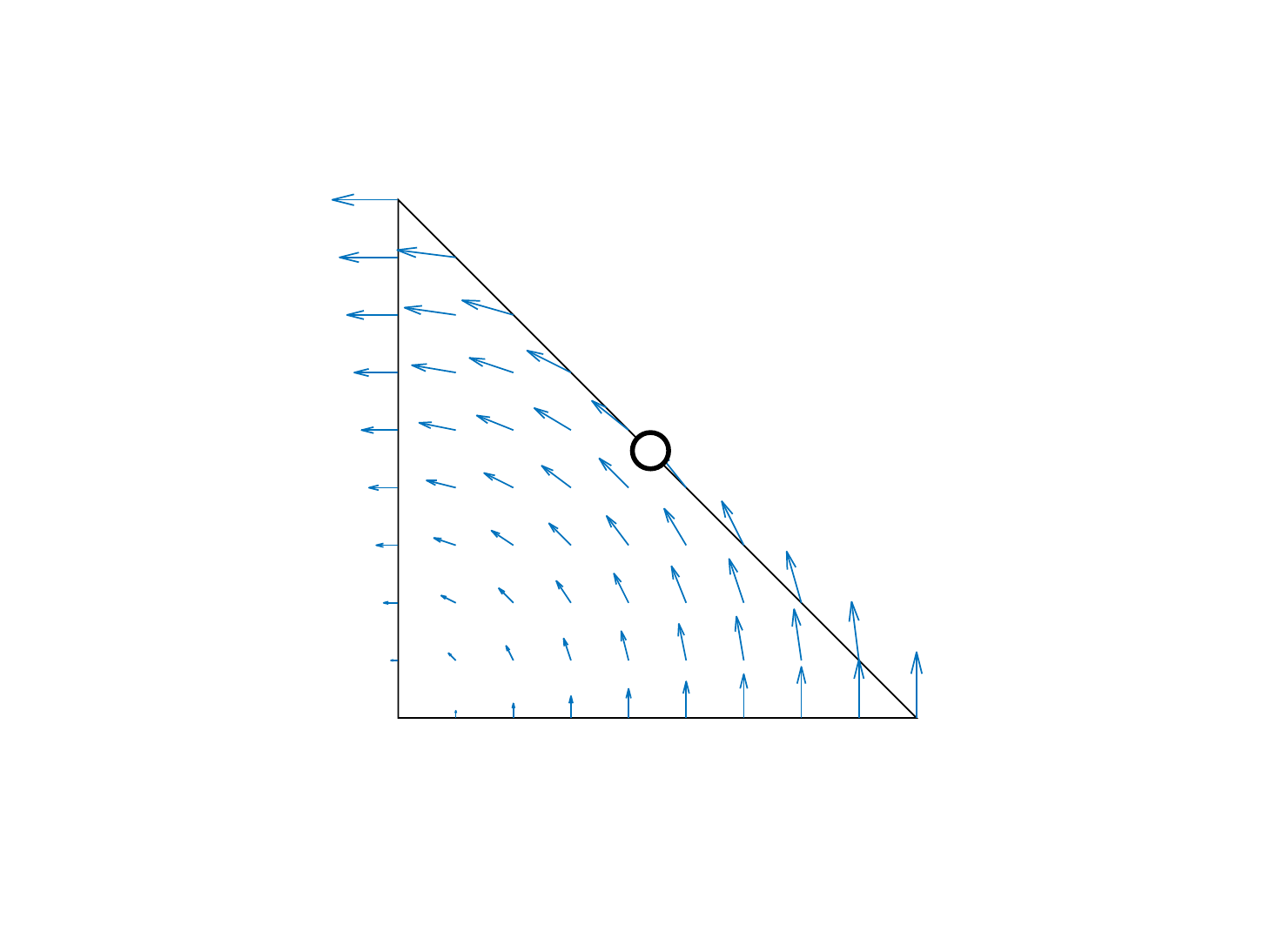}
        \caption{$\phi_{e_3}$}
        \label{fig-nd1_tet_e3}
    \end{subfigure}
    \caption{Vector-field plots of the shape functions in the lowest order tetrahedral element. The indices of edges follow  the geometrical entities indices for the {reference} tetrahedron in \fig{fig-3D_tet_vefs} restricted to the plane $z=0$. Auxiliary circles denote the geometrical entity where the moment is defined.}\label{fig-NED_s1}
\end{figure}

\begin{figure}[t!]
    \centering
    \begin{subfigure}[t]{0.24\textwidth}
   \includegraphics[trim={\crop_s \crop_s \crop_s \crop_s},clip,width=\textwidth]{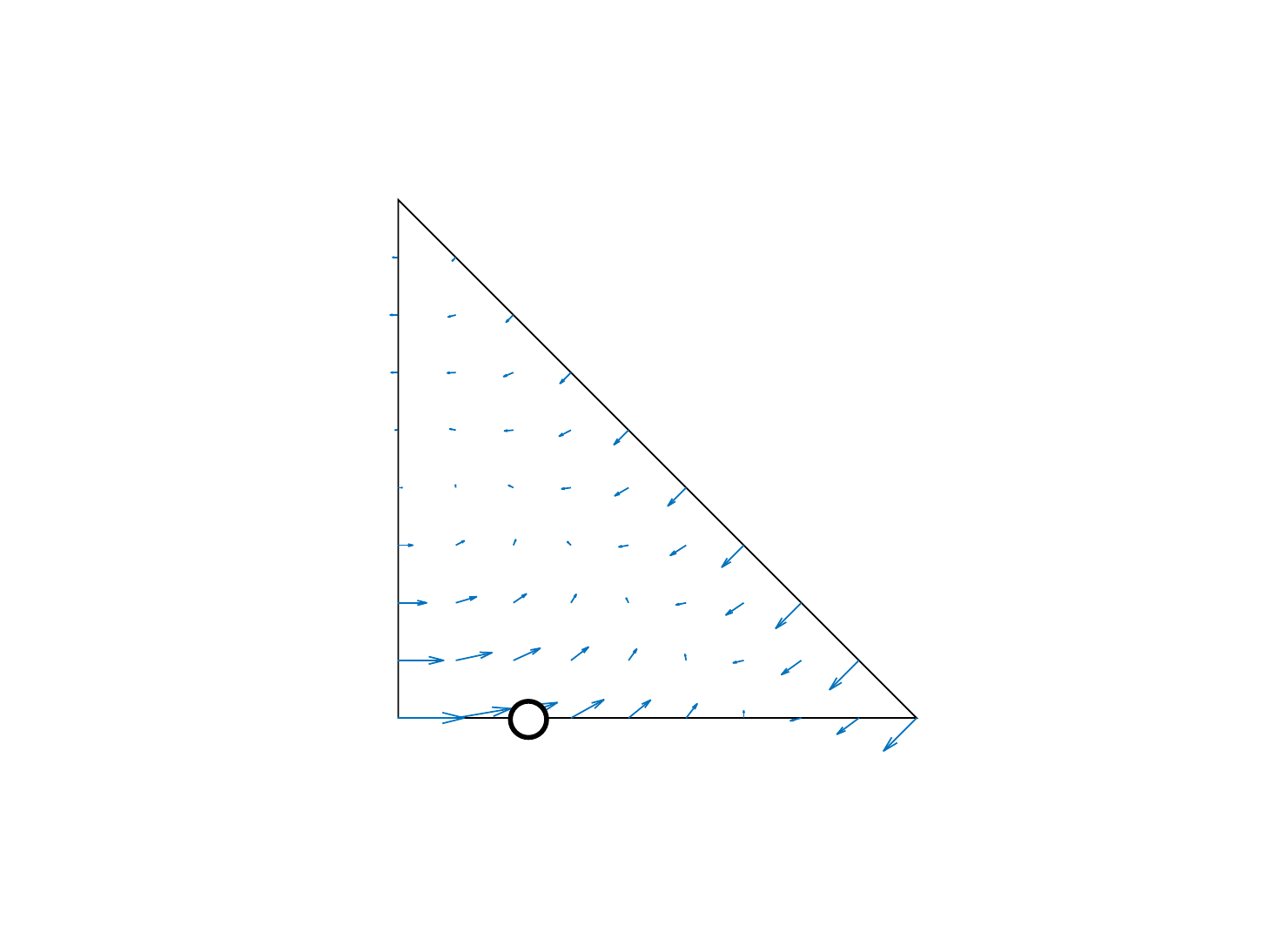}
        \caption{$\phi^1_{e_1}$ }
        \label{fig-nd2_tet_e1}
    \end{subfigure}
    \begin{subfigure}[t]{0.24\textwidth}
   \includegraphics[trim={\crop_s \crop_s \crop_s \crop_s},clip,width=\textwidth]{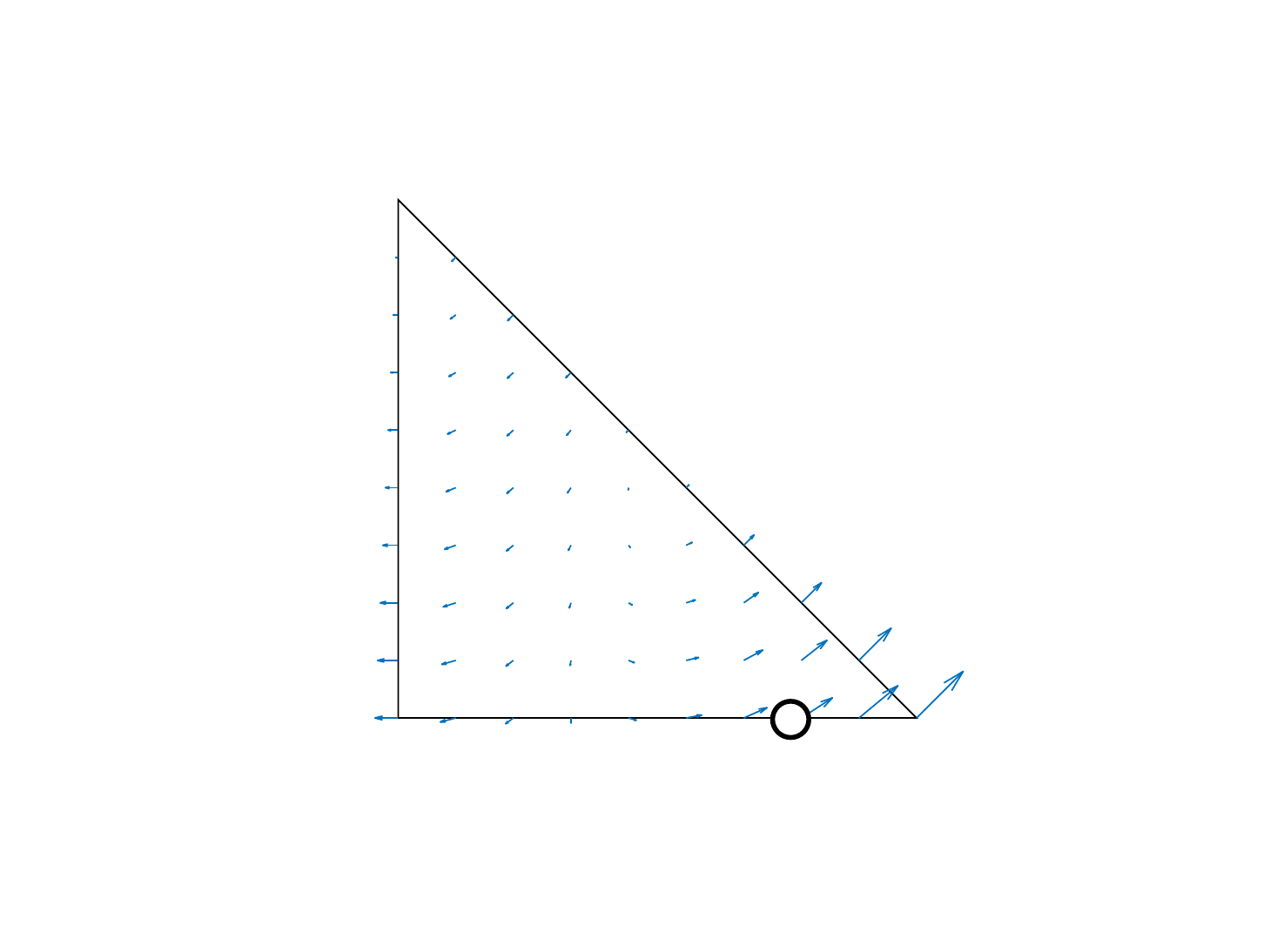}
        \caption{$\phi^2_{e_1}$ }
        \label{fig-nd2_tet_e2}
    \end{subfigure}
        \begin{subfigure}[t]{0.24\textwidth}
    \includegraphics[trim={\crop_s \crop_s \crop_s \crop_s},clip,width=\textwidth]{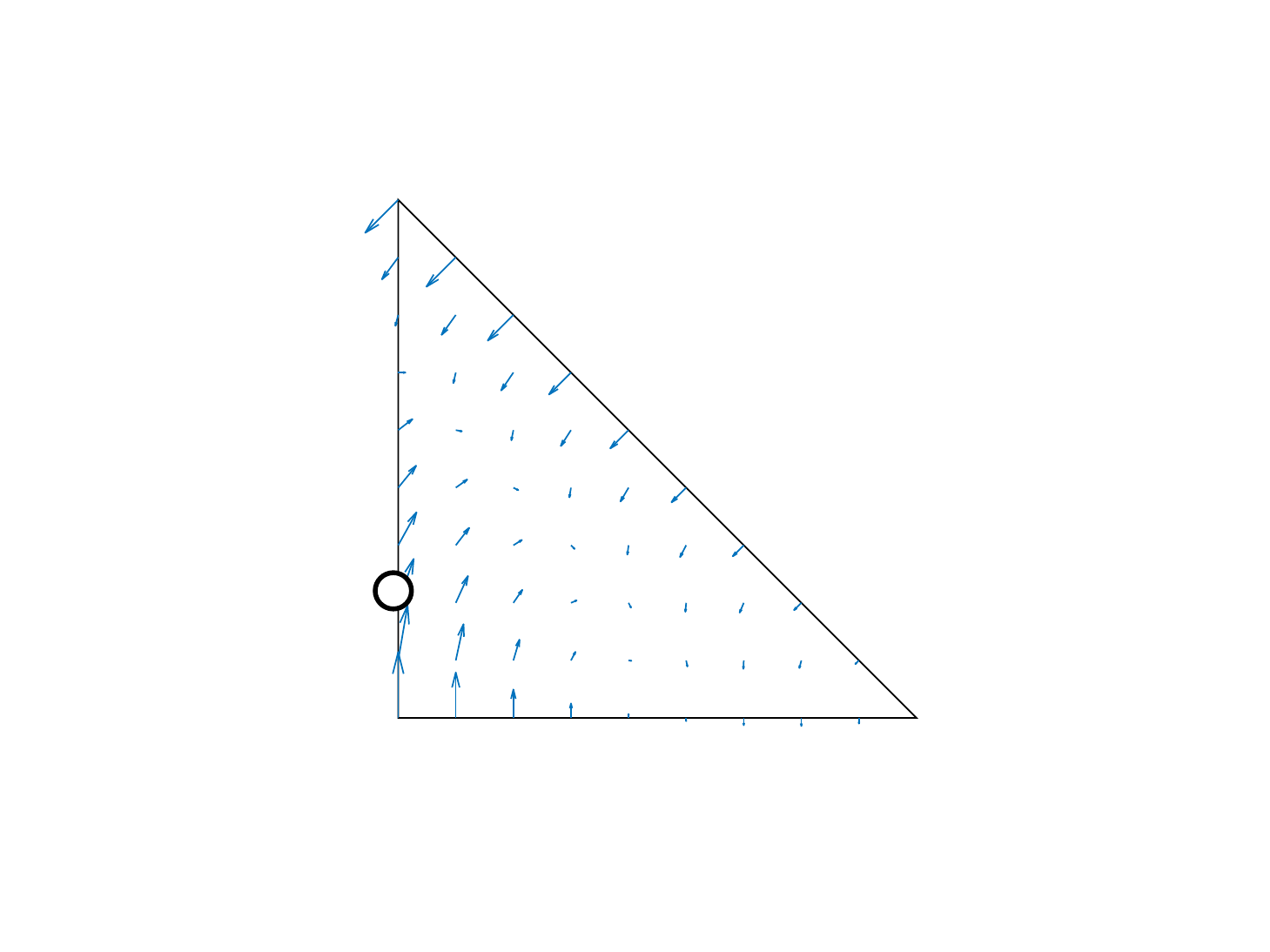}
        \caption{$\phi^1_{e_2}$}
        \label{fig-nd2_tet_e3}
    \end{subfigure}
            \begin{subfigure}[t]{0.24\textwidth}
      \includegraphics[trim={\crop_s \crop_s \crop_s \crop_s},clip,width=\textwidth]{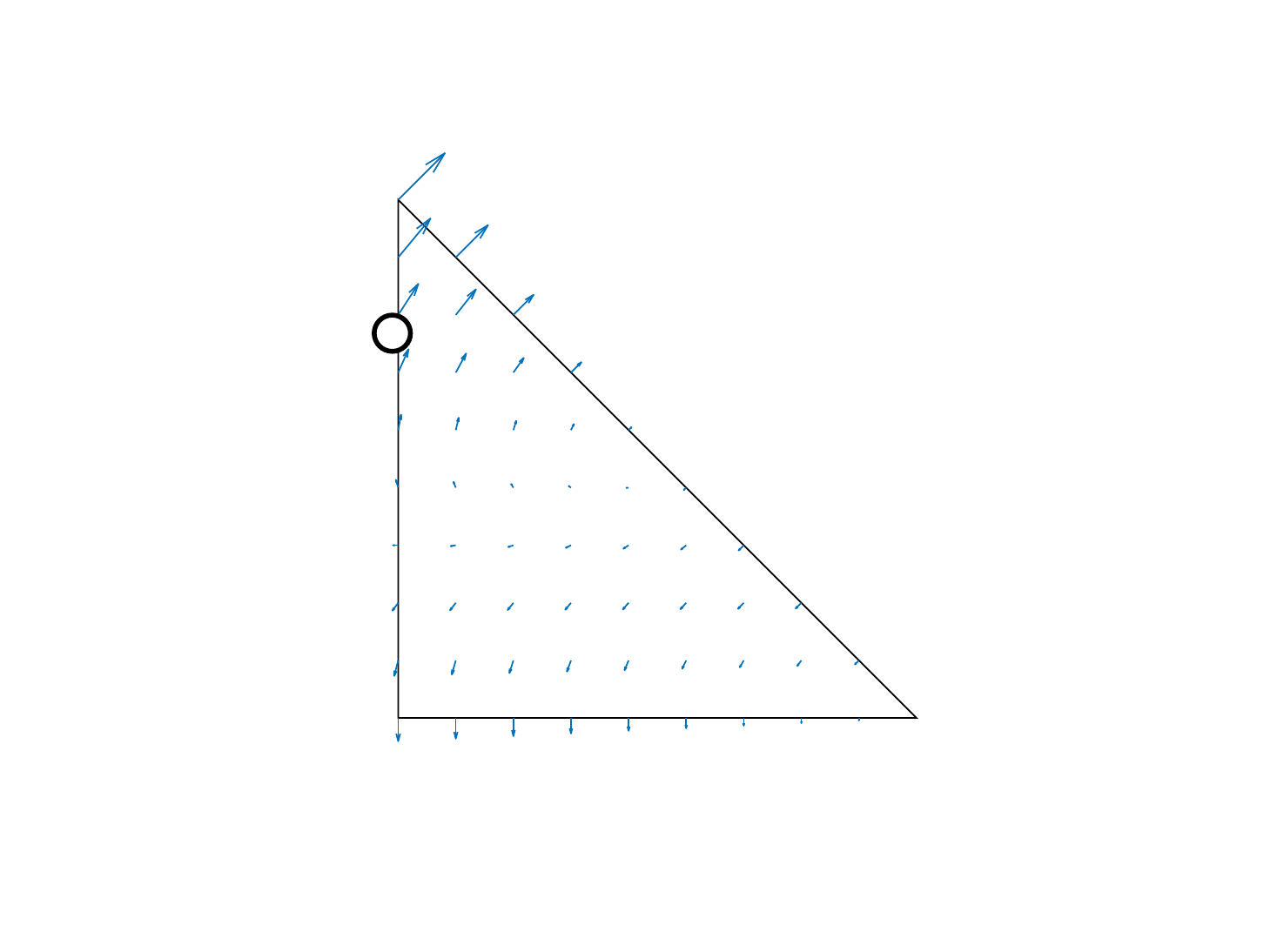}
        \caption{$\phi^2_{e_2}$}
        \label{fig-nd2_tet_e4}
    \end{subfigure}

         \begin{subfigure}[t]{0.24\textwidth}
    \includegraphics[trim={\crop_s \crop_s \crop_s \crop_s},clip,width=\textwidth]{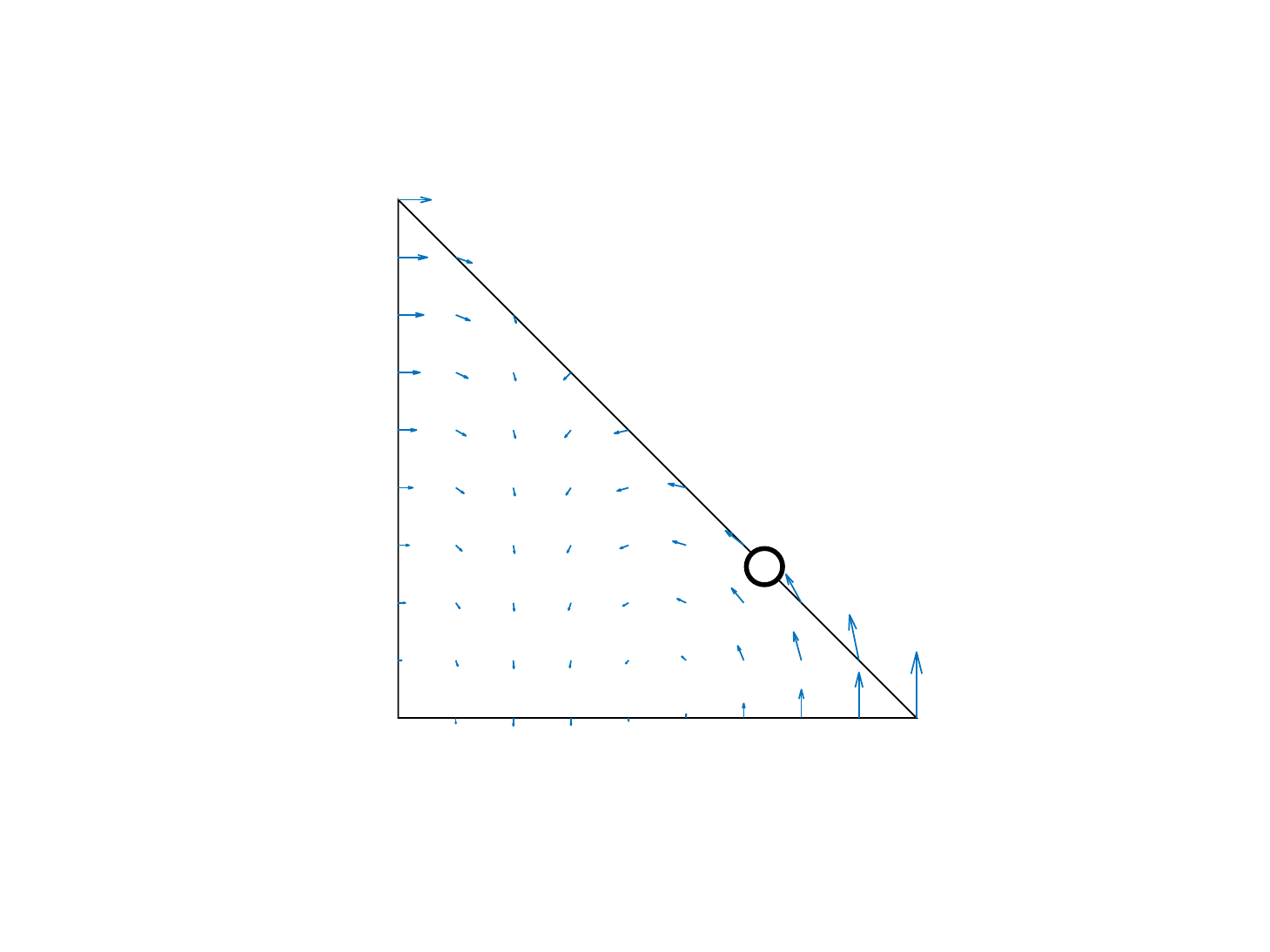}
        \caption{$\phi^1_{e_3}$ }
        \label{fig-nd1_tet_e5}
    \end{subfigure}
    \begin{subfigure}[t]{0.24\textwidth}
       \includegraphics[trim={\crop_s \crop_s \crop_s \crop_s},clip,width=\textwidth]{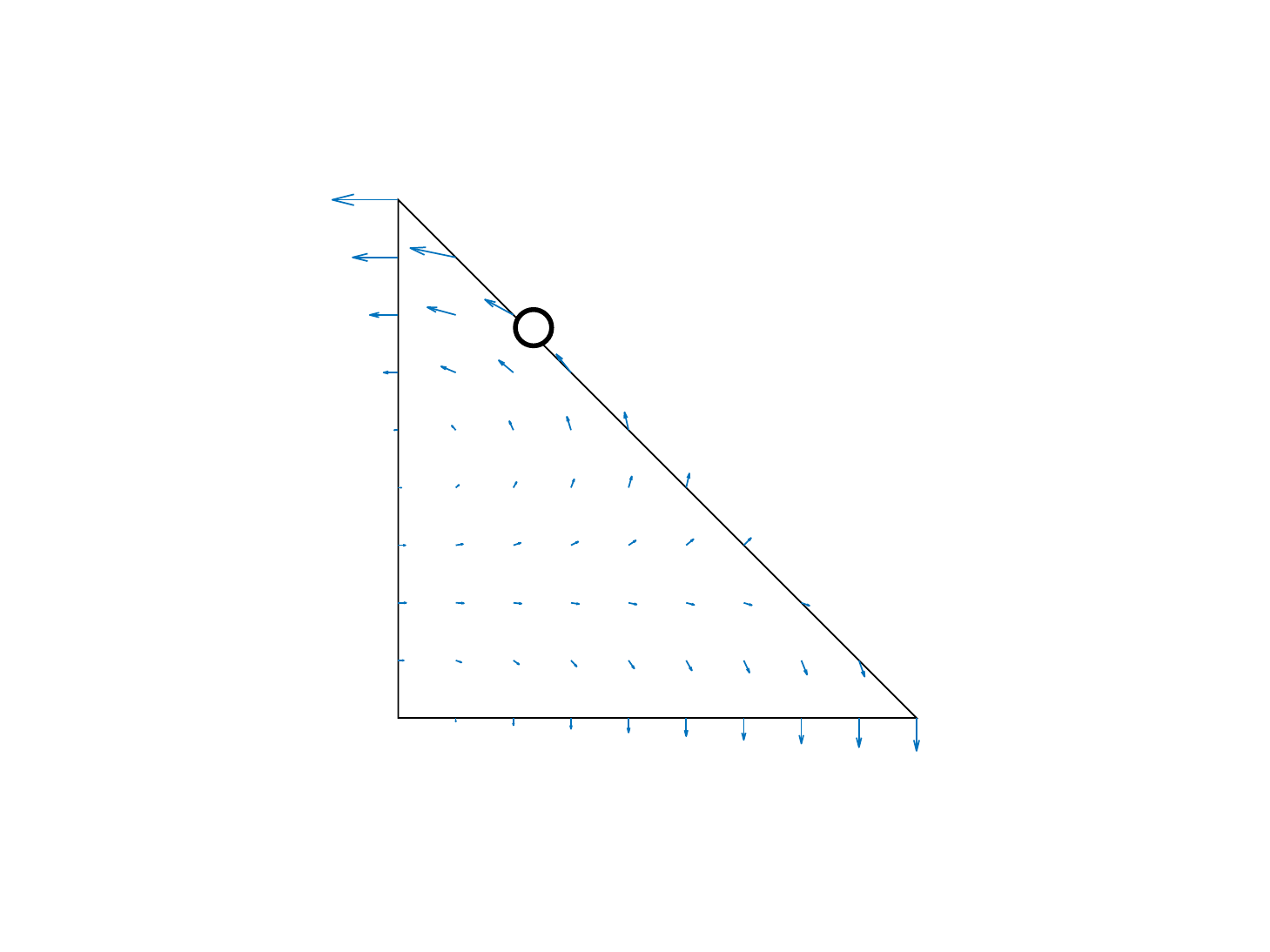}
        \caption{$\phi^2_{e_3}$ }
        \label{fig-nd1_tet_e6}
    \end{subfigure}
        \begin{subfigure}[t]{0.24\textwidth}
        \includegraphics[trim={\crop_s \crop_s \crop_s \crop_s},clip,width=\textwidth]{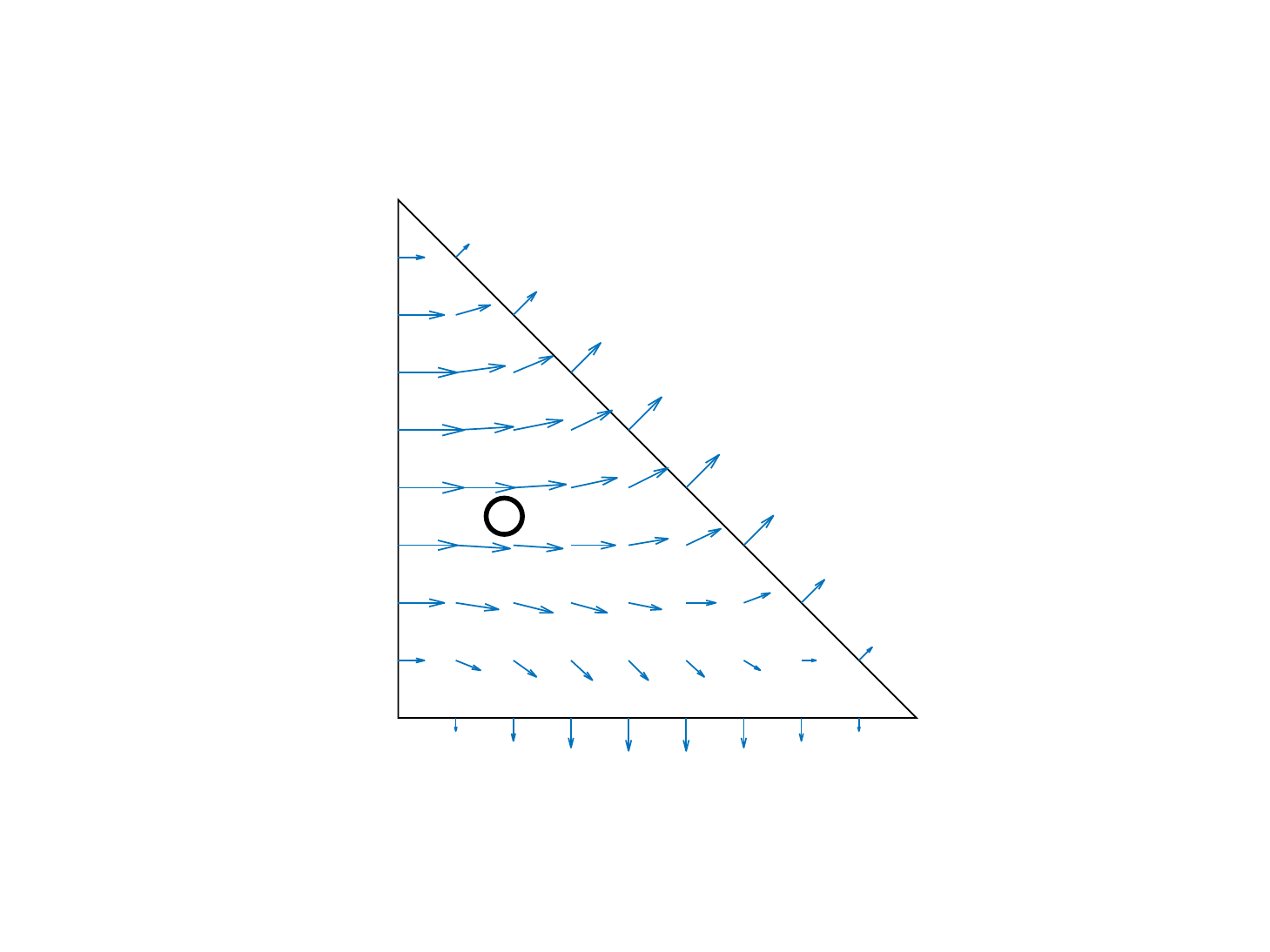}
        \caption{$\phi^1_{K}$}
        \label{fig-nd1_tet_e7}
    \end{subfigure}
            \begin{subfigure}[t]{0.24\textwidth}
        \includegraphics[trim={\crop_s \crop_s \crop_s \crop_s},clip,width=\textwidth]{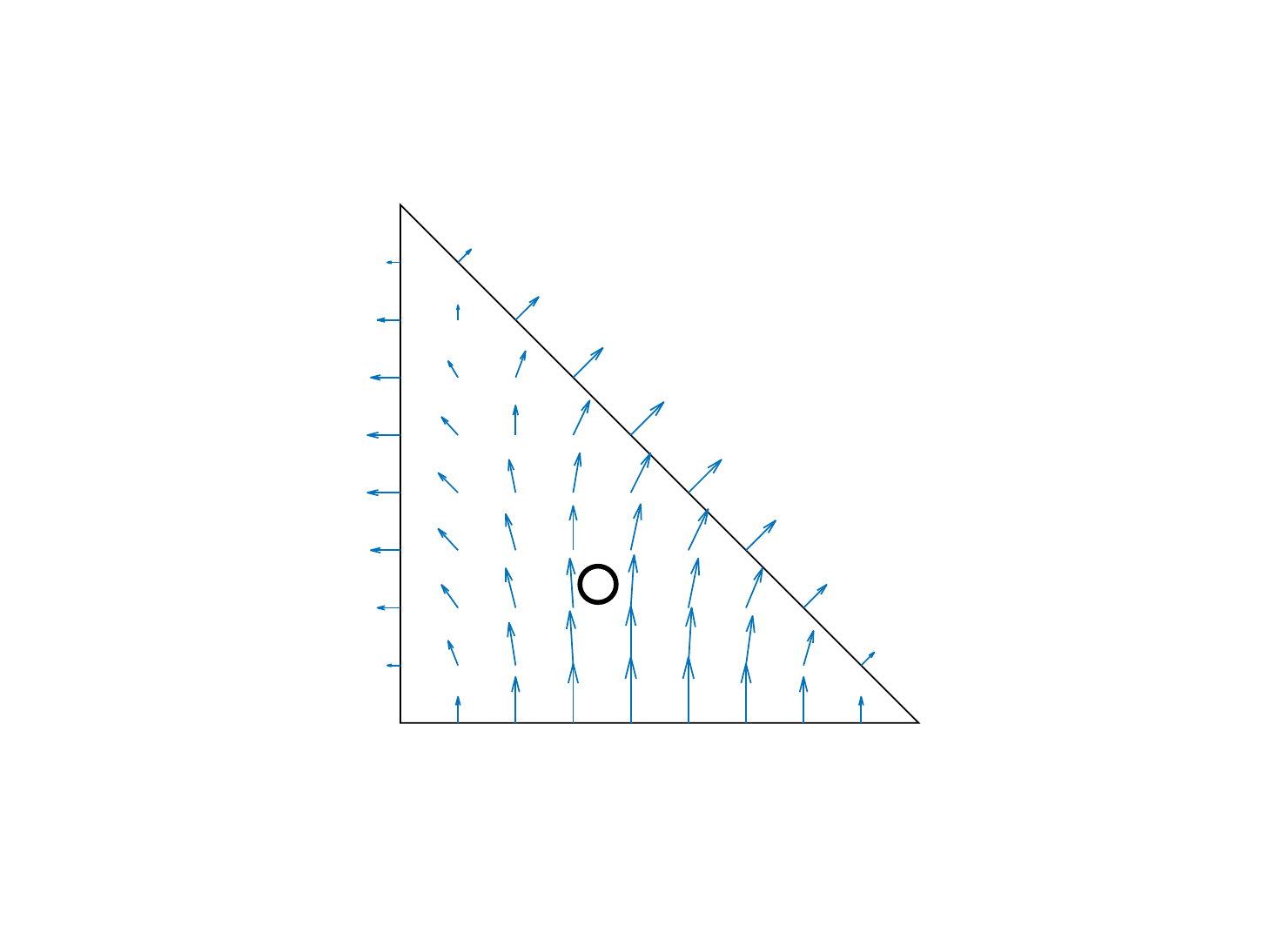}
        \caption{$\phi^2_{K}$}
        \label{fig-nd1_tet_e8}
    \end{subfigure}
    \caption{Vector-field plots of the shape functions in the second order edge element. The indices of edges follow  the geometrical entities indices for the {reference} tetrahedron in \fig{fig-3D_tet_vefs} restricted to the plane $z=0$. Auxiliary circles denote the geometrical entity where the moment is defined.}\label{fig-NED_s2}
\end{figure}

\section{Global \ac{fe} spaces and conformity}\label{sec-assembly}
A \ac{fe} space is $H$(curl)-conforming if the tangential components at the interface between elements are continuous, i.e., they do not have to satisfy normal continuity over element faces. The discrete global \ac{fe} space where the magnetic field solution $\uu_h$ lies is defined as
\begin{align}
\mathcal{ND}_k(\Omega) = \{ \vv_h \in \Hcurl \hbox{ such that } \vv_h |_K \in \esp_k(K) \, \forall \, K \in \mathcal{T}_h \},
\end{align}
where $\esp_k$ has been defined for the hexahedral and tetrahedral edge \acp{fe} in \sect{subsec-polspaces}.

\subsection{Moments in the physical space and the Piola map}\label{subsec-phymoms}
In this section we define a set of \acp{dof} (moments) in the physical space that allow one to interpolate analytical functions onto the edge \ac{fe} space, e.g., to enforce Dirichlet data. One can check that the continuity of those \acp{dof}  which lay at the boundary between cells provides the desired continuity of the tangential component. On top of that, using the so-called covariant Piola mapping ${\funmap}_\geophy(\hat{\vv}) \doteq \hat{\funmap}_\geophy (\hat{\vv}) \circ \geomap^{-1}_\geophy$, where
\begin{align}\label{eq-Piola_map}
\hat{\funmap}_K(\hat{\vv}) \doteq \jacobian^{-T} \hat{\vv},
\end{align}
it can be checked that a \ac{dof} in the reference space of a function $\hat{\vv}$ on $\georef$ is equal to the corresponding \ac{dof} in the physical space for
${\funmap}_\geophy(\hat{\vv})$ on $\geophy$ (see~\cite[Ch.\ 5]{monk_finite_2003} for details). Thus, the Piola mapping, which preserves tangential traces of vector fields~\cite{rognes_efficient_2009}, is the key to achieve a curl-conforming global space by using a \ac{fe} space relying on a reference \ac{fe} definition. %It can be checked that, by enforcing the continuity of \ac{dof} values on edges and/or faces for all the elements that contain them, one already enforces continuity of the tangent component across elements~\cite{monk_finite_2003}. {Besides, \ac{dof} values associated to interior moments  vanish on the moments defined on the cell boundary, and therefore, continuity across elements is satisfied.}
%In this section, we define the moments in the physical space to interpolate analytical functions into the edge \ac{fe} space, e.g., to enforce Dirichlet data. %On the other hand, for functions in the edge \ac{fe} space, each of the sets of \acp{dof} for $\hat{\uu}$ on $\georef$ are equivalent to the \acp{dof} for $\uu$ on $\geophy$, provided the transformation \eq{eq-Piola_map} is used (see~\cite[Ch.\ 5]{monk_finite_2003} for details).}

\subsubsection{Hexahedra}
Let us now define the moments for a curl-conforming \ac{fe} defined on a general hexahedron.
%\mo{ The loss of accuracy related with non-affine mappings appears when one defines the space in the reference \ac{fe}, but not considering the moments in the physical space, as follows}.
Given an integer $k\geq 1$, the set of functionals that forms the basis in two dimensions reads:
\begin{subequations}
\begin{align}
\sigma_e(\uu_h)&:=  \int_{e} ( \uu_h \cdot \boldsymbol{\tau} ) q  \quad \forall q \in \mathcal{P}_{k-1}(e), \quad \forall e \in \geophy, \label{eq-rhexnedge_2D} \\
\begin{split}
\sigma_\geophy(\uu_h)&:=  \int_{\geophy}  \uu_h \cdot \boldsymbol{q}, \quad \forall \boldsymbol{q} \quad \text{obtained by mapping} \quad \boldsymbol{q} = \left(\frac{1}{{\rm det}(\boldsymbol{J}_K)}\right){\boldsymbol{J}_K}\hat{\boldsymbol{q}}, \\ & \hspace{3cm} \hat{\boldsymbol{q}} \in \Q_{k-1,k-2}({\georef}) \times  \Q_{k-2,k-1}({\georef}), \label{eq-rhexninner_2D}\end{split}
\end{align}
\end{subequations}
where $\boldsymbol{\tau}$ is the unit vector along the edge. In 3D, the set of functionals reads
\begin{subequations}
\begin{align}
\sigma_{e}(\uu_h):= & \int_{e} ( \uu_h \cdot \boldsymbol{\tau} ) q  \qquad \forall q \in \mathcal{P}_{k-1}(e), \quad \forall e \in \geophy \label{eq-rhexnedge} \\
\begin{split} \label{eq-rhexneface}
\sigma_{f}(\uu_h):= & \int_{\mathcal{F}} ( \uu_h \times \n )\cdot \boldsymbol{q} \qquad \forall \boldsymbol{q} \quad \text{obtained by mapping} \quad \boldsymbol{q}=\boldsymbol{J}_K^{-T}({\boldsymbol{J}_{\hat{f}}} \hat{\boldsymbol{q}}),\\
& \hspace{3cm} \hat{\boldsymbol{q}} \in \Q_{k-2,k-1}(\hat{\mathcal{F}})\times \Q_{k-1,k-2}(\hat{\mathcal{{F}}}), \quad \forall \mathcal{F} \in \geophy \end{split}\\
\begin{split}
\sigma_\geophy(\uu_h):= & \int_{{\geophy}}  \uu_h \cdot \boldsymbol{q} \qquad \forall \boldsymbol{q} \quad \text{obtained by mapping} \quad \boldsymbol{q} = \left(\frac{1}{{\rm det}(\boldsymbol{J}_K)}\right){\boldsymbol{J}_K} \hat{\boldsymbol{q}},  \\
& \hat{\boldsymbol{q}} \in \Q_{k-1,k-2,k-2}({\georef})\times \Q_{k-2,k-1,k-2}({\georef}) \times  \Q_{k-2,k-2,k-1}({\georef}) \label{eq-rhexninner}
\end{split}
\end{align}
\end{subequations}
where $\n$ is the unit normal to the face. The definition of the face moments in \eq{eq-rhexneface} requires to transfer either the 3D vector $(\uu_h \times \boldsymbol{n})$ to the face $\mathcal{F}$ or the vector $\boldsymbol{\hat{q}}$, contained in a reference face, to the 3D physical cell. We choose this latter option, which implies the transformation of the vector $\boldsymbol{\hat{q}}$ to the reference cell through the face Jacobian $\boldsymbol{J}_{\hat{f}}= \frac{\boldsymbol{\partial \Phi_{\hat{K}}}}{\partial \boldsymbol{x}_{\hat{f}}}$.

\subsubsection{Tetrahedra}
% REAL CELL MOMENTS DEFINITION
%%%%%%%%%%%%%%%%%%%%%%%%%%%%%%%%%%%%%%%%%%%%%%%%%%%%%%%%%%%%%%%%%%%%%%%%%%%%%%%%%%%%%%%%%

On a general tetrahedron $K$, we define the moments for a curl-conforming \ac{fe} as follows. Given an integer $k\geq 1$, the set of functionals that forms the basis in two dimensions reads

\begin{subequations}
\begin{align}
\sigma_{e}(\uu_h):= & \int_{e} ( \uu_h \cdot \boldsymbol{\tau} ) q  \quad \forall q \in \mathcal{P}_{k-1}(e), \quad \forall e \in \geophy \label{eq-rtetnedge_2D} \\
\begin{split}
\sigma_\geophy(\uu_h):= &  \int_{\geophy} \uu_h \cdot \boldsymbol{q} \quad \forall \boldsymbol{{q}} \quad \text{obtained by mapping}  \\
& \hspace{3cm}  \boldsymbol{q} = \left(\frac{1}{{\rm det}(\boldsymbol{J}_K)}\right){\boldsymbol{J}_K} \hat{\boldsymbol{q}}, \quad \hat{\boldsymbol{q}}\in[\mathcal{P}_{k-2}(\georef)]^2. \label{eq-rtetninner_2D}
\end{split}
\end{align}
\end{subequations}
Clearly, we have $3k$ edge \acp{dof} and $k(k-1)$ inner \acp{dof}, which lead to a total number of $n_{{\Sigma}}=k(k+2)$ \acp{dof}. In three dimensions the set ${\Sigma}$ is defined as
\begin{subequations}\label{eq-rtetmoms}
\begin{align}
\sigma_{e}(\uu_h):= & \int_{e} ( \uu_h\cdot \boldsymbol{\tau} ) q \qquad \forall q \in \P_{k-1}(e), \quad \forall e \in \geophy \label{eq-rtetnedge} \\
\sigma_f(\uu_h):= & \frac{1}{\| \mathcal{F} \|} \int_{\mathcal{{F}}} \uu_h \cdot \boldsymbol{q} \qquad \forall \boldsymbol{q} = {\boldsymbol{J}_K} \hat{\boldsymbol{q}} \quad \text{ s.t. } \boldsymbol{\hat{q}}\cdot\hat{\n}=0, \label{eq-rtetneface} \\
& \hspace{3cm} \hat{\boldsymbol{q}}\in[\mathcal{P}_{k-2}(\hat{f})]^3 \quad \forall f \in \geophy \notag \\
\begin{split}
\sigma_\geophy(\uu_h):= & \int_{\geophy}  \uu_h \cdot \boldsymbol{q} \qquad \forall \boldsymbol{q} \quad \text{obtained by mapping} \label{eq-rtetninner} \\
& \hspace{3cm} \boldsymbol{q} = \left(\frac{1}{{\rm det}(\boldsymbol{J}_K)}\right){\boldsymbol{J}_K} \hat{\boldsymbol{q}},\quad \hat{\boldsymbol{q}}\in[\mathcal{P}_{k-3}(\georef)]^3.
\end{split}
\end{align}
\end{subequations}

\subsection{Nedelec interpolator}
The space of edge \ac{fe} functions can be represented as the range of an interpolation operator $\pi^h$ that is well defined for sufficiently smooth functions $\uu \in \Hcurl$ by
\begin{eqnarray}\label{eq:interpolant}
{\pi}^h(\uu) := \sum_a u^a(\uu) \phi^a
\end{eqnarray}
where $u^a(\uu)=\sigma^a(\uu)$ are the evaluation of the moments for the function $\uu$ described for the hexahedra and tetrahedra cases in \sect{subsec-phymoms} for all $e\in\mathcal{E}$, $F\in\mathcal{F}$ and $\geophy \in \mathcal{T}_h$. Note that Dirichlet boundary conditions can be strongly imposed in the resulting system (usual implementation in \ac{fe} codes) by evaluating the moments corresponding to edges and/or faces on the Dirichlet boundary given the analytical expression of the tangential trace.

%\begin{proposition}\label{edge_integral}
%The integral average of a function in the edge space (i.e. spanned by $\{\phi^a\}_{1}^{N_k}$) defined by moments \ref{subsec-ned_moms}, using Lagrangian polynomials as test functions, over a subset of edges $E=\cup_i^{n_e} e_i \subset \mathcal{E}$ is equal to
%\begin{eqnarray}
%\int_{E} \uu_h \cdot \boldsymbol{\tau}_E = \sum_{a\in \sigma_E} u_h^{a},
%\end{eqnarray}
%where $a$ is the index of moments related to shape functions on top of the subset of edges, $a=\{1, \ldots, k n_e \}$ as $k$ moments are related to each each edge $e$. In this definition $\boldsymbol{\tau}_E = \cup_{e_i \in E} \boldsymbol{\tau}_{e_i}$.
%\end{proposition}
%\begin{proof}
%The key is to note that, basis functions for Lagrangian polynomial spaces form a partition of the unity. Therefore, we can decompose the unit function into the set of Lagrangian functions,
%\begin{eqnarray}
%\int_{E} \uu_h \cdot \boldsymbol{\tau} & = & \sum_{i\in ND_k} u_h^i \int_E \phi^i \cdot \boldsymbol{\tau} \\
%& = & \sum_{i\in ND_k} u_h^i \int_E \phi^i \cdot \boldsymbol{\tau} ( \sum_{j\in \mathcal{L}^{k-1}} q_j )  \nonumber \\
%& = & \sum_{i\in E} u_h^i \nonumber
%\end{eqnarray}
%since, in every edge $e\in E$ we have $\sigma_{e_j}(\phi^i) = \delta_{ij}$ for $j=\{1,\ldots,k\}$ and $i=\{1,\ldots,N_k\}$.
%\end{proof}

\subsection{Global \acp{dof}} \label{sec:global_space}
In order to guarantee global inter-element continuity with Piola-mapped elements, special care has to be taken with regard to the orientation of edges and faces at the cell level.
Let us consider a global numbering for the vertices in a mesh and a local numbering at the cell level. Given an edge/face, sorting its vertices with respect to the local (resp. global) index of their vertices, one determines the local (resp. global) orientation of the edge/face. A mesh in which the local and global orientation of all its edges and faces coincide is called an \emph{oriented mesh}. For oriented meshes, common edges or faces for two adjacent tetrahedra will always agree in its orientation, thus represent the same global \ac{dof}. Tetrahedral meshes are oriented with a simple local renumbering~\cite{rognes_efficient_2009}, and we restrict ourselves to hexahedral octree meshes that are oriented by construction.

Local \acp{dof} are uniquely determined by the cell in which they are defined, the geometrical entity within the reference cell that owns them and the local numbering of \acp{dof} within the geometrical entity (see Sect. \ref{subsubsec-hex-ref-moms}). The local numbering only depends on the orientation of the geometrical entity through the ordering of its vertices.

Global \acp{dof} are defined as an equivalence class over the union of the local \acp{dof} for all cells. Two local \acp{dof} are the same global \ac{dof} if and only if they belong to the same geometrical object and the same local numbering within the geometrical object in their respective cells. The previous equivalence class leads to a curl-conforming \ac{fe} space if local orientations of geometrical objects coincide with a global orientation, i.e., on oriented meshes only.

For the sake of illustration, see the simple mesh depicted in \fig{fig-oriented_mesh}, composed by two tetrahedra defined by the global vertices $v_1, v_2, v_3, v_4$ and $v_5$. The two elements share a common face, defined by 3 vertices that have different local indices for all $v_i \in K_i$ and $v_j \in K_j$. The ordering convention, local indexing according to ascending global indices, ensures that both triangles agree on the direction of the common edges and the common face. Note that the Jacobian of the transformation may become negative with the orientation procedure. We only need to make sure that the absolute value of the Jacobian is taken whenever the measure of the change of basis is applied.
\begin{figure}[t!]
    \centering
        \includegraphics[width=0.8\textwidth]{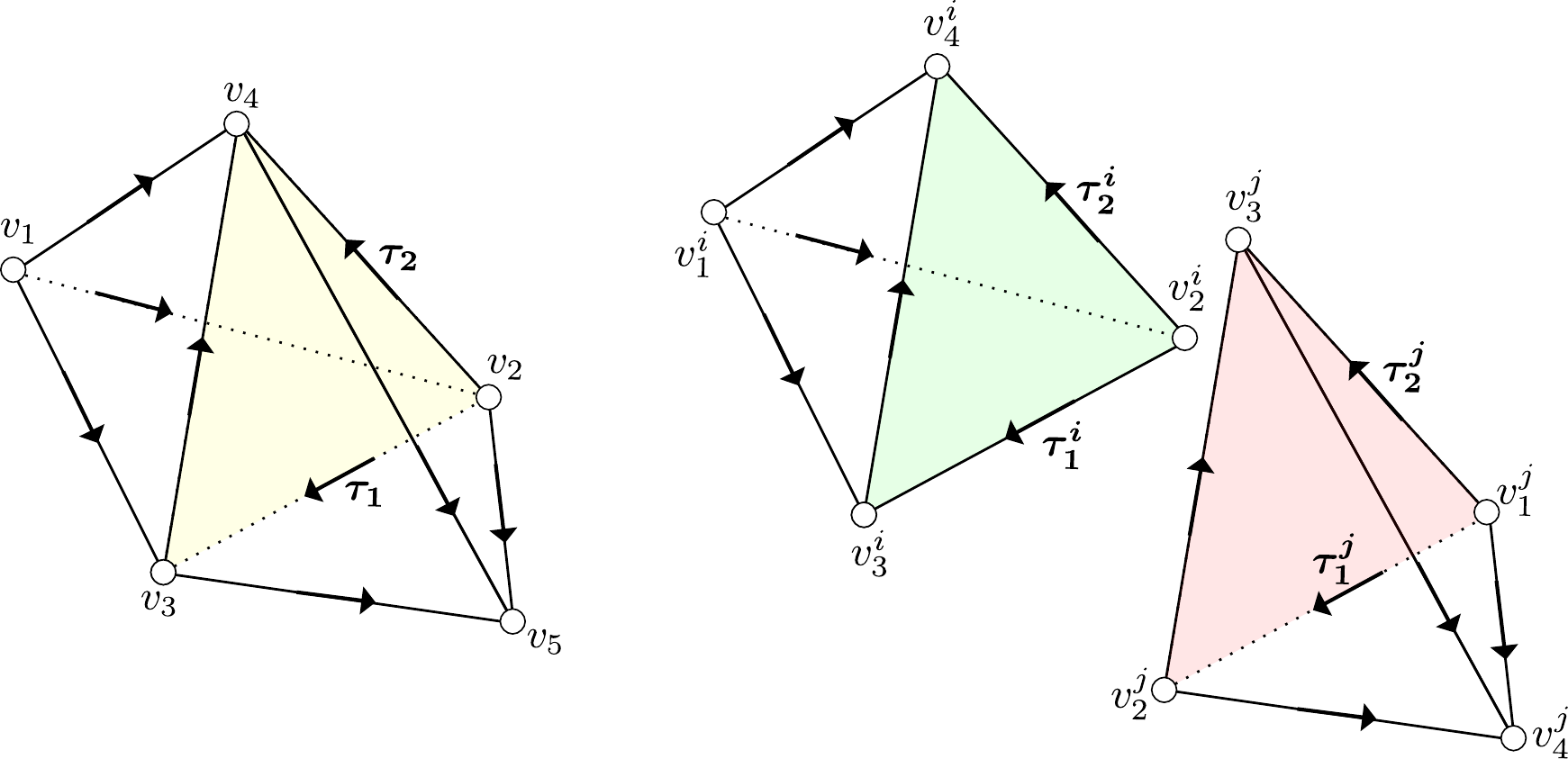}
    \caption{Oriented mesh. Global information (left) and its local, oriented, counterpart (right) for two elements $K_i$ and $K_j$. Common edges for two adjacent tetrahedra will always agree in its direction, as well as chosen tangent vectors $\boldsymbol{\tau_1}$ and $\boldsymbol{\tau_2}$ to the common face.}\label{fig-oriented_mesh}
\end{figure}
The situation is much more involved for general hexahedral meshes. In any case, our implementation of hexahedral meshes relies on octree meshes, where consistency is ensured. It is easy to check that an octree mesh in which a cell inherits the orientation of its parent is \emph{oriented} by construction.

\section{Edge \acp{fe} in $h$-adaptive meshes}\label{sec-h_adaptive}
In this section, we expose an implementation procedure for \emph{non-conforming} meshes and edge \acp{fe} of arbitrary order. In the following, we restrict ourselves to hexahedral meshes, even though the extension to tetrahedral meshes is straightforward.
%The mesh generation step relies on hierarchically refined octree-based hexahedral meshes~\cite{Tu_2005}. In any case, the following ideas can readily be used for hierarchically refined
 Our procedure can also be readily extended to any \ac{fe} space that relies on a \emph{pre-basis} of Lagrangian polynomial spaces plus a change of basis, e.g., Raviart-Thomas \acp{fe}.

\subsection{Hierarchical \ac{amr} on octree-based meshes}\label{subsec-AMR}

The \ac{amr} generation relies on hierarchically refined octree-based hexahedral meshes~\cite{Tu_2005} and the p4est~\cite{BursteddeWilcoxGhattas11} library is used for such purpose. \mo{In this method, one must enforce constraints to ensure the conformity of the global \ac{fe} space, which amounts to compute constraints between coarser and refined geometrical entities shared by two cells with different level of refinement \cite{demkowicz_book}}. For Lagrangian elements, restriction operators in geometrical sub-entities are generally obtained by evaluating the shape functions associated with the coarse side of the face at the interpolation points of the shape functions on the refined side of the face~\cite{Bangerth_2009}. The idea is conceptually equivalent for edge elements but considerably more complex to implement because \acp{dof} do not represent nodal values but moments on top of geometrical entities. In this work, we follow an approach based on the relation between a Lagrangian \emph{pre-basis}, where moments are trivial to evaluate, and the edge basis, so we can avoid the evaluation of edge moments on the refined cell to compute constraints.

Let $\mathcal{T}^{'}_h$ be a conforming partition of $\Omega$ into a set of hexahedra (quadrilateral in 2D) geometrical cells $\geophy$. $\mathcal{T}^{'}_h$ can, e.g., be as simple as a single quadrilateral or hexahedron. Starting from $\mathcal{T}^{'}_h$, hierarchical \ac{amr} is a multi-step process in which at each step, some cells of the input mesh are marked for refinement. A cell marked for refinement is partitioned into four (2D) or eight (3D) children cells by bisecting all cell edges $\mathcal{E}_\geophy$. Let us denote by $\mathcal{T}_h$ the resulting partition of $\Omega$. $\mathcal{T}_h$ can be thought as a collection of quads (2D) or octrees (3D) where the cells of $\mathcal{T}^{'}_h$ are the roots of these trees, and children cells branch off their parent cells. The leaf cells in this hierarchy form the mesh in the usual meaning, i.e., $\mathcal{T}_h$. Thus, for every cell $K \in \mathcal{T}_h$ we can define $\ell(K)$ as the level of $K$ in the aforementioned hierarchy, where $\ell(K)=0$ for the root cells, and $\ell(K)=\ell({\rm parent}(K))+1$ for any other cell. Clearly, the cells in $\mathcal{T}_h$ can be at different levels of refinement. Thus, these meshes are {\em non-conforming}. In order to complete the definition of $\mathcal{T}_h$, let us introduce the concept of \emph{hanging} geometrical entities. For every cell $\geophy \in \mathcal{T}_h$, consider its set of vertices $\mathcal{N}_\geophy$, edges $\mathcal{E}_\geophy$ and faces $\mathcal{F}_\geophy$. We can represent the set of geometrical entities that have lower dimension than the cell by $\mathcal{G}_\geophy=\mathcal{N}_\geophy \cup \mathcal{E}_\geophy \cup \mathcal{F}_\geophy$. Its global counterpart is defined as $\mathcal{G}_{\mathcal{T}}=\cup_{\geophy\in\mathcal{T}_h} \mathcal{G}_\geophy$. For every geometrical entity $s\in\mathcal{G}_{\mathcal{T}}$, we can represent by $\mathcal{T}_s$ the set of cells $\geophy \in \mathcal{T}_h$ such that $s \in \mathcal{G}_\geophy$. {Additionally, $\tilde{\mathcal{T}}_s$ is defined as the set of cells $\geophy \in \mathcal{T}_h$ such that $s \subsetneq s'$ for some $s'\subset \mathcal{G}_\geophy$. Roughly speaking, one set contains all the cells where the geometrical entity is found while the other set contains all the cells where the geometrical entity is a strict subset of a coarser geometrical entity}. A  geometrical entity $s\in\mathcal{G}_\mathcal{T}$ is \emph{hanging} (or \emph{improper}) if and only if {there exists at least one neighbouring cell in $\tilde{\mathcal{T}_s}$.}
We represent the set of hanging geometrical entities with $\mathcal{G}_\mathcal{T}^{\rm hg}$.
{The definition of the proposed \ac{amr} approach is completed by imposing the so-called 2:1 balance restriction, which is used in mesh adaptive methods by a majority of authors as a reasonable trade-off between performance gain and complexity of implementation~\cite{Tu_2005, BursteddeWilcoxGhattas11, demkowicz_book}.}
\begin{definition} \label{def-21_balance}
A $d$-tree ($d=\{2,3\}$) is 2:1 $k$-balanced if and only if, for any cell $K\in \mathcal{T}_h$ there is no $s \in \mathcal{G}_\geophy$ of dimension $m\in[k,d)$ having non-empty intersection with the closed domain of another finer cell $K'\in \tilde{\mathcal{T}_h}$ such that $\ell(K') - \ell(K) > 1$.
\end{definition}
{For edge \acp{fe} it is enough to consider 1-balance since vertices do not contain any associated \ac{dof}.}
For the sake of clarity, in Figs.~\ref{fig-hanging_node} and \ref{fig-2lev_hang_nodes}, allowed \emph{hanging} geometrical entities are depicted in red, whereas in Fig.~\ref{fig-2lev_hang_nodes}, not allowed ones are shown in blue. Clearly, the latter mesh is the result of a refinement process that does not accomplish the 2:1 balance, thus not permitted in our \ac{amr} approach. Note that in order to enforce the 2:1 balance in the situation depicted in Fig. \ref{fig-2lev_hang_nodes}, one would need to apply additional refinement to some cells with lower values for $\ell(K)$ until the restriction (\ref{def-21_balance}) is satisfied.

\begin{figure}[t!]
    \centering
    \begin{subfigure}[t]{0.20\textwidth}
        \includegraphics[width=\textwidth]{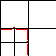}
        \caption{\emph{Hanging} entities marked in red.}
        \label{fig-hanging_node}
    \end{subfigure}
    \hspace{1cm}
    \begin{subfigure}[t]{0.20\textwidth}
        \includegraphics[width=\textwidth]{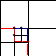}
        \caption{Not permitted \emph{hanging} entities marked in blue.}
        \label{fig-2lev_hang_nodes}
    \end{subfigure}
       \hspace{1cm}
        \begin{subfigure}[t]{0.20\textwidth}
        \includegraphics[width=\textwidth]{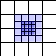}
        \caption{Centered refinement pattern. $\ell\in\{2,3,4\}$ from clearest to darkest.}
        \label{fig-refmesh}
    \end{subfigure}

    \caption{$h$-adaptive refined (single octree) {\em non-conforming} meshes.}
    \label{fig-nc_meshes}
\end{figure}

\subsection{Conformity of the global \ac{fe} space}
In order to preserve the conformity of the \ac{fe} space $\mathcal{ND}_k(\Omega)$, we cannot allow an arbitrary value for \acp{dof} placed on top of \emph{hanging} geometrical entities, which will be denoted by \emph{hanging} \acp{dof}. Our approach is to eliminate the \emph{hanging} \acp{dof} of the global system by defining a set of constraints such that curl-conformity is preserved. We propose an algorithm that computes the edge \ac{fe} constraints by relying on the ones of Lagrangian \acp{fe} and the change of basis in Sect. \ref{subsec-ned_shapes}. This way, one can reuse existing ingredients in a nodal-based \ac{amr} code and work already required to define the edge \ac{fe} shape functions. % Furthermore, the same implementation can be applied to other \ac{fe} spaces.

The computation of constraints requires some preliminary work at the geometrical level, i.e., independent of the \ac{fe} space being used. Let us first compute the set of {hanging} geometrical entities $\mathcal{G}_\mathcal{T}^{\rm hg}$. For every $g \in \mathcal{G}_\mathcal{T}^{\rm hg}$, let us compute the coarser geometrical entity $G(g) \in \mathcal{G} \setminus \mathcal{G}_\mathcal{T}^{\rm hg}$ that contains it. Let us assign to every $g \in \mathcal{G}_\mathcal{T}$ one cell such that $K(g) \in \mathcal{T}_g$. With this information, for every $g \in \mathcal{G}_\mathcal{T}^{\rm hg}$, we can extract the fine cell $K_h \equiv K(g)$ and the coarse cell $K_{2h} \equiv K(G(g))$ (see \fig{fig-cells_a}).

The coarse cell $K_{2h}$ can be refined (once) to meet the level of refinement of $K_h$. (The coarse cell is only refined for the computation of constraints so the original mesh is not affected.) Let us consider its children cells $\{ K_h^s \}_{s=1}^{2^d}$ obtained after isotropic refinement by the procedure exposed in \sect{subsec-AMR}, see \fig{fig-cells_b}. We represent the patch of subcells with $\tilde{K}_h = \bigcup_{i=1}^{2^d} K_{h}^s$. We can determine a subcell index $s(g)$ such that $g \in \mathcal{G}_{K^{s(g)}_h}$; $s(g)$ is not unique in general, see \fig{fig-cells_b}.

\begin{figure}[t!]
    \centering
    \begin{subfigure}[t]{0.3\textwidth}
        \includegraphics[width=\textwidth]{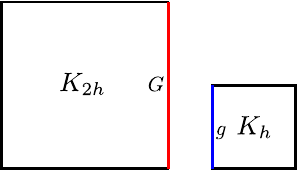}
        \caption{Fine cell $K_h \equiv K(g)$ and coarse cell $K_{2h} \equiv K(G(g))$.}
        \label{fig-cells_a}
    \end{subfigure}
    \hspace{1cm}
    \begin{subfigure}[t]{0.30\textwidth}
        \includegraphics[width=\textwidth]{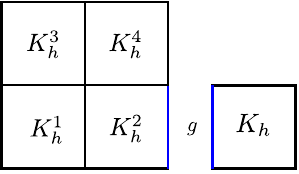}
        \caption{Coarse cell refined into children cells $\{ K_h^s \}_{s=1}^4$. In this example, $s(g)=2$. }
        \label{fig-cells_b}
    \end{subfigure}
    \caption{The coarse cell $K_{2h}$ is refined in order to meet the highest level of refinement. Then, the index $s(g)$ can be determined. }
    \label{fig-cells_scheme}
\end{figure}

\subsubsection{Constraints for Lagrangian \acp{fe}}
In this section, we compute the constraints for Lagrangian \ac{fe} spaces. Using the geometrical information above, given a hanging geometrical entity $g$, let us consider the Lagrangian \ac{fe} spaces $\lagesp2hk$, $\lagespthk$ and $\lagesphk$ (see \sect{sec-assembly} for the definitions, Figs. \ref{fig-or_fespace} and \ref{fig-Roperator} for an illustration). The objective is to compute the constraints over \acp{dof} on $g \in \mathcal{G}_\mathcal{T}^{\rm hg}$ that enforce global continuity. This continuity is enforced by evaluating the coarse cell $K_{2h}$ shape functions on the fine cell $K_h$ nodes (\acp{dof}) on $g$. Since both cells share the \ac{fe} order, such set of constraints leads to full continuity across $g$~\cite[Ch.\ 3]{Solin2003}.  Let us explain the steps followed to compute these constraints.
%\begin{enumerate}[label={(\Roman*)}]\label{enum-steps}
%\item \label{itm-refine} Refine the coarse cell in order to put both cells at the same level of refinement. Obviously, the coarse cell is only refined for the computation of constraints so the original mesh is not affected. Let us consider its children cells $\{ K_h^s \}_{s=1}^{2^d}$ obtained after isotropic refinement by the procedure exposed in \sect{subsec-AMR}, see \fig{fig-cells_b}. Besides, let us denote by $\lagesp2hk$, $\lagespthk$ and $\lagesphk$ the grad-conforming \ac{fe} spaces based on Lagrangian polynomials (see \sect{sec-assembly}) built on $K_{2h}$, the patch of cells $\tilde{K}_h = \bigcup_{i=1}^{2^d} K_{h}^s$ or a children cell $K_h^s \subset \tilde{K}_h$, resp. (see Fig. \ref{fig-dofs_scheme}).
%\item \label{itm-spaces}

Clearly, $\lagesp2hk \subset \lagespthk$, so we can apply the Lagrangian interpolant to inject $\uu^{2h} \in \lagesp2hk$ into $\lagespthk$. For this purpose, consider the set of original Lagrangian basis functions $\{ \varphi^{j} \}_{j=1}^{n_k}$ spanning $\lagesp2hk$, which has cardinality $n_k=\prod_i^d (k_i +1)$ (see \sect{subsec-polspaces} for details). Further, consider the set of moments $\{\sigma_i\}_{i=1}^{\tilde n_{k}}$ uniquely defining a solution in $\lagespthk$, which has cardinality $\tilde n_{k} = \prod_i^d (2k_i+1)$ (see \sect{subsec-lag_polys}). Given the vector of \ac{dof} values of $\uu^{2h}$, the restriction operator $\boldsymbol{R}: \lagesp2hk' \rightarrow \lagespthk'$
  \begin{align}\label{eq-Rl}
R_{ij} \doteq  \sigma_i( \varphi^j ), \qquad i = 1, \ldots, \tilde n_k, \quad j = 1, \ldots, n_k,
\end{align}
provides the \acp{dof} of the interpolated function.
%where we used the definition of Lagrangian moments, see \sect{subsec-lag_polys}.
% The restriction operator operator \eq{eq-restr_operator} naturally provides the constraints that \ac{dof} values that define the solution $\uu^h \in \lagespthk$, must satisfy from those ones defining $ \uu^{2h} \in \lagesp2hk$ to expand the same solution as
%\begin{eqnarray}
%u^{h,i} = R_{ij} u^{2h,j}.
%\end{eqnarray}
 Further, note that given a \emph{hanging} \ac{dof} $u^{h,i}$ on top of $g$, it is easy to check that $R_{ij}=0$ if \ac{dof} $u^{2h,j}$ is not on $G(g)$. %By construction of the Lagrangian polynomials based on tensor-products.
%\item \label{itm-find_s}

  %Determine a $K_h^s \subset \tilde K_{h}$ such that the \emph{hanging} entity $g \in \mathcal{G}_{K_h} \cap \mathcal{G}_{K_h^s}$. Note that $K_h^s$ can be non-unique, i.e., multiple children cells may be found. In such a case, one can freely choose a children cell among the found ones.
  For every subcell $K_h^s$ one can readily determine the map $w_s(\cdot)$ such that, given the local cell index $i\in\{1,\ldots,n_k \}$ for moments defined on $\lagesphk'$ ($s\in \{1,\ldots,2^d\}$), returns a global moment index $i'\in \{ 1,\ldots,\tilde n_{k}\}$ in the space $\lagespthk'$. It leads to the subcell restriction operator (of \ac{dof} values) $\R^s : \lagesp2hk' \rightarrow \lagesphk'$ defined as
  \begin{align}\label{eq-Rned}
    R_{ij}^s \doteq R_{w_s(i) j}  \qquad i = 1, \ldots, n_k, \quad j = 1, \ldots, n_k.
\end{align}
 A representation of the action of $\R$ and $\R^s$ can be seen in \fig{fig-Roperator}. Note that they are independent of the level of refinement and the cell in the physical space; they are computed only once at the reference cell.

We can  identify every \ac{dof} $i \in \lagesphh' |_g$ with a \ac{dof} $i' \in \mathcal{L}_{\boldsymbol{k}}(K_h^{s(g)})' |_g$ as for conforming Lagrangian meshes; two local \ac{dof} values must be identical if their corresponding nodes are located at the same position (see \fig{fig-eqclass}). Finally, \emph{the \ac{dof} value $i \in \lagesphh'|_g$ is constrained by the \ac{dof} values of the coarse cell through the row $i'$ of $\boldsymbol{R}^{s(g)}$}.
%\end{enumerate}

\begin{figure}[t!]
    \centering
    \begin{subfigure}[t]{0.25\textwidth}
        \includegraphics[width=\textwidth]{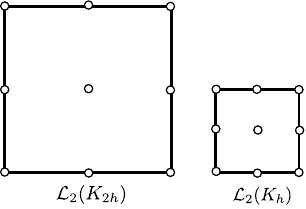}
        \caption{FE spaces representation.}
        \label{fig-or_fespace}
    \end{subfigure}
    \hspace{0.5cm}
    \begin{subfigure}[t]{0.32\textwidth}
        \includegraphics[width=\textwidth]{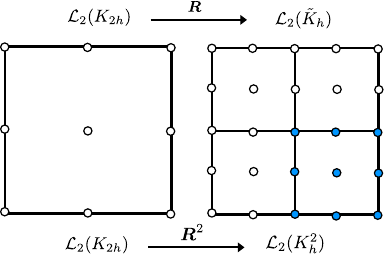}
        \caption{A function in the coarse cell injected in the refined space  or its restriction to a subcell (denoted in blue).}
        \label{fig-Roperator}
    \end{subfigure}
       \hspace{0.5cm}
    \begin{subfigure}[t]{0.25\textwidth}
        \includegraphics[width=\textwidth]{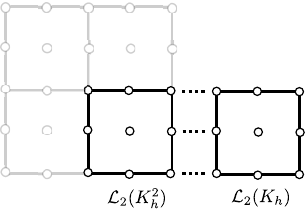}
        \caption{\acp{dof} identification as conforming meshes.}
        \label{fig-eqclass}
    \end{subfigure}
    \caption{Scheme for enforcing continuity at common entities for two cells with different level of refinement with second order Lagrangian \acp{fe}.}
    \label{fig-dofs_scheme}
\end{figure}

%The continuity of \acp{dof} defines a continuous solution across the \emph{hanging} entity $g$. In our implementation, we first identify the \emph{hanging} edges/faces and the coarser geometrical entities that contain them. Every (\emph{hanging} or \emph{non-hanging}) geometrical object $g$ is assigned to one cell in $\mathcal{T}_g$. Next, for every hanging geometrical entity and the previous information, we can compute the constraints of its own \acp{dof}. Note that such computation is performed only once per hanging geometrical entity.
%On the other hand, the restriction operator is independent of the level of refinement and the cell in the physical space; it is computed only once at the reference cell.

\subsubsection{Constraints for edge \acp{fe}} In order to compute the constraints for edge \acp{fe} one could follow a similar procedure as the one above, see \fig{fig-edge_dofs_scheme} for an illustration. Nevertheless, we propose a different approach for the implementation of restriction operator in edge \acp{fe} (see \fig{fig-Redge}), which allows us to reuse the restriction operator defined for nodal \acp{fe} and avoids the evaluation of edge moments in subcells. Besides, our approach for computing the edge \ac{fe} constraints is applicable to \emph{any} \ac{fe} space that relies on a {pre-basis} of Lagrangian polynomials plus a change of basis (e.g., Raviart-Thomas \acp{fe}) without \emph{any} additional implementation effort.

The idea is to build the restriction operator for edge \acp{fe} as a composition of operators. We recall the change of basis matrix $\boldsymbol{Q}: \lagespk(K) \rightarrow  \nedespk(K)$ between Lagrangian and edge \ac{fe} basis functions (see \sect{subsec-ned_shapes}).
%The equivalence, not in shape functions but in the sense of \acp{dof}, is immediate. Given $\uu \in \nedespk(K)$, with the use of Einstein's notation, we can write
%\begin{eqnarray}
%\uu = u^i \phi^i = u^i Q_{ij} \varphi^j = \varphi^j Q_{ji}^T u^i,
%\end{eqnarray}
%where the interplay of spaces becomes clear;
It leads to the adjoint operator $\boldsymbol{Q}^T: \nedespk(K)' \rightarrow \lagespk(K)'$  %Similarly, given $\uu_h \in \lagespk(K)$, we can write
%\begin{eqnarray}
%\uu = u^i \varphi^i = u^i Q_{ij}^{-1} \phi^j = \phi^j Q_{ji}^{-T} u^i,
%\end{eqnarray}
%which leads to
and its inverse $\boldsymbol{Q}^{-T}: \lagespk(K)' \rightarrow \nedespk(K)'$.
As a result, given a subcell $K_h^s$, we can define the restriction operator $ \hat \R^s \doteq \boldsymbol{Q}^{-T} \R^s \boldsymbol{Q}^T :  \nedesp2hk' \rightarrow \nedesphk'$ that takes the edge \ac{fe} \ac{dof} values in the coarse cell $K_{2h}$ and provides the ones of the interpolated function (see \eqref{eq:interpolant}) in $K_h^s$. Again, $\hat \R^s$ can be computed once at the reference cell. An illustration of the sequence of spaces is shown in \fig{fig-Ropedge}.

Given a hanging edge/face $g \in \mathcal{G}_\mathcal{T}^{\rm hg}$ (vertices do not have associated \acp{dof} for edge \acp{fe}), the constraints of its \acp{dof} are computed as follows. We can identify every \ac{dof} $i \in \nedesphh' |_g$ with a \ac{dof} $i' \in \mathcal{ND}_{{k}}(K_h^{s(g)})' |_g$ as for conforming meshes (see the equivalence class in Sect. \ref{sec:global_space} and \fig{fig-edge_dofeq}). As a result, \emph{the \ac{dof} value $i \in \nedesphh'|_g$ is constrained by the \ac{dof} values of the coarse cell through the row $i'$ of $\hat \R^{s(g)}$.}

\begin{figure}[t!]
    \centering
    \begin{subfigure}[t]{0.26\textwidth}
        \includegraphics[width=\textwidth]{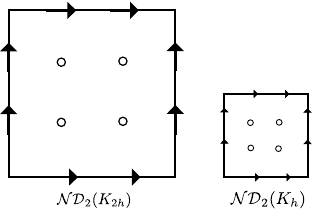}
        \caption{FE spaces representation.}
        \label{fig-edge_spaces}
    \end{subfigure}
    \hspace{0.5cm}
    \begin{subfigure}[t]{0.35\textwidth}
        \includegraphics[width=\textwidth]{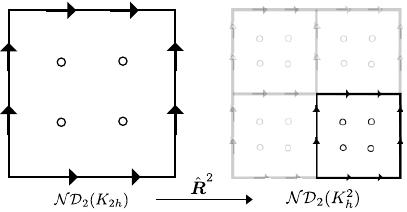}
        \caption{Restriction of a function injected in the refined space to a subcell.}
        \label{fig-Redge}
    \end{subfigure}
       \hspace{0.5cm}
    \begin{subfigure}[t]{0.26\textwidth}
        \includegraphics[width=\textwidth]{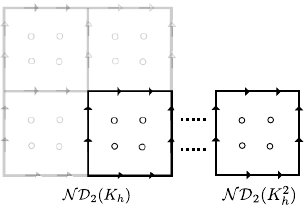}
        \caption{\acp{dof} identification as conforming meshes.}
        \label{fig-edge_dofeq}
    \end{subfigure}
    \caption{Scheme for enforcing continuity at common entities for two cells with different level of refinement with second order edge \acp{fe}.}
    \label{fig-edge_dofs_scheme}
\end{figure}

\begin{figure}[t!]
    \centering
   \includegraphics[width=0.7\textwidth]{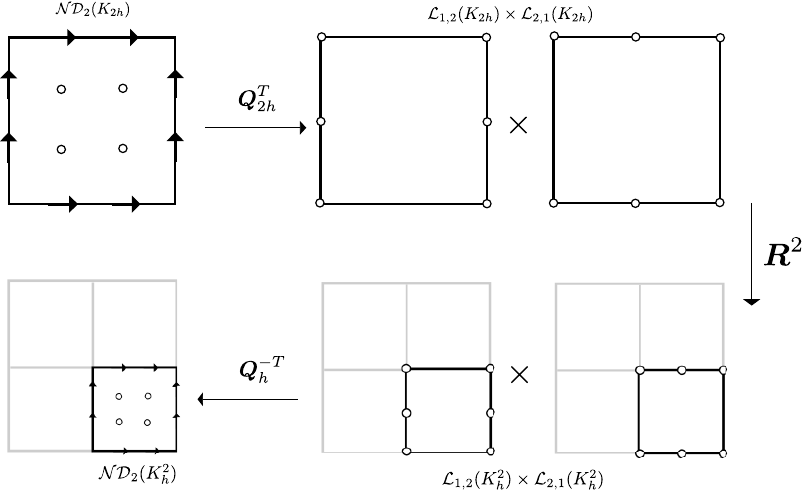}
    \caption{Sequence of spaces in order to compute the restriction operator entries for $K^2_{h}$ with second order edge \acp{fe}. Representation of edge and inner \acp{dof} by arrows and nodes, resp. }
    \label{fig-Ropedge}
\end{figure}

\subsubsection{Global assembly for non-conforming meshes}
The definition of the constraints is used in the assembly of the linear system as follows. First of all, let us write the problem \eq{eq-variational_form} in algebraic form. The solution $\uu \in \mathcal{ND}_k(\Omega)$ is expanded by $\{ \phi^a \}_{a=1}^{n}$. The elemental matrices are defined as $\Mm_{ij} = \int_K \beta \phi^j \cdot \phi^i$ and $\Km_{ij} = \int_K (\alpha \bsnabla \times \phi^j) \cdot (\bsnabla \times \phi^i)$, whereas the elemental right-hand side is the discrete vector $\fm_i = \int_K \fm_K \cdot \phi^i$ for $i,j \in \{1, \ldots, n\}$. The usual assembly for every $K\in\mathcal{T}_h$ is performed to obtain the global matrix and array, hence the algebraic form reads $\Am \uu = \f$, where $\Am=\Km + \Mm$. Let us now distinguish between the set of \emph{conforming} \acp{dof} $\uu_c$ and the set of \emph{non-conforming} \acp{dof} $\uu_{nc}$, whose cardinalities are $n_c
+n_{nc}=n$. One can write the constraints that $\uu_{nc}$ must satisfy in compact form, $\uu_{nc} = \boldsymbol{G} \uu_c $, where the entries of $\boldsymbol{G}$ (of dimension $n_{nc}\times n_c$) are given by the described procedure for obtaining the \emph{constraining} factors between the \emph{constrained} and the \emph{constraining} part. Then, the solution $\uu_h$, expressed in terms of both types of \acp{dof} can be written as
\begin{eqnarray}\label{eq-constrained_sol}
  \uu_h = \begin{bmatrix} \uu_c \\ \uu_{nc} \end{bmatrix} =
  \begin{bmatrix} \boldsymbol{I} \\ \boldsymbol{G} \end{bmatrix} \uu_c = \boldsymbol{P} \uu_c,
\end{eqnarray}
and we can write the constrained problem only in terms of the \emph{conforming} \acp{dof} $\uu_c$ as
\begin{eqnarray}
\bar{\boldsymbol{\mathcal{A}}} \uu_c &=& \bar{\boldsymbol{f}},
\end{eqnarray}
where $\bar{\boldsymbol{\mathcal{A}}} = \boldsymbol{P}^T \boldsymbol{\mathcal{A}} \boldsymbol{P}$ and $\bar{\boldsymbol{f}}=\boldsymbol{P}^T \boldsymbol{f}$. Our implementation directly builds the constrained operator $\bar{\boldsymbol{\mathcal{A}}}$ by \emph{locally} applying the constraints to eliminate $\uu_{nc}$ \acp{dof} using the constraints given by $\boldsymbol{G}$ in the assembly process (see~\cite{lssc_hadap_2018} for further details). Once the solution for \emph{conforming} \acp{dof} is obtained, \emph{hanging} \ac{dof} values are recovered by \eq{eq-constrained_sol} as a postprocess.

\section{Numerical experiments}\label{sec-results}
In this section, we test the implementation of edge elements and $H$(curl)-conforming spaces in \FEMPAR~\cite{fempar-web-page}, a general purpose, parallel scientific software for the \ac{fe} simulation of complex multiphysics problems governed by \acp{pde} written in Fortran200X following object-oriented principles. It supports several computing and programming environments, such as, e.g., multi-threading via OpenMP for moderate scale problems and hybrid MPI/OpenMP for HPC clusters. See~\cite{badia-fempar} for a thorough coverage of the software architecture of \FEMPAR, which is distributed as open source software under the terms of the GNU GPLv3 license. Regarding the content of this document, \FEMPAR supports arbitrary order edge \acp{fe} on both hexahedra and tetrahedra, on either structured and unstructured conforming meshes, and also mesh generation by adaptation using hierarchically refined octree-based meshes. In order to test our implementation, we will compare both theoretical and experimental convergence rates. Generally, log-log plots of the computed error in the considered norms ($L_2$-norm or $H$(curl)-norm) against different values of $h$ or number of \acp{dof} will be shown.
Let us first cite some \emph{a priori} error estimates for edge \acp{fe} of the first kind.   In the $H$(curl)-norm, we find the following optimal estimate, presented in~\cite{alonso_1999}:
\begin{theorem} If $\mathcal{T}_h$ is a regular family of triangulations on $\Omega$ for $h>0$, then there exists a constant $C>0$ such that
\begin{eqnarray}
\norm{\vv - {\pi}_h^k\vv}_{H({\rm curl})} \leq C h^{{\rm min}\{r,k\}} \norm{\vv}_{H^r({\rm curl})}, \label{eq-err_Hc}
\end{eqnarray}
for all $\vv \in H^r({curl})$, where $r>\frac{1}{2}$ determines the regularity of the function $\vv$, which is valid for $H$(curl)-conforming elements. If the function $\vv$ is smooth enough to have bounded derivatives such that $s>k$, then the estimate states that superlinear convergence can be achieved as we increase the polynomial order $k$.
\end{theorem}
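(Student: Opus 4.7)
The plan is to follow the classical three-step pattern for finite element interpolation estimates: (i) localize to a single cell, (ii) pull back to the reference cell via the Piola transform and invoke a Bramble--Hilbert type argument together with polynomial preservation of the local interpolant, (iii) scale back to the physical cell. The regularity hypothesis $r>\tfrac12$ will enter precisely when justifying that the local Nédélec interpolant is a bounded operator on $H^r(\mathrm{curl},K)$, since the edge and face moments in \eqref{eq-rhexnedge}--\eqref{eq-rtetninner} involve traces that are not defined on $H(\mathrm{curl})$ without extra regularity.

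First I would write, using the definition of $\pi_h^k$ cell-by-cell,
\begin{equation*}
\norm{\vv-\pi_h^k\vv}_{H(\mathrm{curl},\Omega)}^2 \;=\; \sum_{K\in\mathcal{T}_h}\norm{\vv-\pi_K^k\vv}_{H(\mathrm{curl},K)}^2,
\end{equation*}
so that it suffices to prove the local estimate $\norm{\vv-\pi_K^k\vv}_{H(\mathrm{curl},K)}\le C h_K^{\min\{r,k\}}\norm{\vv}_{H^r(\mathrm{curl},K)}$ with a constant independent of $K$, and then sum over $K\in\mathcal{T}_h$. The uniform character of the constant comes from the shape-regularity of the family $\{\mathcal{T}_h\}$, which ensures that $\norm{\jacobian}$, $\norm{\jacobian^{-1}}$ and $|\det\jacobian|$ scale as controlled powers of $h_K$.

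Next I would pull back to the reference cell. Using the covariant Piola map $\funmap_K$ from \eqref{eq-Piola_map}, together with the identity $\bsnabla\times\funmap_K(\hat\vv) = \tfrac{1}{\det\jacobian}\jacobian\,\hat\bsnabla\times\hat\vv$, the commuting diagram between $\pi_K^k$ and $\hat\pi^k$ that follows from \sect{subsec-phymoms} (the \acp{dof} in the physical space are constructed so that $\sigma_a\circ\funmap_K = \hat\sigma_a$) yields $\pi_K^k\vv = \funmap_K(\hat\pi^k\hat\vv)$ with $\hat\vv = \funmap_K^{-1}\vv$. Standard scaling estimates for the Piola transform then reduce the problem to bounding $\norm{\hat\vv-\hat\pi^k\hat\vv}_{H(\mathrm{curl},\hat K)}$ in terms of $|\hat\vv|_{H^{\min\{r,k\}}(\mathrm{curl},\hat K)}$. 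On the reference cell, one observes that $\hat\pi^k$ is a projector whose range contains $[\mathcal{P}_{k-1}(\hat K)]^d$ (since those are contained in $\esp_k(\hat K)$ by the definitions in \eqref{eq-ps_hex} and \eqref{eq-ps_tet}); combined with boundedness of $\hat\pi^k$ on $H^r(\mathrm{curl},\hat K)$ for $r>\tfrac12$, the Bramble--Hilbert lemma applied component-wise to $\hat\vv$ and to $\hat\bsnabla\times\hat\vv$ gives
\begin{equation*}
\norm{\hat\vv-\hat\pi^k\hat\vv}_{H(\mathrm{curl},\hat K)} \;\le\; C\,|\hat\vv|_{H^{\min\{r,k\}}(\mathrm{curl},\hat K)}.
\end{equation*}
Mapping back with Piola-scaling estimates of the form $|\hat\vv|_{H^s(\hat K)}\lesssim h_K^{s-d/2}\norm{\jacobian}\,|\vv|_{H^s(K)}$ and analogous bounds for the curl produces the advertised factor $h_K^{\min\{r,k\}}$.

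The main obstacle, and the point at which the hypothesis $r>\tfrac12$ is needed, is the continuity of $\hat\pi^k$ on $H^r(\mathrm{curl},\hat K)$. Edge moments \eqref{eq-tetnedge} use tangential traces on 1D curves, face moments \eqref{eq-tetneface} use tangential traces on 2D faces; for a generic field in $H(\mathrm{curl},\hat K)$ these integrals are meaningless. One must invoke the refined trace theory for $H(\mathrm{curl})$ (as in~\cite[Ch.\ 5]{monk_finite_2003}): when $\hat\vv\in H^r(\hat K)^d$ with $r>\tfrac12$, the tangential trace on each edge/face is well-defined and belongs to $L^p$ with $p$ large enough to pair against the polynomial test functions in the \ac{dof} definitions, yielding $|\hat\sigma(\hat\vv)|\le C\norm{\hat\vv}_{H^r(\mathrm{curl},\hat K)}$. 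Once this continuity is in place, the combination with the Bramble--Hilbert argument and the scaling closes the estimate and, summing over cells, delivers the global bound~\eqref{eq-err_Hc}.
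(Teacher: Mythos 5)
The paper does not actually prove this theorem: it is quoted verbatim as an \emph{a priori} estimate from the literature (Alonso--Valli, with the interpolation machinery referred back to~\cite[Ch.~5]{monk_finite_2003}), so there is no in-paper proof to compare against. Judged on its own, your sketch follows the standard route (cell-wise localization, covariant Piola pull-back with affine equivalence of the moments, Bramble--Hilbert on the reference cell, shape-regular scaling), and you correctly identify where $r>\tfrac12$ enters, namely the boundedness of the local interpolant, since edge/face moments are not defined on raw $H(\mathrm{curl})$ fields. That part is sound and consistent with the setting of the paper (affine maps, so the quadrilateral/hexahedral degradation of~\cite{falk_2011} is not an issue).

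The genuine gap is in your treatment of the curl part of the error. You propose to apply Bramble--Hilbert ``component-wise to $\hat\vv$ and to $\hat\bsnabla\times\hat\vv$'', but $\hat\pi^k$ does not act on $\hat\bsnabla\times\hat\vv$, so $\bsnabla\times(\hat\vv-\hat\pi^k\hat\vv)$ is not the error of any interpolant of $\hat\bsnabla\times\hat\vv$ as written, and polynomial reproduction of $[\mathcal{P}_{k-1}(\hat K)]^d$ by itself says nothing about which curls are reproduced. To close this step you need one of two ingredients: (i) the commuting-diagram property $\bsnabla\times\hat\pi^{k}_{\mathcal{ND}} = \hat\pi^{k}_{\mathcal{RT}}\,\bsnabla\times$, which converts the curl of the N\'ed\'elec error into a Raviart--Thomas (face-element) interpolation error of $\bsnabla\times\hat\vv$ and then lets you quote the $H(\mathrm{div})$-type estimate; or (ii) a Deny--Lions quotient argument carried out in the full graph norm $H^r(\mathrm{curl},\hat K)$, using that $I-\hat\pi^k$ annihilates all of $\esp_k(\hat K)$ (not just $[\mathcal{P}_{k-1}]^d$) and that the quotient seminorm controls both $|\hat\vv|_{H^{\min\{r,k\}}}$ and $|\hat\bsnabla\times\hat\vv|_{H^{\min\{r,k\}}}$; this is how the cited reference proceeds. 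Without one of these, the claimed rate $h^{\min\{r,k\}}$ for $\norm{\bsnabla\times(\vv-\pi_h^k\vv)}_{L^2}$ does not follow from the steps you give. A second, smaller caveat: the boundedness $|\hat\sigma(\hat\vv)|\le C\norm{\hat\vv}_{H^r(\mathrm{curl},\hat K)}$ for $\tfrac12<r\le 1$ is itself a nontrivial trace lemma (it uses that $\bsnabla\times\hat\vv\in H^r$ embeds into an $L^p$ space on which face functionals make sense); you gesture at it correctly, but it should be cited or proved rather than asserted.
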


For the $L_2(\Omega)$ approximation, the following convergence rate can be expected~\cite{nedelec_mixed_1980}.
\begin{theorem} If $\mathcal{T}_h$ is a regular family of triangulations on $\Omega$ for $h > 0$, then there exists a constant $C>0$ such that
\begin{eqnarray}
\norm{\vv - {\pi}_h^k\vv}_{L_2(\Omega)} \leq C h^{k} \norm{\vv}_{H^k(\Omega)}. \label{eq-err_L2}
\end{eqnarray}
\end{theorem}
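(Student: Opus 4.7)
The plan is to follow the standard scaling argument for affine-equivalent families of finite elements, combined with the Bramble--Hilbert lemma on the reference cell. I would first localize the global error by writing
\[
\norm{\vv - \pi_h^k \vv}_{L^2(\Omega)}^2 = \sum_{K \in \mathcal{T}_h} \norm{\vv - \pi_h^k \vv}_{L^2(K)}^2,
\]
so it suffices to prove the local estimate $\norm{\vv - \pi_h^k \vv}_{L^2(K)} \leq C\, h_K^k\, |\vv|_{H^k(K)}$ on each cell $K$; summing and using $h_K \leq h$ together with $|\vv|_{H^k(K)} \leq \norm{\vv}_{H^k(K)}$ then delivers the global estimate \eq{eq-err_L2}.

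On each $K$, I would pull back to the reference cell $\georef$ via the covariant Piola map $\funmap_K$ of \eq{eq-Piola_map}. The decisive property, by the very construction of the physical-space moments in \sect{subsec-phymoms}, is that $\funmap_K$ intertwines the interpolation operators: $\pi_h^k(\funmap_K \hat\vv) = \funmap_K(\hat\pi^k \hat\vv)$ for sufficiently regular $\hat\vv$. Setting $\hat\vv = \funmap_K^{-1}\vv$, the local error reduces to $\norm{\funmap_K(\hat\vv - \hat\pi^k \hat\vv)}_{L^2(K)}$. I would then invoke the standard Piola scalings
\[
\norm{\funmap_K \hat\w}_{L^2(K)} \leq C \norm{\jacobian^{-1}}\, |\det \jacobian|^{1/2}\, \norm{\hat\w}_{L^2(\georef)}, \qquad |\hat\vv|_{H^k(\georef)} \leq C \norm{\jacobian}^{k+1}\, |\det \jacobian|^{-1/2}\, |\vv|_{H^k(K)},
\]
together with the shape-regularity bounds $\norm{\jacobian} \leq C h_K$, $\norm{\jacobian^{-1}} \leq C h_K^{-1}$.

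The remaining reference-cell estimate $\norm{\hat\vv - \hat\pi^k \hat\vv}_{L^2(\georef)} \leq C |\hat\vv|_{H^k(\georef)}$ is exactly the setting for the Bramble--Hilbert lemma applied to the bounded linear map $I - \hat\pi^k$ on $H^k(\georef)$. The crucial input is polynomial reproduction: $\hat\pi^k$ is the identity on $\reffesp$, and in both the hexahedral and tetrahedral cases $[\mathcal{P}_{k-1}(\georef)]^d \subset \reffesp$ (recalled at the end of \sect{subsec-poltet}), so in particular $\hat\pi^k$ reproduces all polynomials in $[\mathcal{P}_{k-1}(\georef)]^d$, which is precisely the degree of invariance required for a $k$-th order seminorm bound. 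Assembling the pieces gives $\norm{\vv - \pi_h^k \vv}_{L^2(K)} \leq C \cdot h_K^{-1}\cdot h_K^{k+1}\, |\vv|_{H^k(K)} = C h_K^k |\vv|_{H^k(K)}$, as required.

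The main obstacle, as usual with vector-valued mapped elements, is the careful bookkeeping of the Piola transform under differentiation. Because $\vv = \jacobian^{-T}\hat\vv \circ \geomap_K^{-1}$, each derivative of $\vv$ acquires an extra $\jacobian^{-1}$ factor from the chain rule \emph{on top of} the constant $\jacobian^{-T}$ hidden inside the Piola definition, which is why the correct power of $\norm{\jacobian}$ in the seminorm scaling is $k+1$ rather than $k$. Tracking these powers against the $|\det \jacobian|^{\pm 1/2}$ factors and verifying that they collapse to exactly $h_K^k$ at the end is the error-prone step. A minor secondary issue is that $\hat\pi^k$ is not defined on all of $L^2(\georef)$, since edge and face moments require enough regularity to take traces; however the hypothesis $\vv \in H^k$ with $k\geq 1$ is comfortably above the threshold $H^{1/2+\varepsilon}$ and the interpolant is well defined throughout.
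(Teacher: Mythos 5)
The paper itself offers no proof of this statement: it is quoted as a known \emph{a priori} estimate, with a citation to N\'ed\'elec's original paper \cite{nedelec_mixed_1980}, so there is no in-paper argument to compare against. What you propose is the standard route found in the literature (e.g.\ \cite[Ch.~5]{monk_finite_2003}): cell-wise localization, pullback by the covariant Piola map, norm scalings under an affine geometrical map, and Bramble--Hilbert on the reference cell using the polynomial inclusion $[\mathcal{P}_{k-1}(\georef)]^d\subset\reffesp$. For this part your bookkeeping is right: the commuting property $\pi^k_h\circ\funmap_K=\funmap_K\circ\hat\pi^k$ holds by the very construction of the physical moments in the paper, the exponent $k+1$ on $\norm{\jacobian}$ (the $k$ chain-rule factors plus the constant $\jacobian^{T}$ inside the pullback) is correct, and the powers collapse to $h_K^{k}$ under shape regularity. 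Note that this computation uses a \emph{constant} Jacobian, i.e.\ it relies on the affinity of the geometrical maps, which is precisely the restriction the paper imposes on its hexahedral meshes.

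The genuine gap is in your closing remark, where you dismiss the well-definedness of $\hat\pi^k$ on the grounds that $H^k$ with $k\geq 1$ lies ``comfortably above the threshold $H^{1/2+\varepsilon}$''. That threshold governs traces on faces, but the edge moments are line integrals of the tangential component over one-dimensional edges, i.e.\ over sets of codimension two in 3D, and neither $H^{1/2+\varepsilon}(\georef)^3$ nor $H^{1}(\georef)^3$ provides such traces. Consequently $I-\hat\pi^1$ is \emph{not} a bounded operator on $H^1(\georef)^3$, and the Bramble--Hilbert step fails exactly in the lowest-order case $k=1$ in 3D. The classical repair (see \cite[Ch.~5]{monk_finite_2003}, in particular the lemma establishing well-posedness of the interpolant) makes sense of the edge functionals by integrating by parts on the faces, which requires additional control of $\bsnabla\times\vv$ (for instance in $L^p$ with $p>2$, or $\vv$ in a space of the type $H^{1/2+\delta}$ with curl in $H^{1/2+\delta}$), and the resulting estimate then carries that extra regularity on the right-hand side; this is also why the statement is usually given under such hypotheses in the cited literature. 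For $k\geq 2$ the embedding $H^k(\georef)\hookrightarrow C^0(\overline{\georef})$ in three dimensions makes every moment classically defined and bounded, and your argument goes through as written.
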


%In addition, if $\uu \in {( H^{\frac{1}{2}+\delta}(\Omega))}^3$, for $0 < \delta \leq \frac{1}{2}$ and $\bsnabla \times \uu |_K \in D_k$, where $D_k$ is the vector space used for div-conforming \acp{fe}, $D_k = (P_{k-1})^3 \oplus \x \tilde{P}_{k-1}$ for tetrahedra or $D_k = Q_{k,k-1,k-1} \times Q_{k-1,k,k-1} \times Q_{k-1,k-1,k}$ for hexahedra, then the following estimate holds element by element~\cite{monk_finite_2003}
%
%\begin{eqnarray}
%\norm{\vv - {\pi}_h^k\vv}_{L_2(K)} \leq C \left( h_K^{\frac{1}{2}+\delta} \norm{\vv}_{H^{\frac{1}{2}+\delta}(K)} + h_K \norm{\bsnabla \times \vv}_{L_2(K)} \right). \label{eq-err_L2monk}
%\end{eqnarray}
%
%In particular, if we choose an analytical function such that $\bsnabla \times \vv = 0$, as it will be the case in the study of $h$-adaptive schemes, element-wise contributions of the estimate \eq{eq-err_L2monk} can be added to obtain
%
%\begin{eqnarray}
%\norm{\vv - {\pi}_h^k\vv}_{L_2(\Omega)} \leq C h^{\frac{1}{2}+\delta} \norm{\vv}_{H^{\frac{1}{2}+\delta}(\Omega)}. \label{eq-err_Lhmonk}
%\end{eqnarray}

In general, we choose analytical functions that do not belong to the \ac{fe} space. We show convergence plots with respect to the mesh size for uniform mesh refinement or the number of \acp{dof} for $h$-adaptive mesh refinement. In all cases, we will solve the reference problem \eq{eq-dcurl_p} with homogeneous, scalar parameters $\alpha=\beta=1$. Unless otherwise stated, the results are computed in the unit box domain $\Omega:=[0,1]^d$. We utilize the method of \emph{manufactured} solutions, i.e., to plug an analytical function $\uu^*$ in the exact form of the problem and obtain the corresponding source term that verifies the equation. Then, one can solve the problem for the unknown $\uu$ so that the computed solution must converge to the exact solution with mesh refinement.

\subsection{Uniform mesh refinement}
In this section, the experimental rate of convergence will be numerically included in the legend as the value for the slope computed with the two last available data points in each plot. We will make use of the following analytical function and source term for all the 2D cases presented in this section:
\begin{eqnarray*}
\uu^* = \begin{bmatrix}
\cos(\pi x_1 ) \cos(\pi x_2) \\
\sin(\pi x_1 ) \sin(\pi x_2)
\end{bmatrix},  & &
\f = (2\pi^2 + 1) \uu^*,
\end{eqnarray*}
whereas in 3D cases the analytical function and corresponding source term are given by
\begin{eqnarray*}
\uu^* = \begin{bmatrix}
\cos(\pi x_1) \cos(\pi x_2) \\
\sin(\pi x_2) \sin(\pi x_3) \\
\cos(\pi x_1) \cos(\pi x_3)
\end{bmatrix}, & &
\f = (\pi^2 +1)\uu^* + \pi^2 \begin{bmatrix}
\sin(\pi x_1) \sin(\pi x_3) \\
\sin(\pi x_1) \sin(\pi x_2) \\
\cos(\pi x_2) \cos( \pi x_3)
\end{bmatrix}.
\end{eqnarray*}
Dirichlet boundary conditions are strongly imposed over the entire boundary, i.e., $\partial \Omega_D := \partial \Omega$, where we enforce the tangent component of the analytical function $\uu^*_\tau$.

\begin{figure}[t!]
    \centering
    \begin{subfigure}[t]{0.45\textwidth}
        \includegraphics[width=\textwidth]{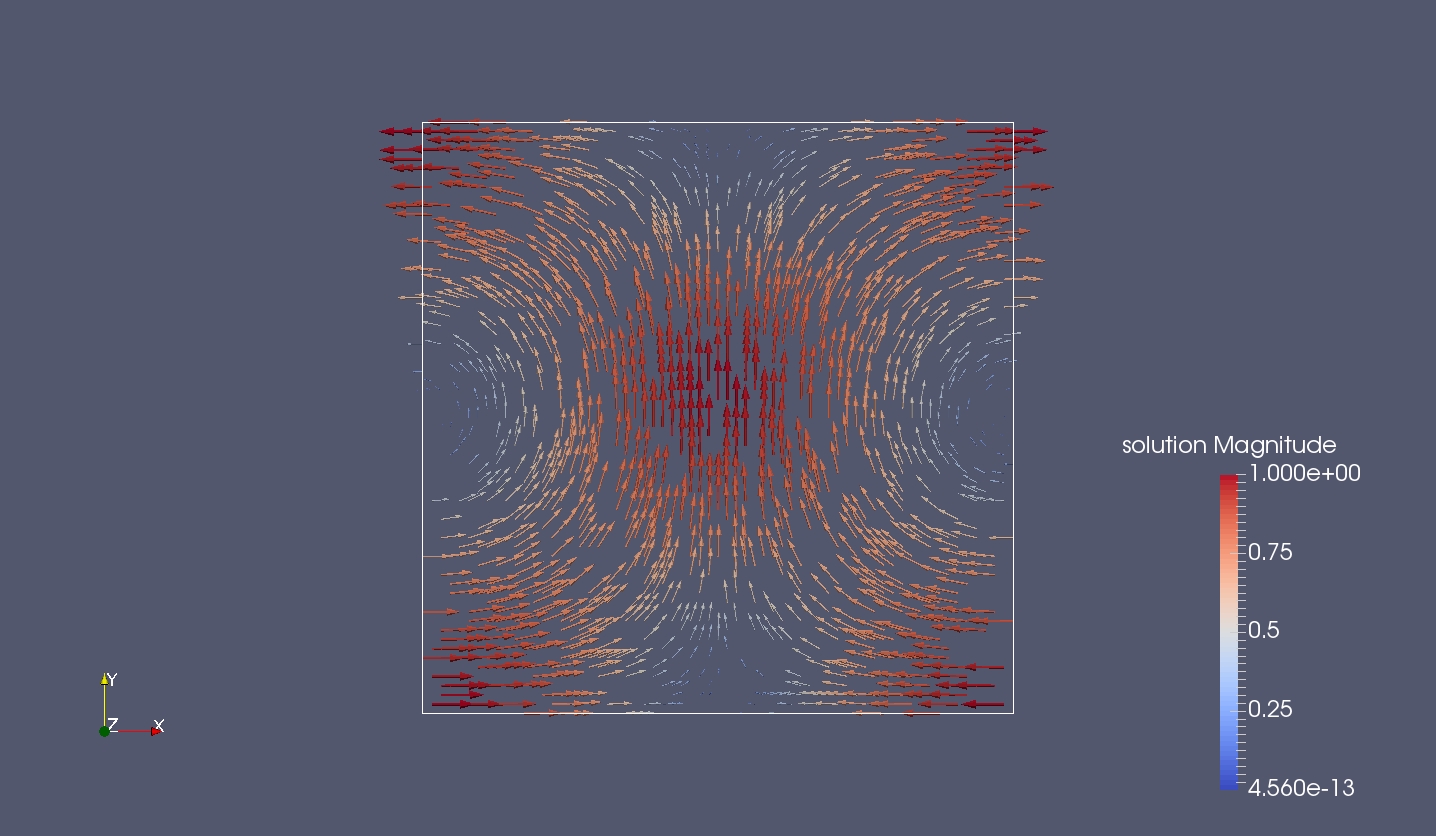}
        \caption{ Analytical function $\uu^*$ in the 2D case. }
        \label{fig-2D_sol}
    \end{subfigure}
    \begin{subfigure}[t]{0.45\textwidth}
        \includegraphics[width=\textwidth]{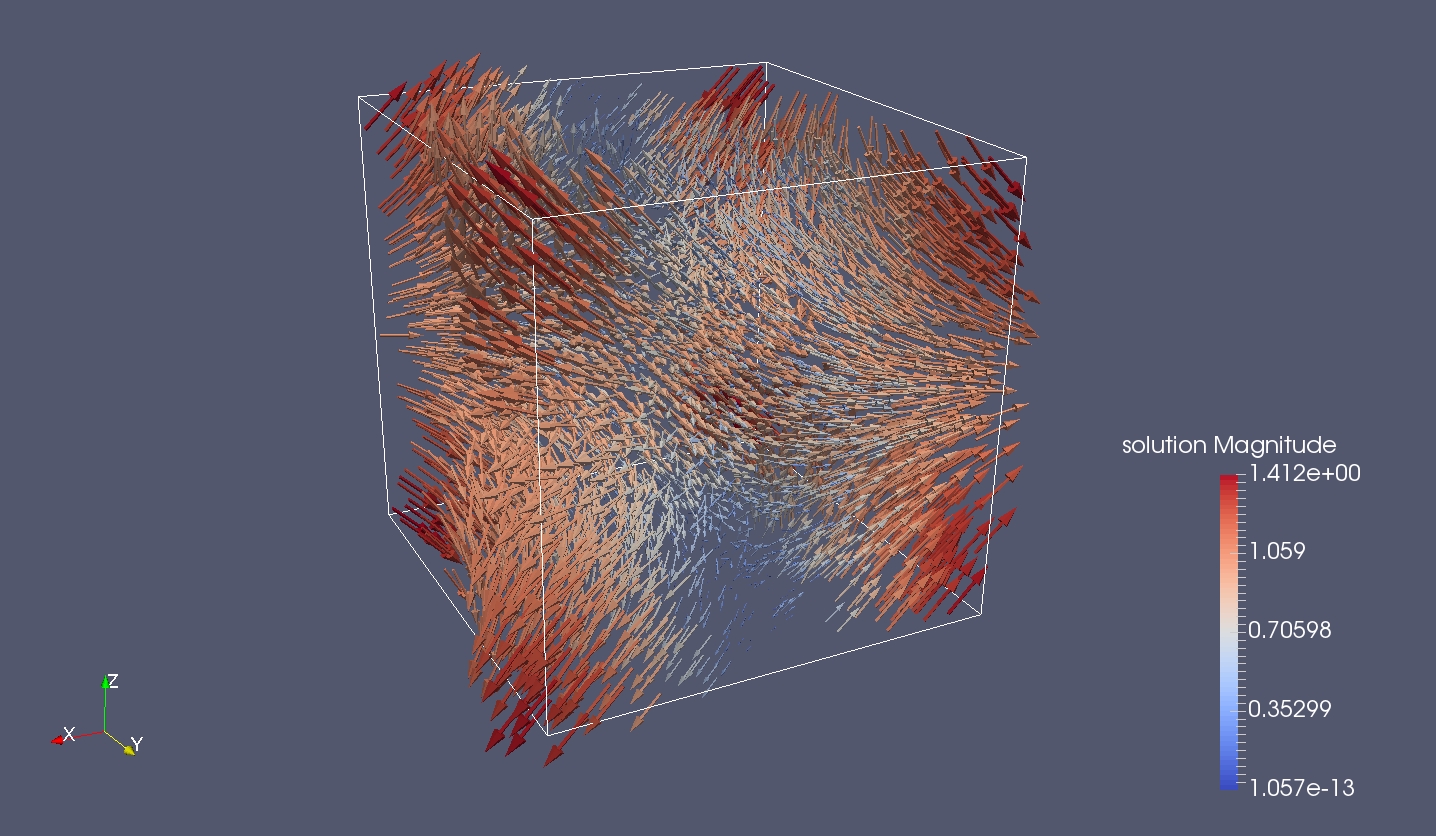}
        \caption{ Analytical function $\uu^*$ in the 3D case. }
        \label{fig-3D_sol}
    \end{subfigure}
    \caption{Analytical functions.}\label{fig-anasol}
\end{figure}

\subsubsection{Hexahedral meshes}

Structured simulations are performed with a structured mesh on the unit box domain with the same number of elements $n_K$ in each direction. Figs.~\ref{fig-err_2D_hex} and \ref{fig-err_3D_hex} show the convergence rates with the order of the element $k$ for 2D and 3D cases, resp. In all cases, the expected convergence ratio (see (\ref{eq-err_Hc}) and (\ref{eq-err_L2})) is achieved, presented up to $k=6$ in 2D and $k=4$ in 3D. %Recall that face and interior \acp{dof} occur from the second order \ac{fe} onwards.

\begin{figure}[t!]
    \centering
    \begin{subfigure}[t]{0.45\textwidth}
        \includegraphics[width=\textwidth]{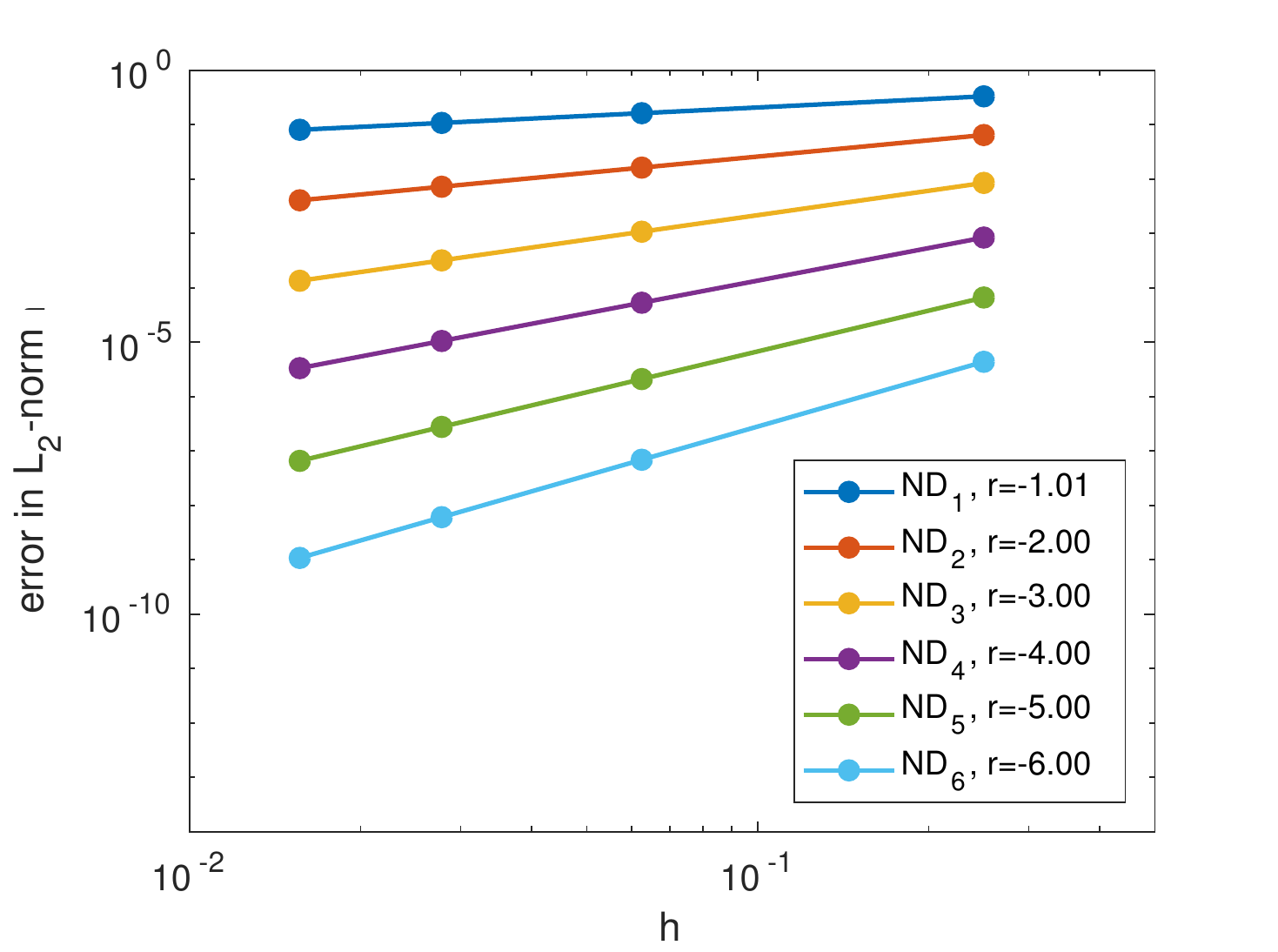}
        \caption{ $L_2$-norm of the error. }
        \label{fig-l2error_hex_2D}
    \end{subfigure}
    \begin{subfigure}[t]{0.45\textwidth}
        \includegraphics[width=\textwidth]{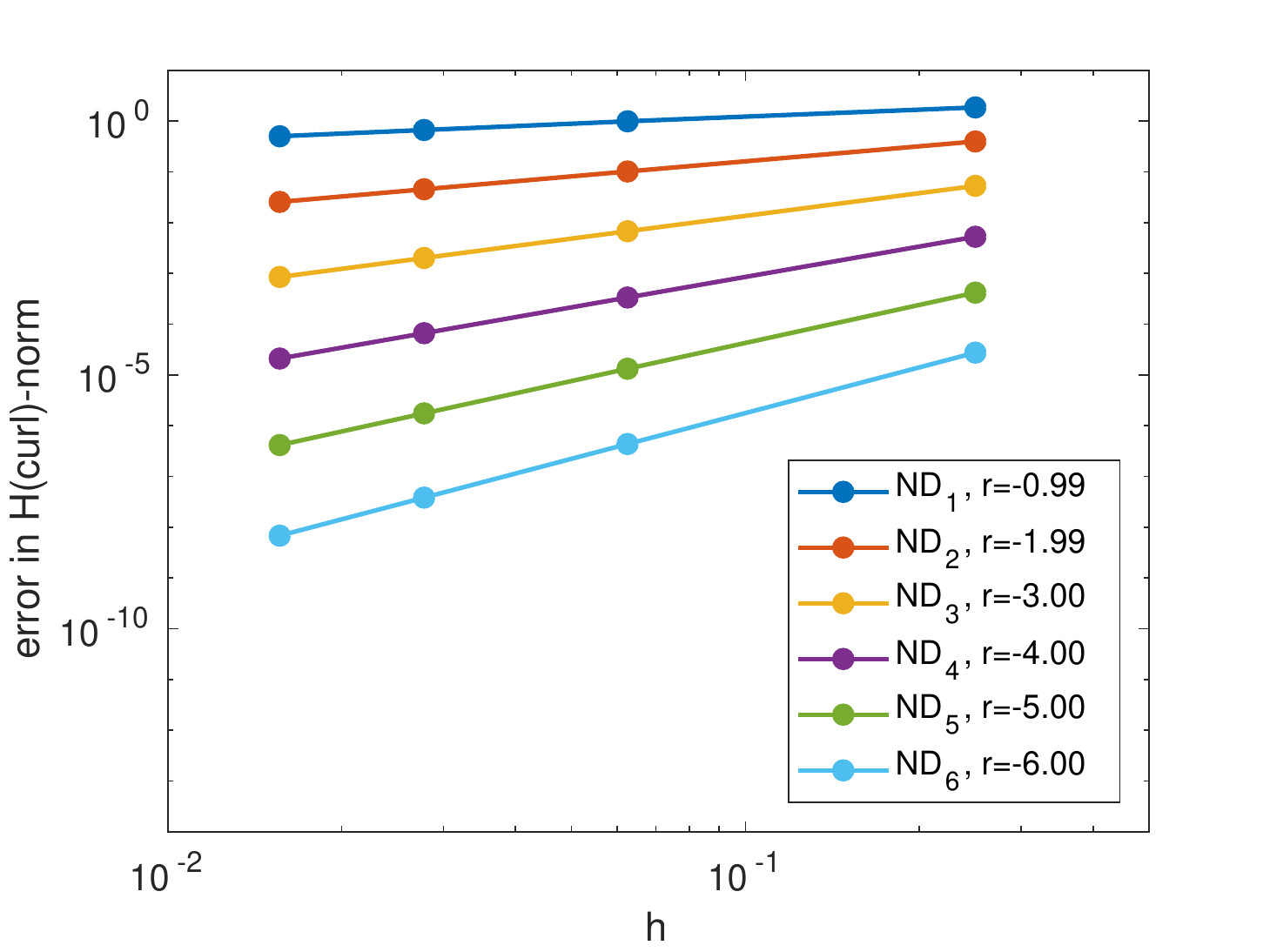}
        \caption{$H$(curl)-norm of the error. }
        \label{fig-Hcurl_error_hex_2D}
    \end{subfigure}
    \caption{Error norms for different orders 2D hexahedral edge \acp{fe}.}\label{fig-err_2D_hex}
\end{figure}

\begin{figure}[t!]
    \centering
    \begin{subfigure}[t]{0.45\textwidth}
        \includegraphics[width=\textwidth]{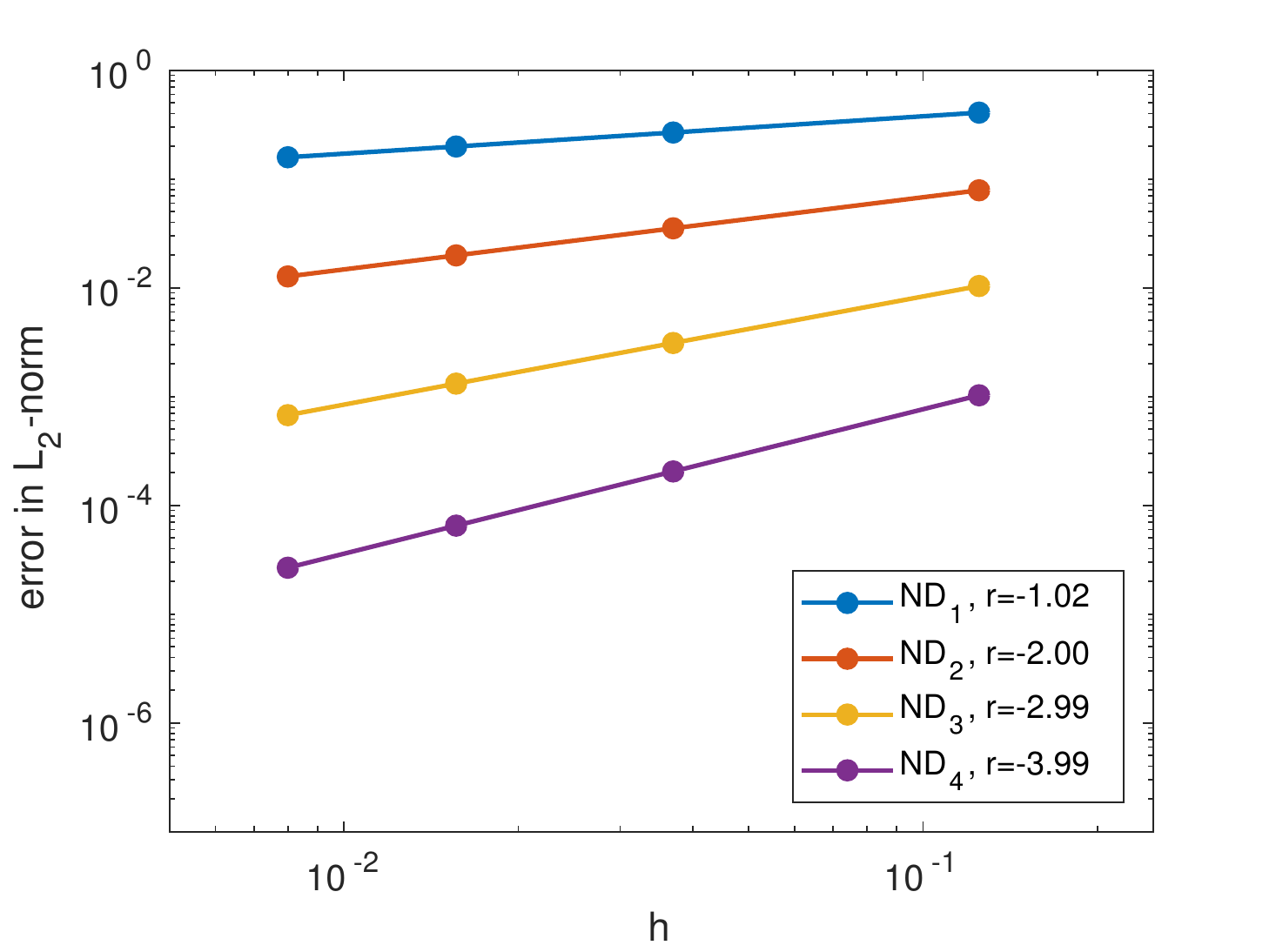}
        \caption{ $L_2$-norm of the error. }
        \label{fig-l2error_hex_3D}
    \end{subfigure}
    \begin{subfigure}[t]{0.45\textwidth}
        \includegraphics[width=\textwidth]{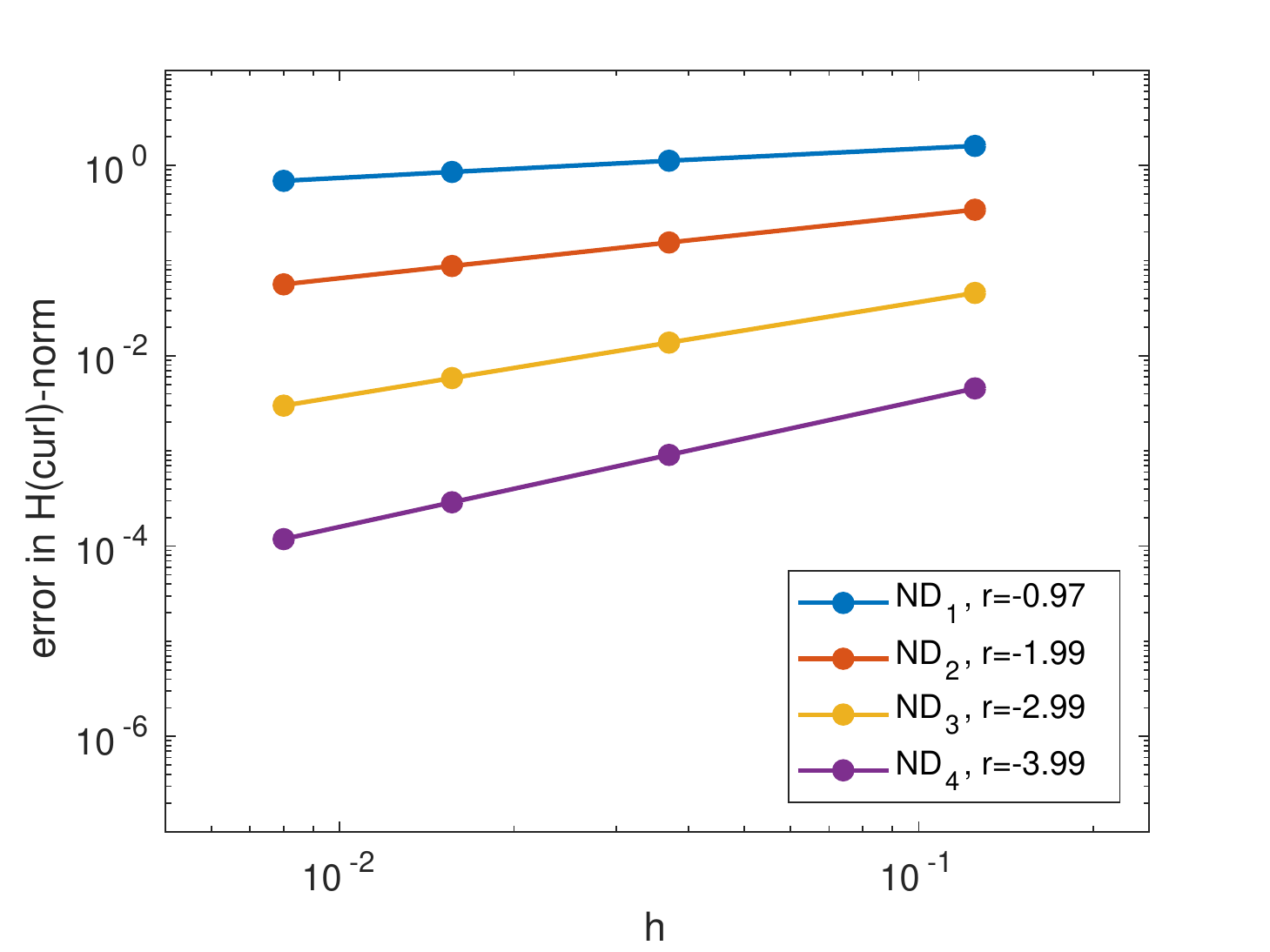}
        \caption{$H$(curl)-norm of the error. }
        \label{fig-Hcurl_error_hex_3D}
    \end{subfigure}
    \caption{Error norms for different orders 3D hexahedral edge \acp{fe}.}\label{fig-err_3D_hex}
\end{figure}

\subsubsection{Tetrahedral meshes}
Edge \acp{fe} are tested in this section with tetrahedral mesh partitions of the domain $\Omega$. Consider a family of tetrahedral meshes $\{\mathcal{T}_m \}_{m=1}^M$, obtained by structured, hexahedral meshes plus triangulation of hexahedral cells. Here the element size denotes the usual definition $h = \max_{K\in T_m} h_K$, being $h_K$ the diameter of the largest circumference or sphere containing $K$ for 2D and 3D, resp. Convergence results in Figs. \ref{fig-err_2D_tet} and \ref{fig-err_3D_tet} are computed with this family of meshes. In all cases, computed convergence ratios are consistent with the estimates in~(\ref{eq-err_Hc}) and (\ref{eq-err_L2}).
\begin{figure}[t!]
    \centering
    \begin{subfigure}[t]{0.45\textwidth}
        \includegraphics[width=\textwidth]{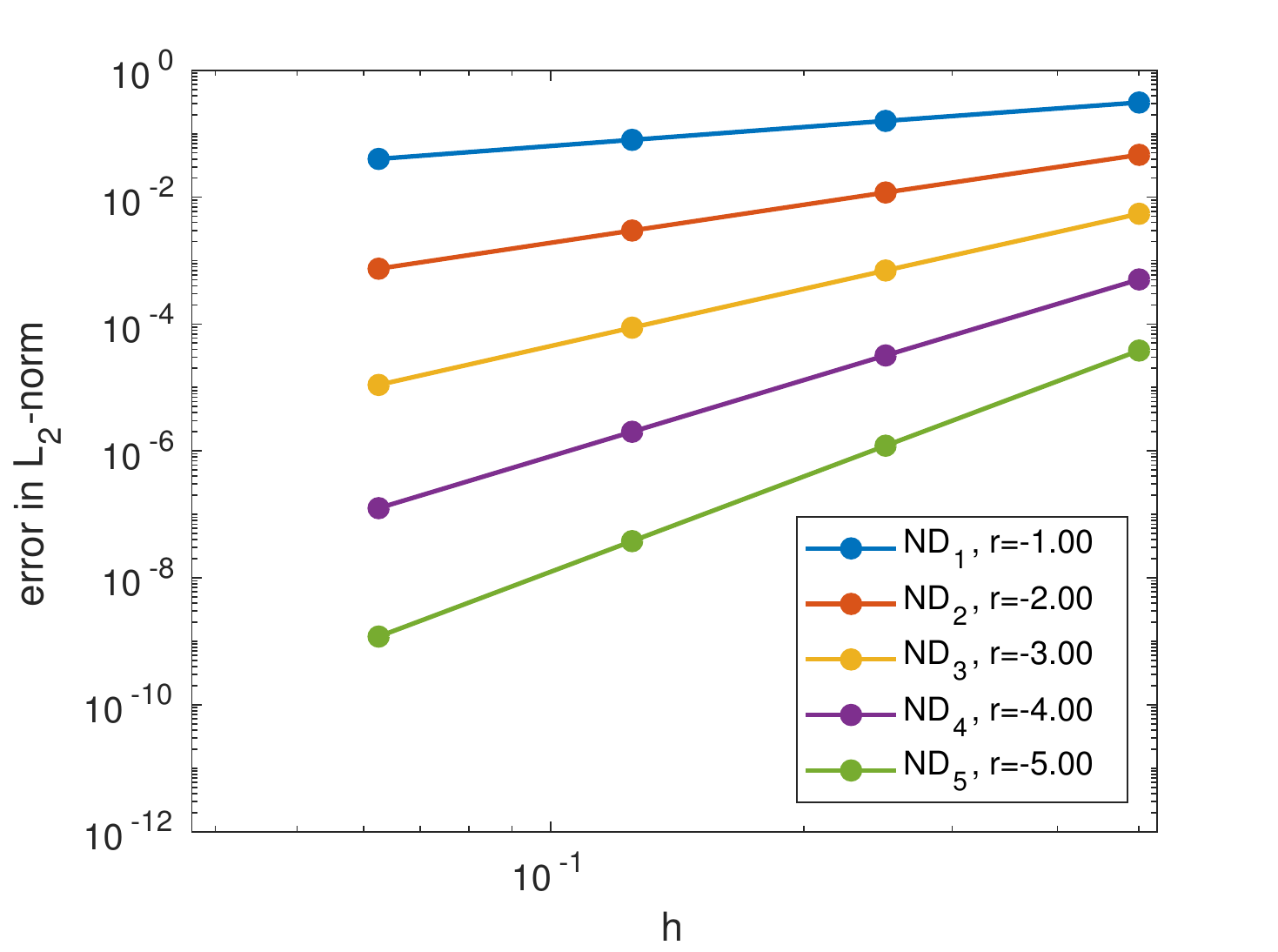}
        \caption{ $L_2$-norm of the error. }
        \label{fig-l2error_tet_2D}
    \end{subfigure}
    \begin{subfigure}[t]{0.45\textwidth}
        \includegraphics[width=\textwidth]{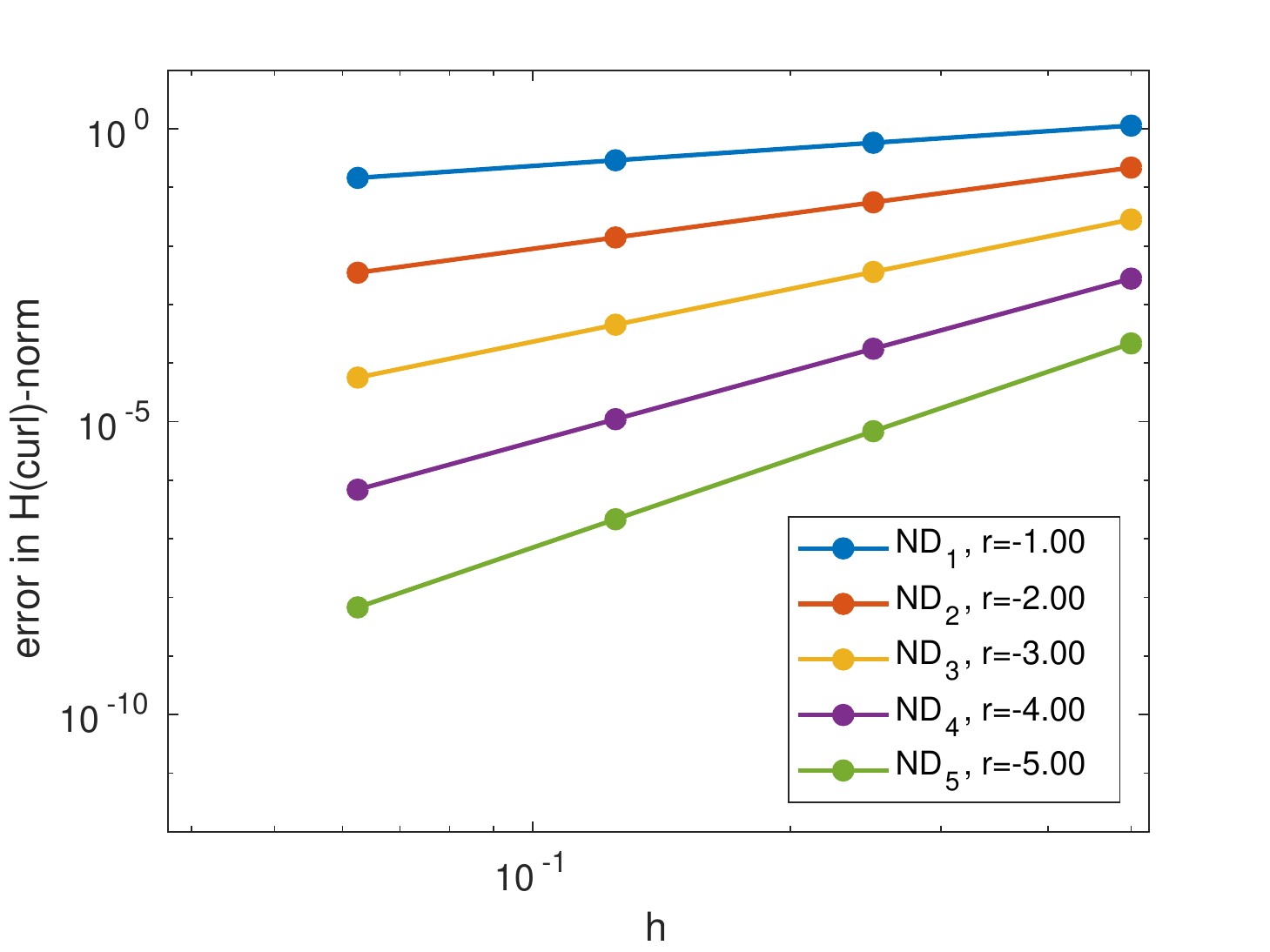}
        \caption{$H$(curl)-norm of the error. }
        \label{fig-Hcurl_error_tet_2D}
    \end{subfigure}
    \caption{Error norms for different orders 2D tetrahedral edge \acp{fe}.}\label{fig-err_2D_tet}
\end{figure}

\begin{figure}[t!]
    \centering
    \begin{subfigure}[t]{0.45\textwidth}
        \includegraphics[width=\textwidth]{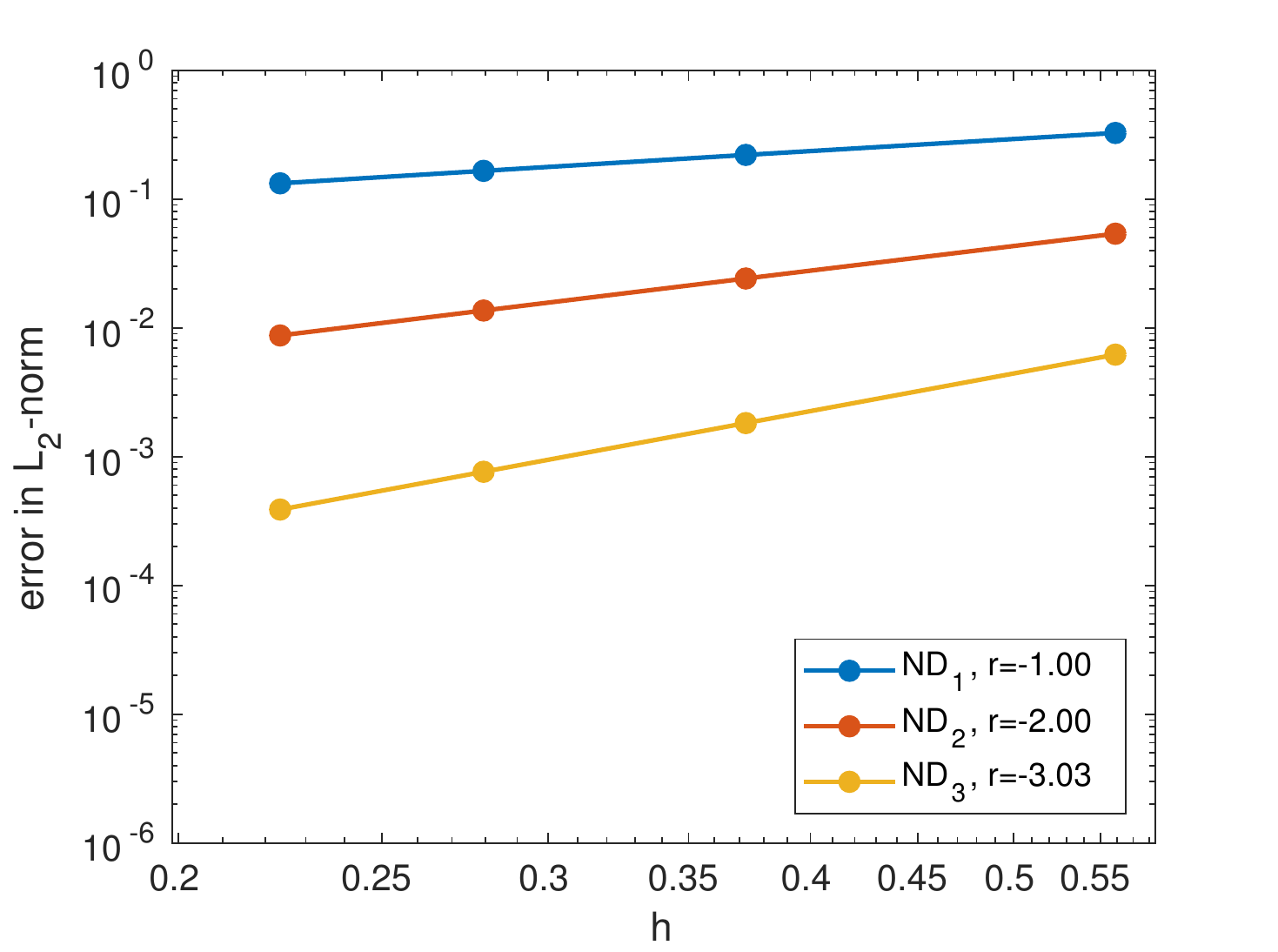}
        \caption{ $L_2$-norm of the error. }
        \label{fig-l2error_tet_3D}
    \end{subfigure}
    \begin{subfigure}[t]{0.45\textwidth}
        \includegraphics[width=\textwidth]{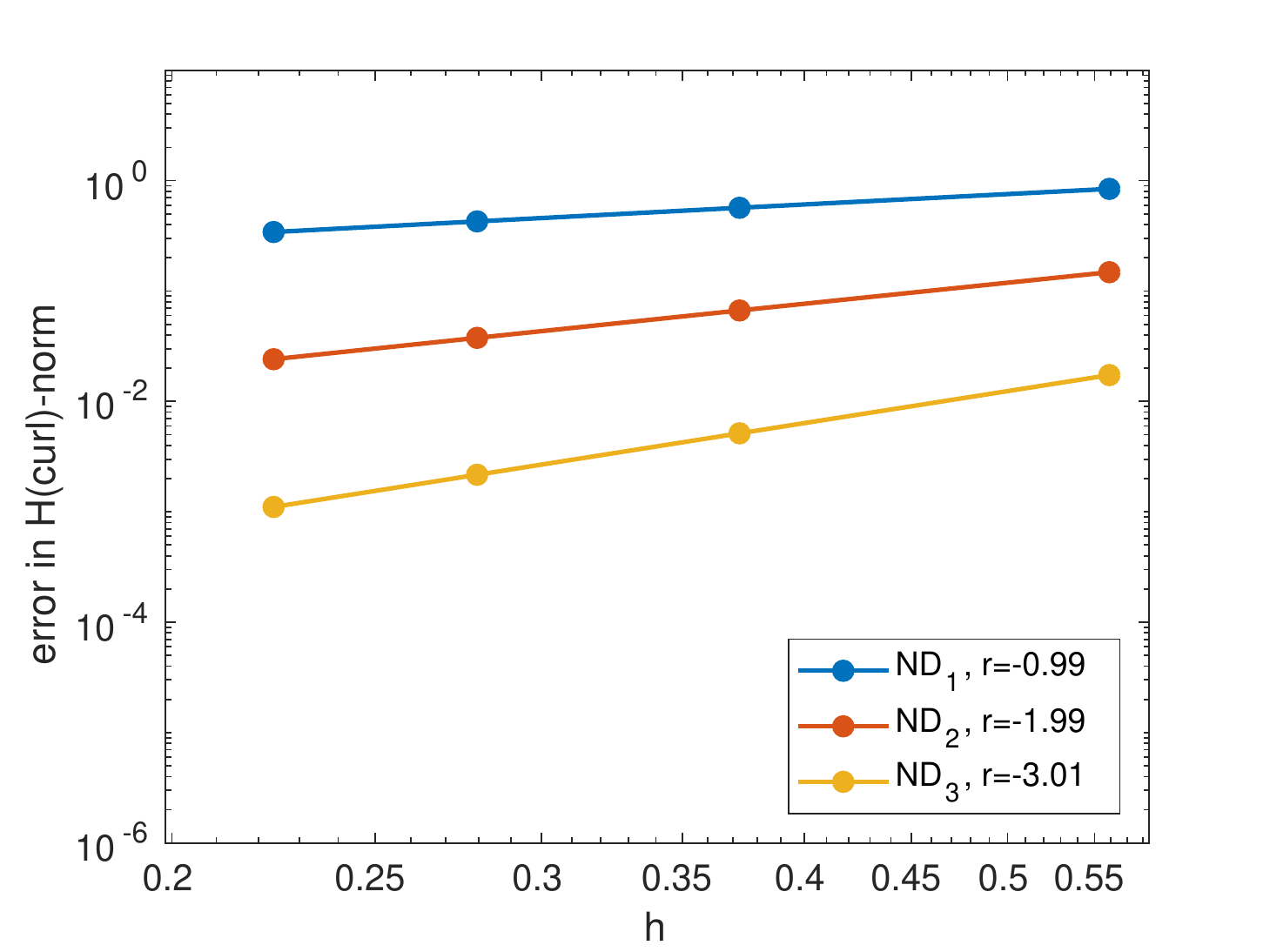}
        \caption{$H$(curl)-norm of the error. }
        \label{fig-Hcurl_error_tet_3D}
    \end{subfigure}
    \caption{Error norms for different orders 3D tetrahedral edge \acp{fe}.}\label{fig-err_3D_tet}
\end{figure}

\subsection{$h$-adaptive mesh refinement}
In this section, we present results for meshes obtained by adaptive refinement from an initial structured, hexahedral \emph{conforming} mesh. The refinement process  follows the usual steps: 1) solve the problem on a given mesh, 2) compute an estimation of the local error contribution at every cell using the solution computed at the previous step, 3) mark the cells with more error for refinement, and 4) refine the mesh and restart the process if the stopping criterion is not fulfilled. For comparison purposes, we will also consider a uniform mesh refinement. Then, log-log convergence plots for the ($L_2$ or $H$(curl)-) error against the number of free \acp{dof} involved in the simulation is presented for analytical solutions that contain a singularity in a re-entrant corner. \mo{We provide numerical results for problems where the analytical solution is known \emph{a priori}, thus we can make use of the true error. In any case, the implementation of the \emph{a posteriori} error estimators enumerated in Sect.~\ref{sec:intro} does not pose any extra implementation challenge to what it is already exposed in the paper.}
%\begin{figure}[t!]
%    \centering
%    \begin{subfigure}[t]{0.45\textwidth}
%        \includegraphics[width=\textwidth]{}
%        \caption{ $L_2$-norm of the error. }
%        \label{fig-l2error_h_2D}
%    \end{subfigure}
%    \begin{subfigure}[t]{0.45\textwidth}
%        \includegraphics[width=\textwidth]{}
%        \caption{$H$(curl)-norm of the error. }
%        \label{fig-Hcurl_error_h_3D}
%    \end{subfigure}
%    \caption{Error norms for different orders 2D hexahedral edge \acp{fe} in an $h$-adaptive scheme. }\label{fig-err_2D_h}
%\end{figure}
%
%
%\begin{figure}[t!]
%    \centering
%    \begin{subfigure}[t]{0.45\textwidth}
%        \includegraphics[width=\textwidth]{}
%        \caption{ $L_2$-norm of the error. }
%        \label{fig-l2error_hex_h3D}
%    \end{subfigure}
%    \begin{subfigure}[t]{0.45\textwidth}
%        \includegraphics[width=\textwidth]{}
%        \caption{$H$(curl)-norm of the error. }
%        \label{fig-Hcurl_error_hex_h3D}
%    \end{subfigure}
%    \caption{Error norms for different orders 3D hexahedral edge \acp{fe} in an $h$-adaptive scheme.}\label{fig-err_3D_h}
%\end{figure}

Let us consider a L-shaped domain, $\Omega=[-1,1]^2 \setminus ([0,1]\times [-1,0])$ with Dirichlet boundary conditions imposed over the entire domain boundary. The source term $\f$ is such that the solution in polar coordinates $(r,\theta)$ is
\begin{eqnarray}
\uu = \bsnabla \left( r^{\frac{2n}{3}} \sin \left( \frac{2n}{3} \theta \right) \right).
\end{eqnarray}
The chosen analytical function $\uu$ has a singular behaviour at the origin of coordinates for $n=1$, which prevents the function to be in $H^1(\Omega)$. Larger values of $n$ lead to smoother solutions. The regularity of the solution is well studied~\cite{nicaise_2001}, and theoretical convergence rates \eq{eq-err_Hc} are bounded by ${\rm min}(2n/3, k)$.

\begin{figure}[t!]
    \centering
    \begin{subfigure}[t]{0.45\textwidth}
        \includegraphics[width=\textwidth]{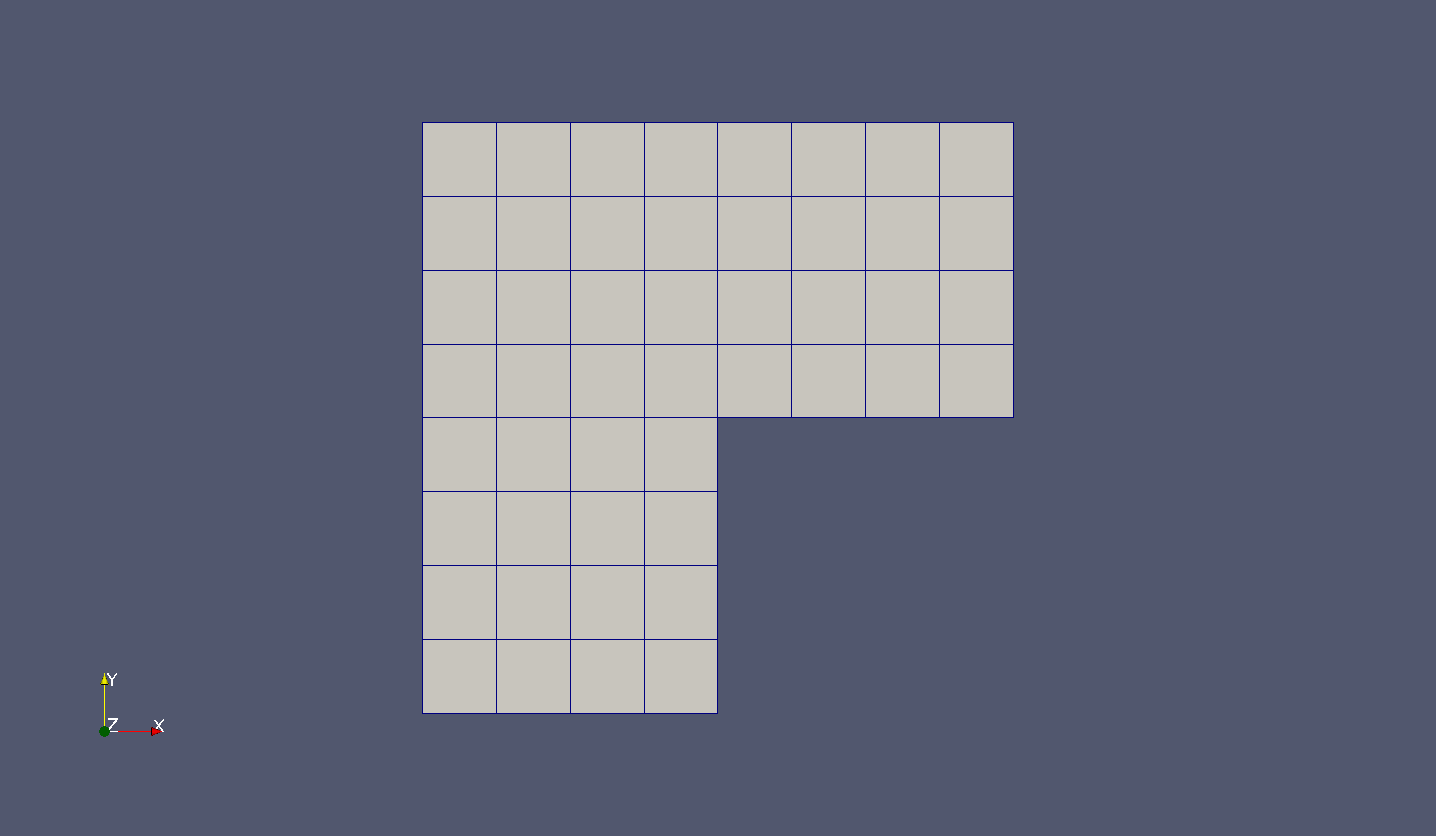}
        \caption{ L-shaped domain with initial mesh. }
        \label{fig-lshaped_2D}
    \end{subfigure}
    \begin{subfigure}[t]{0.45\textwidth}
        \includegraphics[width=\textwidth]{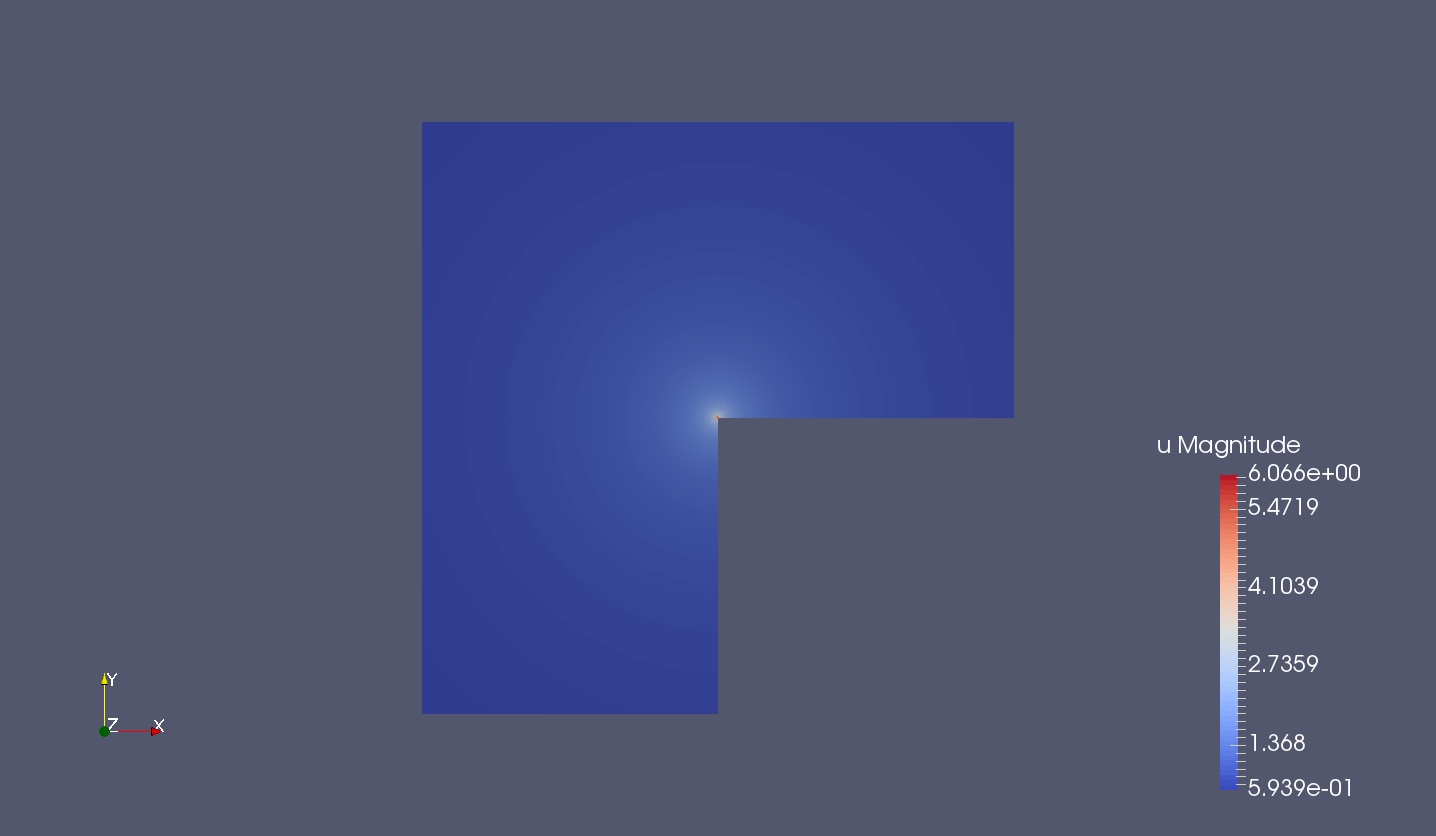}
        \caption{Analytical solution for the 2D L-shape domain with a singularity at the corner. }
        \label{fig-fichera2d_sol}
    \end{subfigure}

    \begin{subfigure}[t]{0.45\textwidth}
        \includegraphics[width=\textwidth]{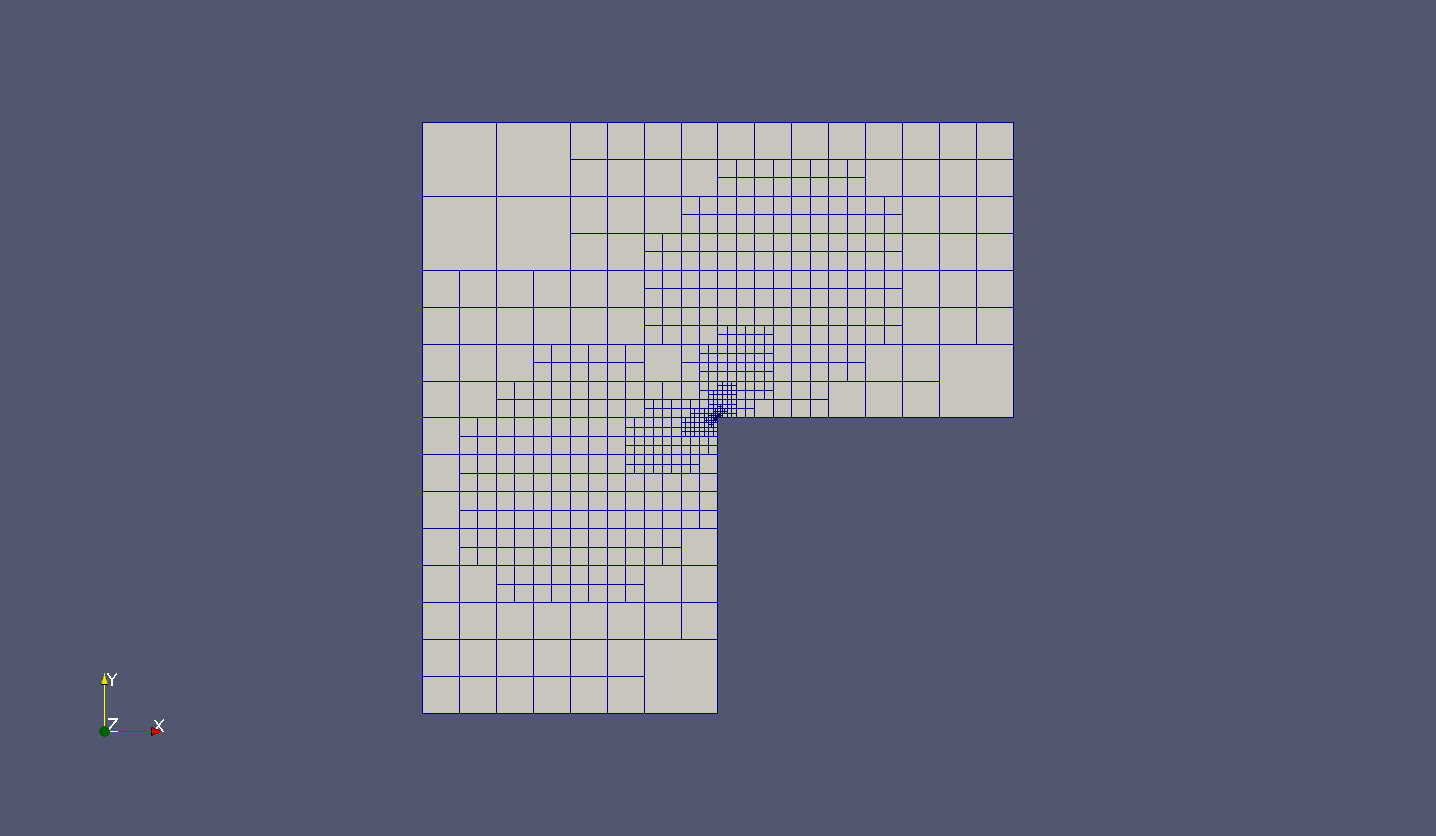}
        \caption{ L-shaped domain with refined mesh after 18 iterative mesh refinements. }
        \label{fig-lfinal_2D}
    \end{subfigure}
    \begin{subfigure}[t]{0.45\textwidth}
        \includegraphics[width=\textwidth]{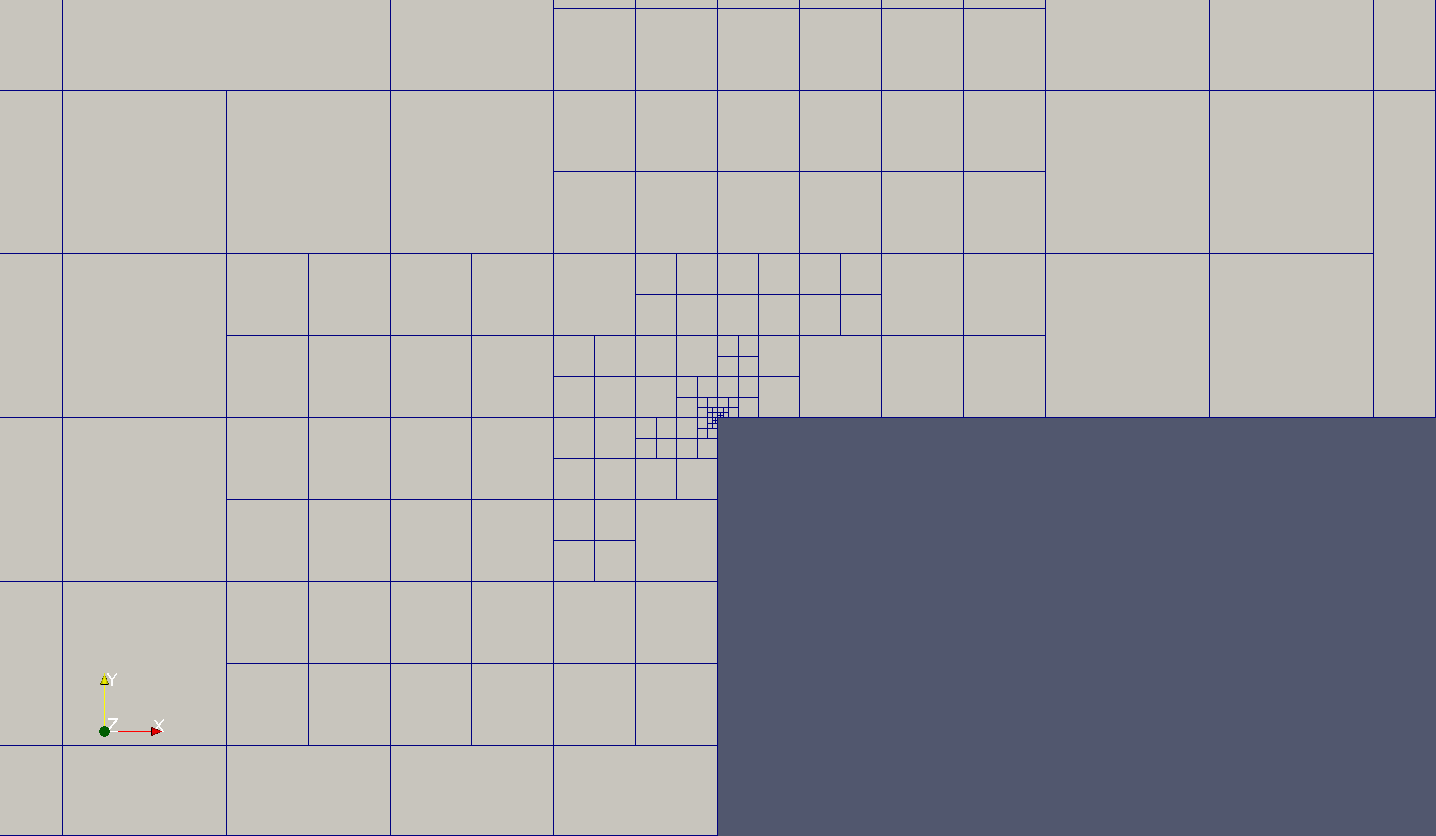}
        \caption{ Corner zoom of the refined mesh. }
        \label{fig-L_corner_zoom}
    \end{subfigure}
    \caption{fichera 2D problem. At each mesh refinement step the 5\% of cells with highest local cell $L_2$-error are refined.}\label{fig-fichera_2D}
\end{figure}

\begin{figure}[t!]
    \centering
    \begin{subfigure}[t]{0.45\textwidth}
        \includegraphics[width=\textwidth]{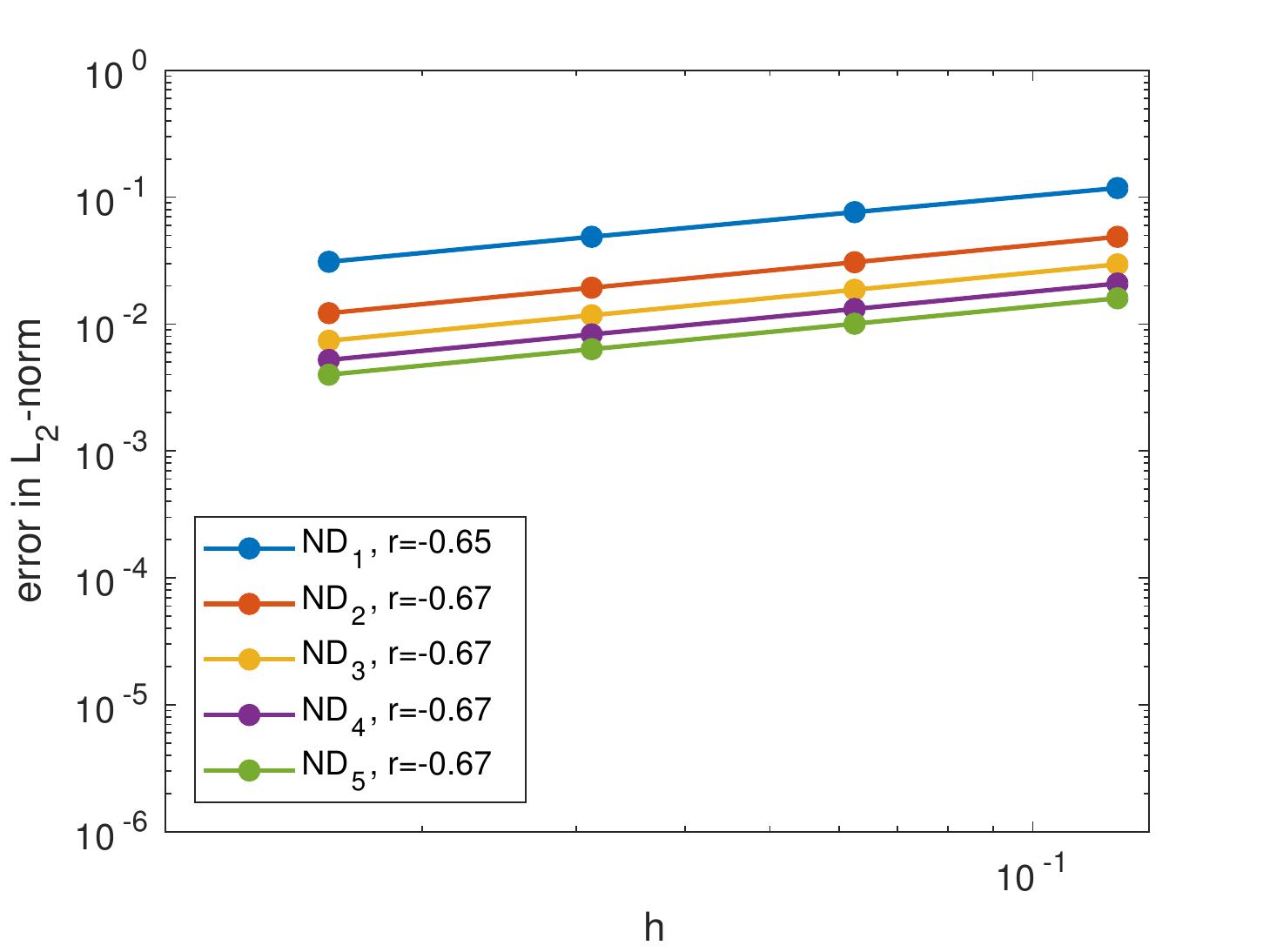}
        \caption{ $L_2$-norm of the error. }
        \label{fig-l2error_fich_2D_n1}
    \end{subfigure}
    \begin{subfigure}[t]{0.45\textwidth}
        \includegraphics[width=\textwidth]{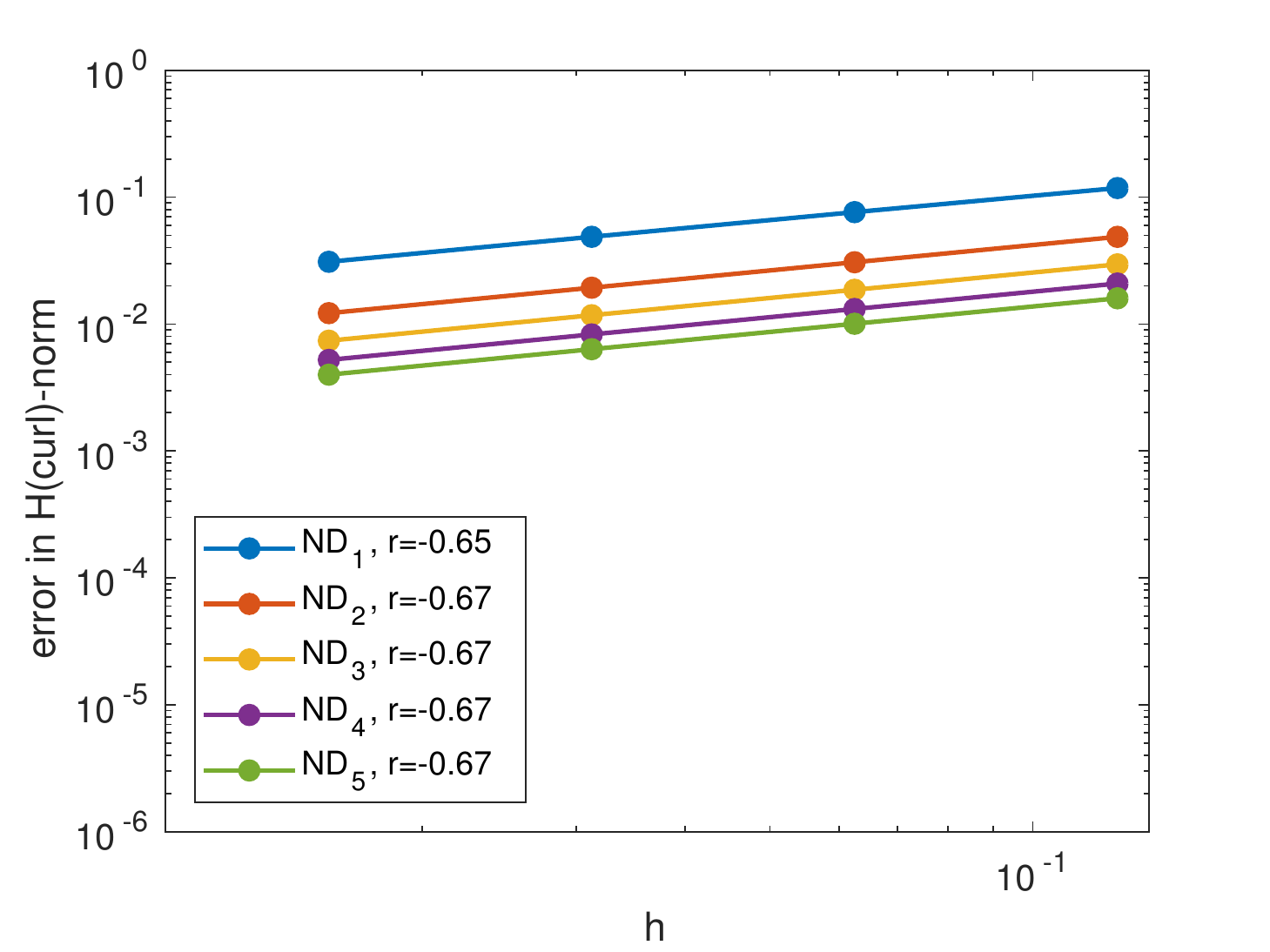}
        \caption{$H$(curl)-norm of the error. }
        \label{fig-Hcurl_error_fich_2D_n1}
    \end{subfigure}
    \caption{Error norms for the fichera 2D problem with uniform refinement for $n=1$.}\label{fig-err_fich_uni_2D}
\end{figure}

\begin{figure}[t!]
    \centering
    \begin{subfigure}[t]{0.45\textwidth}
        \includegraphics[width=\textwidth]{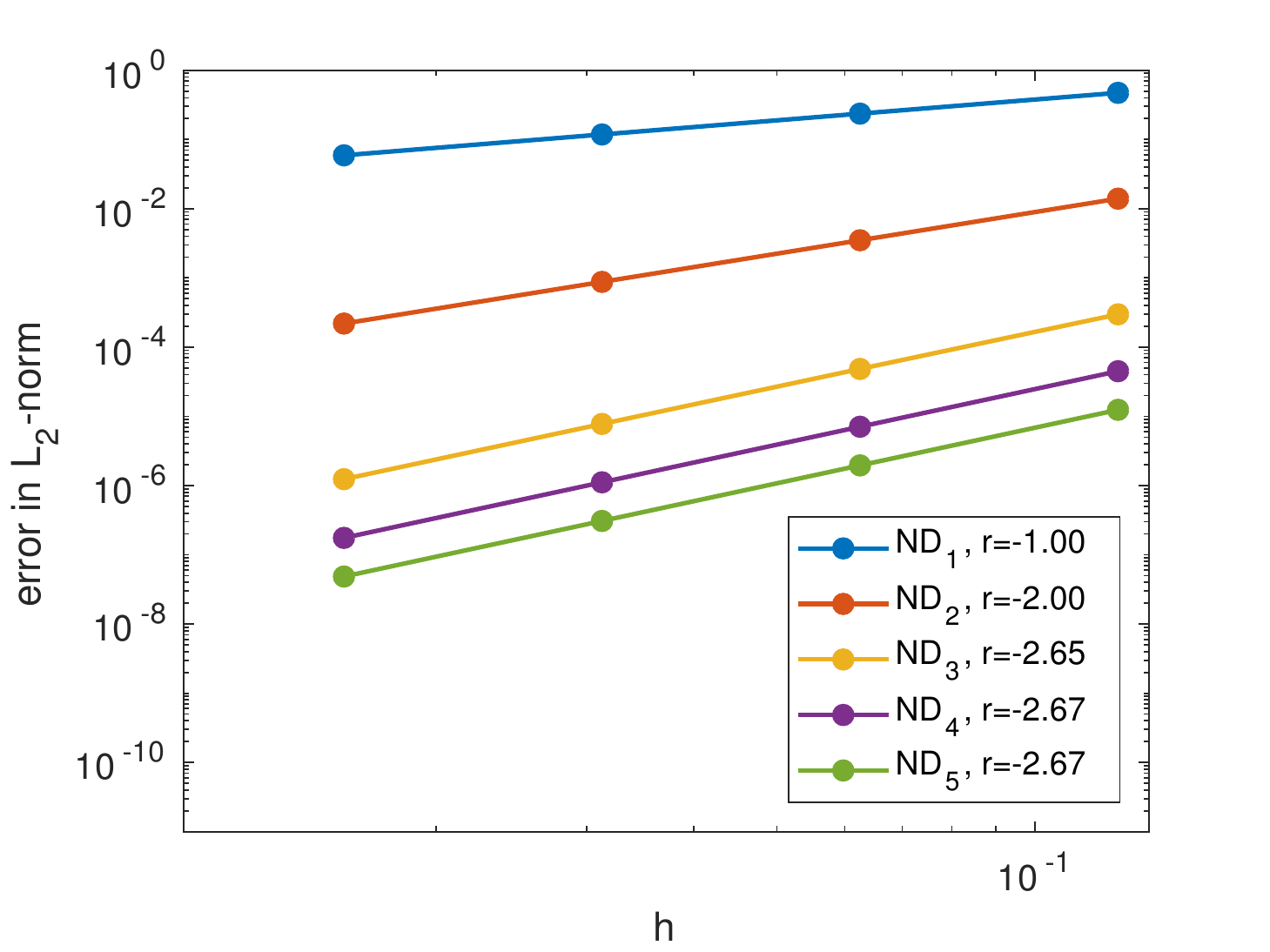}
        \caption{ $L_2$-norm of the error. }
        \label{fig-l2error_fich_2D_n4}
    \end{subfigure}
    \begin{subfigure}[t]{0.45\textwidth}
        \includegraphics[width=\textwidth]{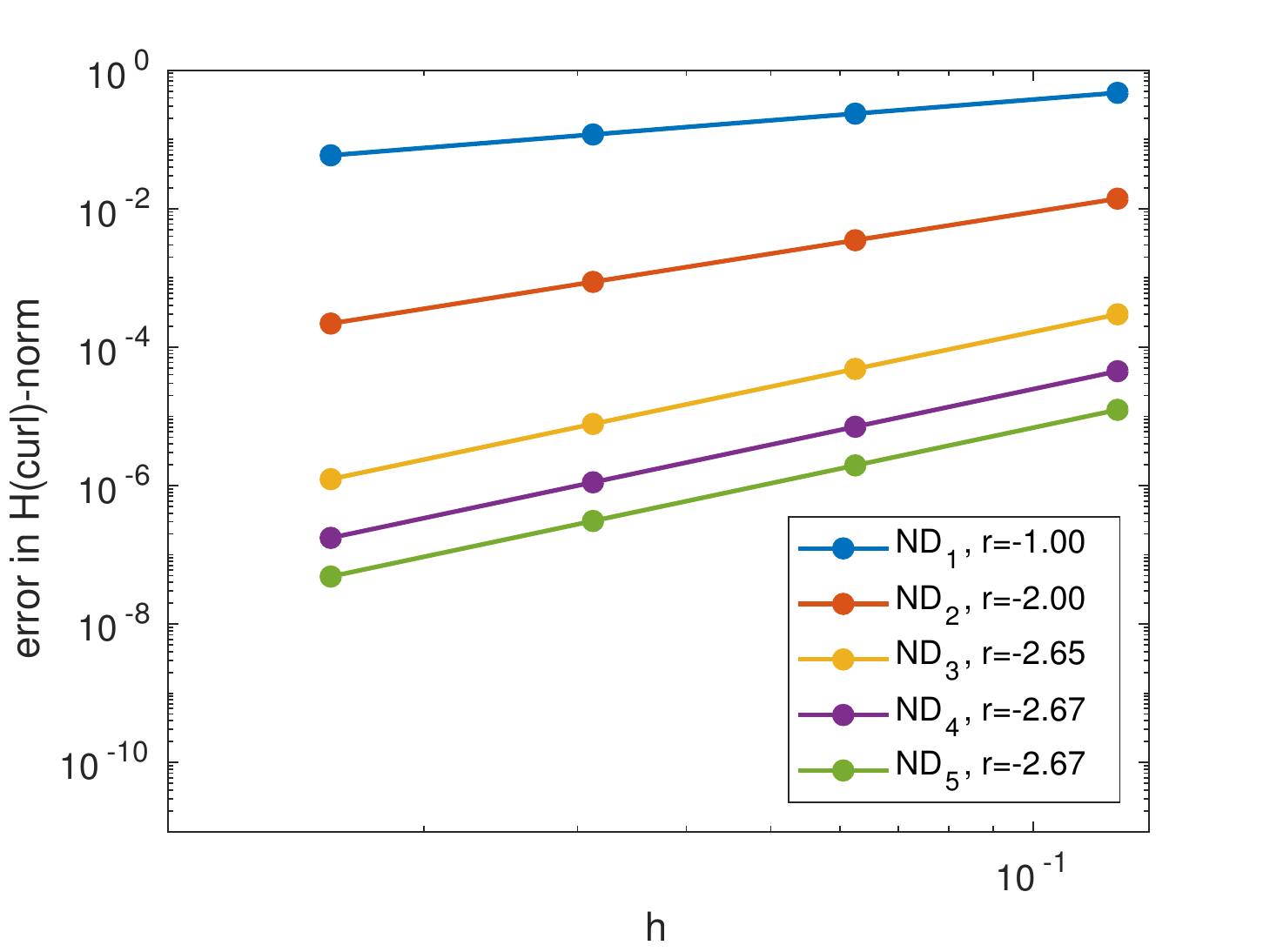}
        \caption{$H$(curl)-norm of the error. }
        \label{fig-Hcurl_error_fich_2D_n4}
    \end{subfigure}
    \caption{Error norms for the fichera 2D problem with uniform refinement for $n=4$.}\label{fig-err_fich_uni_2D_n4}
\end{figure}

In Figs.~\ref{fig-err_fich_uni_2D} and \ref{fig-err_fich_uni_2D_n4}, the theoretical convergence rates are achieved for every order of converge. In the case $n=1$, all \ac{fe} orders lead to the same convergence rate since $k>2/3$. For a smoother solution, corresponding to $n=4$, solutions converge to the expected order $\min(k, 8/3)$. Next, we analyze the error with adaptive refinement. The refinement process is such that, at every iterate, the 5\% of cells with higher local contribution to the $L_2$-error are marked for refinement. The fraction of cells to be refined is intentionally chosen to be small since we aim to obtain a localized refinement around the singularity. Figs.~\ref{fig-lshaped_2D} and \ref{fig-lfinal_2D} show the initial mesh and the final mesh after 18 refinement steps, resp. In Fig. \ref{fig-err_fich_ref_2D}, we plot the ($L_2-$ or $H({\rm curl})-$) error against the number of \acp{dof}. We show two plots for each order, namely uniform refinement (solid line) and adaptive refinement (solid line with circles). In all cases, better efficiency is achieved by adaptive meshes, i.e., less error for a given number of \acp{dof}. %Efficiency can be understood here in two different ways: given a goal accuracy we can save computational cost by using a lower number of \acp{dof} (i.e. efficiency in terms of \acp{dof}) or given a fixed number of \acp{dof}, the $h$-adaptive scheme will return a lower error for the solution (i.e. efficiency in terms of accuracy).
\begin{figure}[t!]
    \centering
    \begin{subfigure}[t]{0.45\textwidth}
        \includegraphics[width=\textwidth]{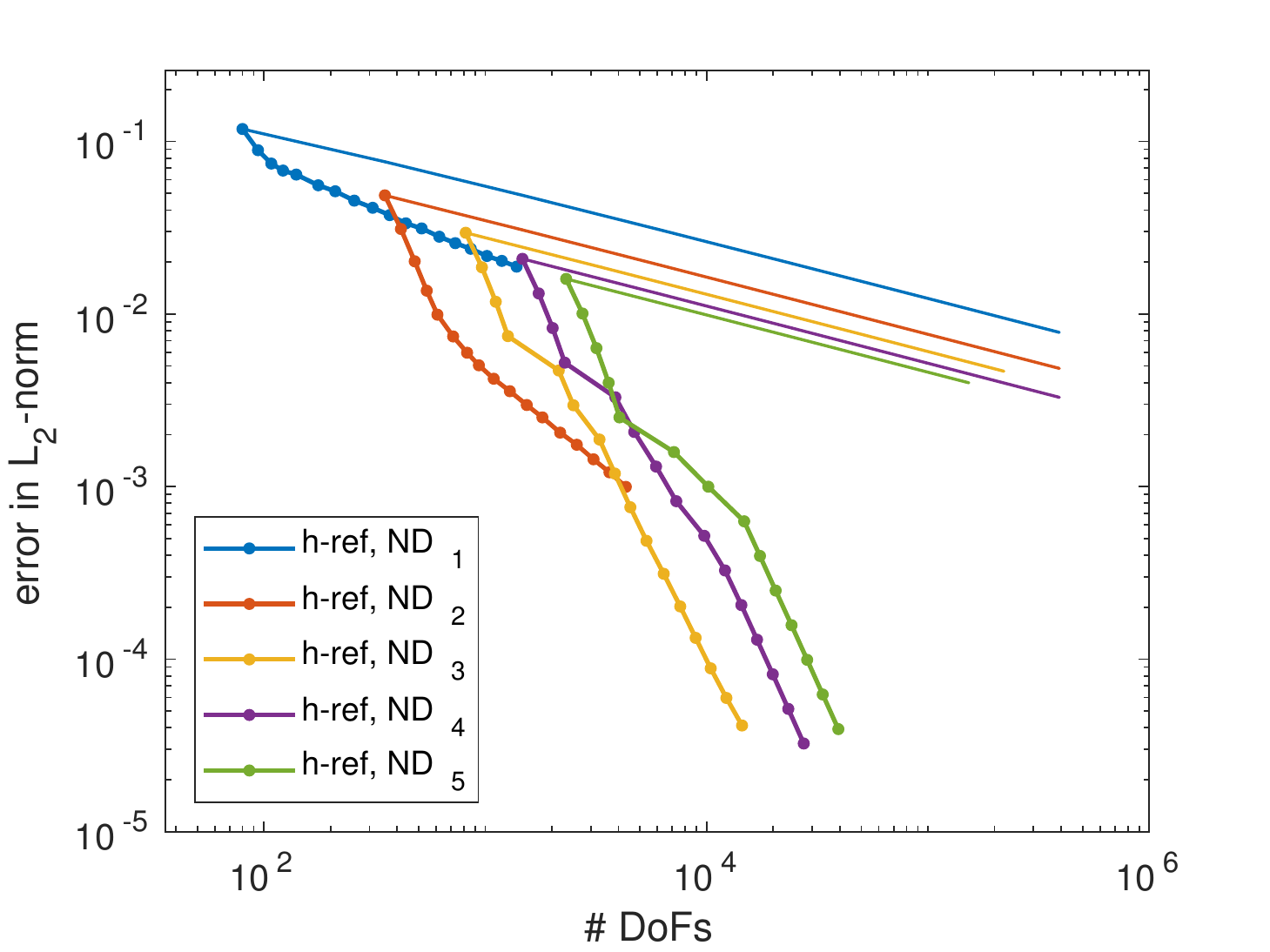}
        \caption{ $L_2$-norm of the error with number of \acp{dof}. }
        \label{fig-l2e_amr_2D}
    \end{subfigure}
    \begin{subfigure}[t]{0.45\textwidth}
        \includegraphics[width=\textwidth]{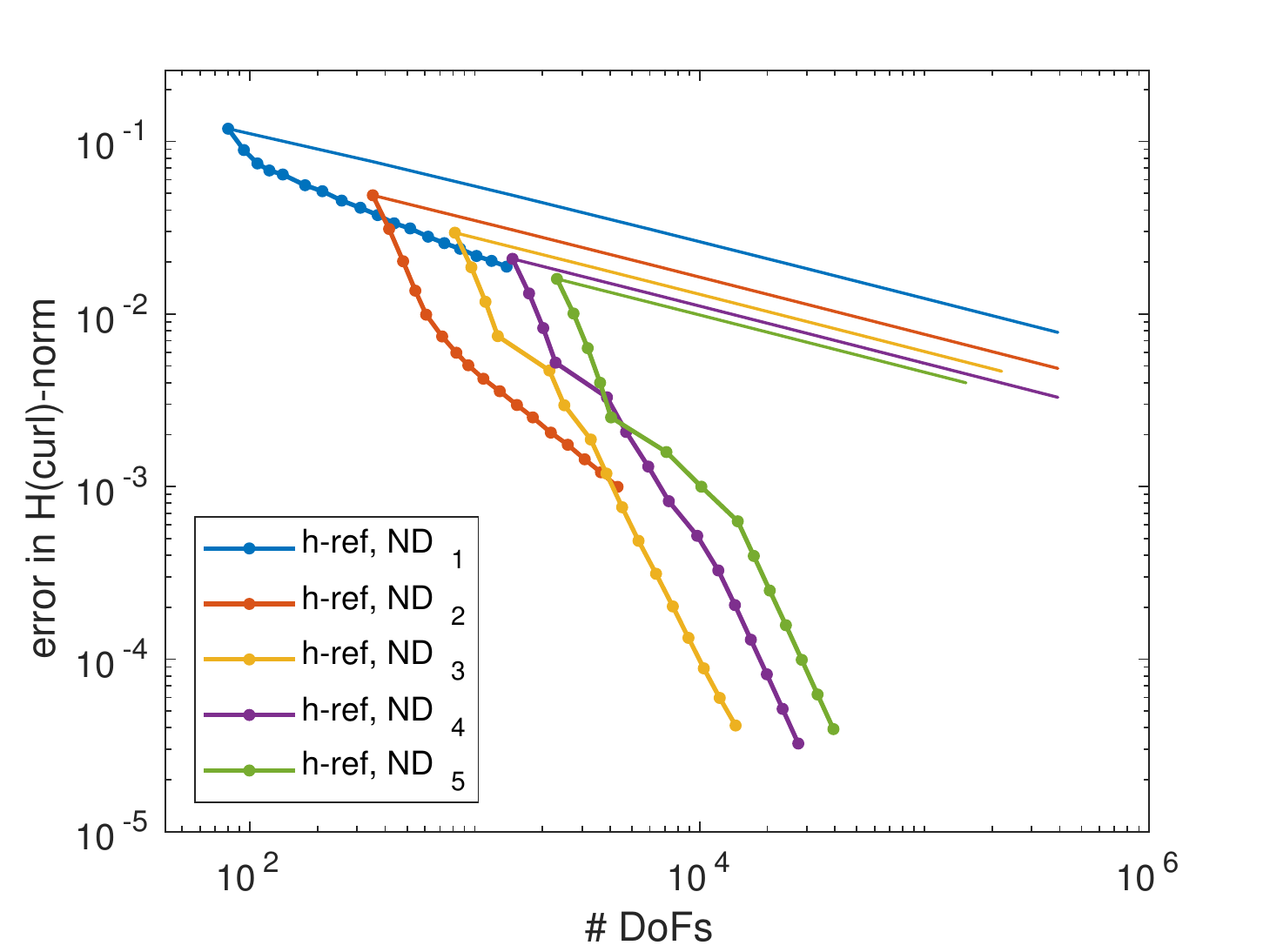}
        \caption{ $H$(curl)-norm of the error with number of \acp{dof}. }
        \label{fig-Hcurle_amr_2D}
    \end{subfigure}

    \caption{Error norms for the fichera 2D with $n=1$ and adaptive refinement, which at every iterate marks for refinement the 5\% of cells that show the highest local cell $L_2$-error. Lines without markers show the error convergence with uniform refinement process for every \ac{fe} order.}\label{fig-err_fich_ref_2D}
\end{figure}

Let us now consider the Fichera domain $\Omega=[-1,1]^3 \setminus [-1,0]^3$. The source term $\f$ is such that the solution is
\begin{eqnarray}
\uu = \bsnabla \left( r^{\frac{2}{3}} \sin \left( \frac{2t}{3} \right) \right) && t = \arccos{\left( \frac{xyz}{r} \right)},
\end{eqnarray}
where $r$ is the radius in 3D polar coordinates. The analytical solution has a singular behaviour near the origin and again $\uu \notin H^1(\Omega)$. We follow the same analysis to determine the efficiency of the $h$-adaptive scheme.
% I would omit it because I don't like the fact no Dirichlet at all, but we test here the problem with Neumann-type boundary conditions (see \eq{eq-neum_cond}).
\begin{figure}[t!]
    \centering
    \begin{subfigure}[t]{0.45\textwidth}
        \includegraphics[width=\textwidth]{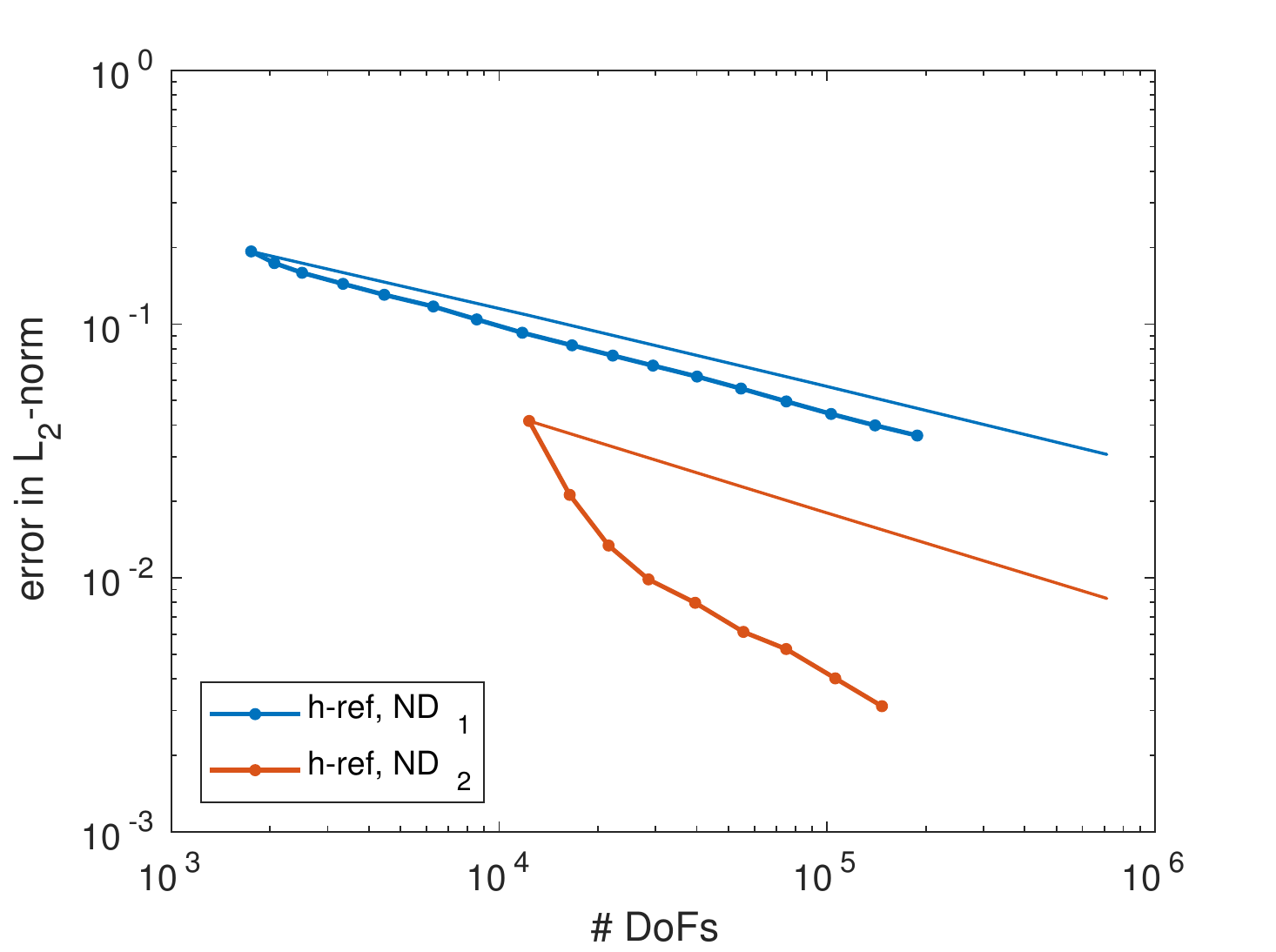}
        \caption{ $L_2$-norm of the error with number of \acp{dof}. }
        \label{fig-l2e_amr_3Dd}
    \end{subfigure}
    \begin{subfigure}[t]{0.45\textwidth}
        \includegraphics[width=\textwidth]{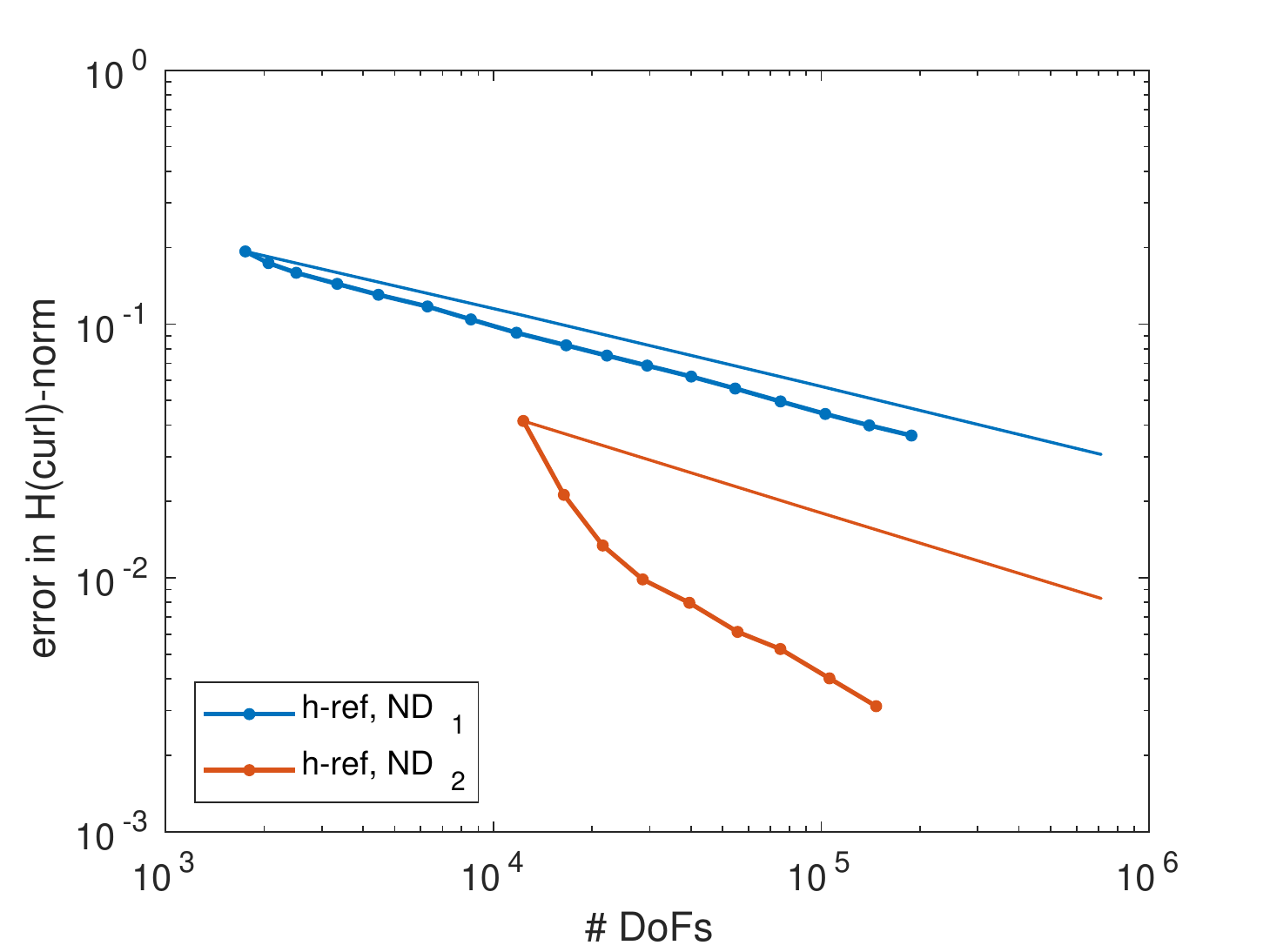}
        \caption{ $H$(curl)-norm of the error with number of \acp{dof}. }
        \label{fig-Hcurle_amr_3Dd}
    \end{subfigure}
       \caption{Error norms for the Fichera 3D problem with uniform (straight line) and adaptive refinement. 5\% of cells that have the highest local $L_2$-error marked for refinement at every refinement step. }\label{fig-fichera_4D}
\end{figure}

Fig.~\ref{fig-fichera_4D} shows the error vs. the number of \acp{dof} of the $h$-adaptive refinement process. In Fig.~\ref{fig-fichera_3Dd}, we illustrate the refinement process that takes place from an initial structured mesh composed of $8^3$ elements (see Fig.~\ref{fig-fichcubel2}). Clearly, the closer a cell K
is to the corner (see Fig.~\ref{fig-sol_fich_cubel2}), the higher the final $\ell(K)$. However, Fig.~\ref{fig-sol_fich_cubel2} also shows that the refinement is not as localized as in the 2D case, with a greater portion of the domain with higher levels of refinement. This fact has a clear impact on the efficiency achieved by the adaptive refinement for first order edge \acp{fe}, where the efficiency gain is mild. On the other hand, the gain for second order \acp{fe} is noticeably higher. Finally, Figs.~\ref{fig-fich_finall2} and \ref{fig-cubezooml2} show the refined mesh after 12 refinement iterates and a zoom at the corner with the singularity.

\begin{figure}[t!]
    \centering
    \begin{subfigure}[t]{0.45\textwidth}
        \includegraphics[width=\textwidth]{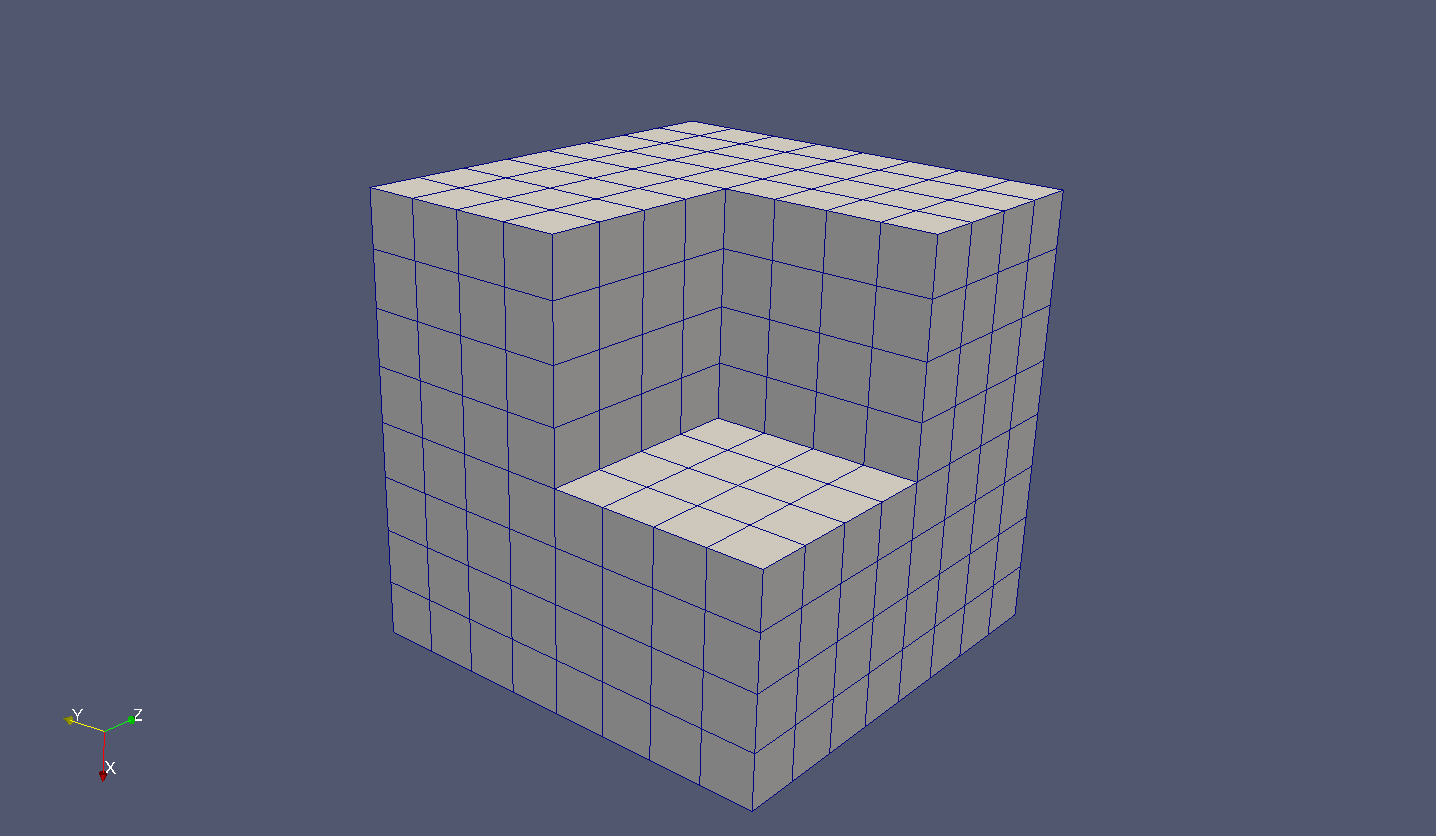}
        \caption{Initial mesh for the Fichera cube domain.}
        \label{fig-fichcubel2}
    \end{subfigure}
    \begin{subfigure}[t]{0.45\textwidth}
        \includegraphics[width=\textwidth]{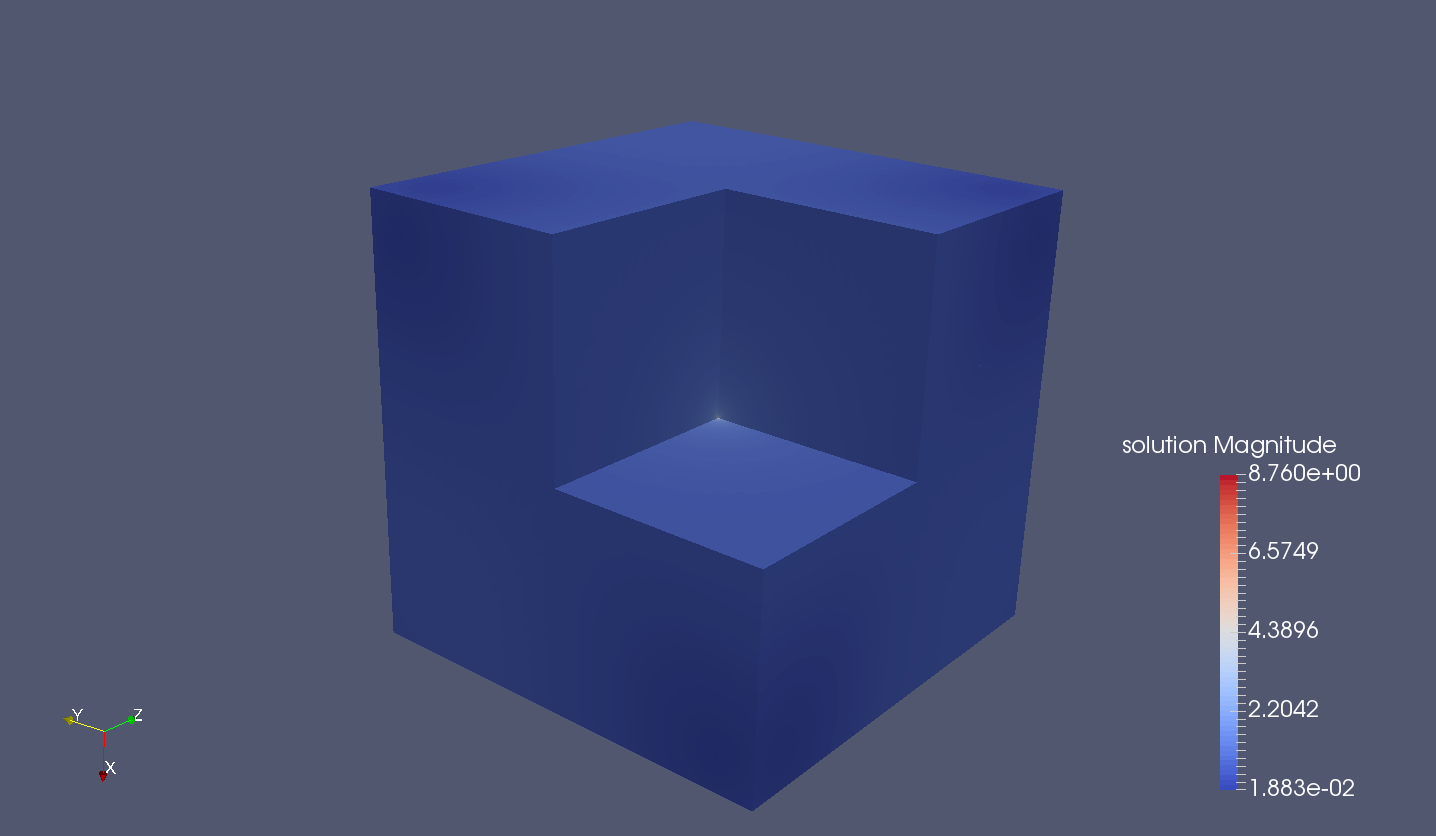}
        \caption{Analytical solution for the 3D Fichera cube. }
        \label{fig-sol_fich_cubel2}
    \end{subfigure}

        \begin{subfigure}[t]{0.45\textwidth}
        \includegraphics[width=\textwidth]{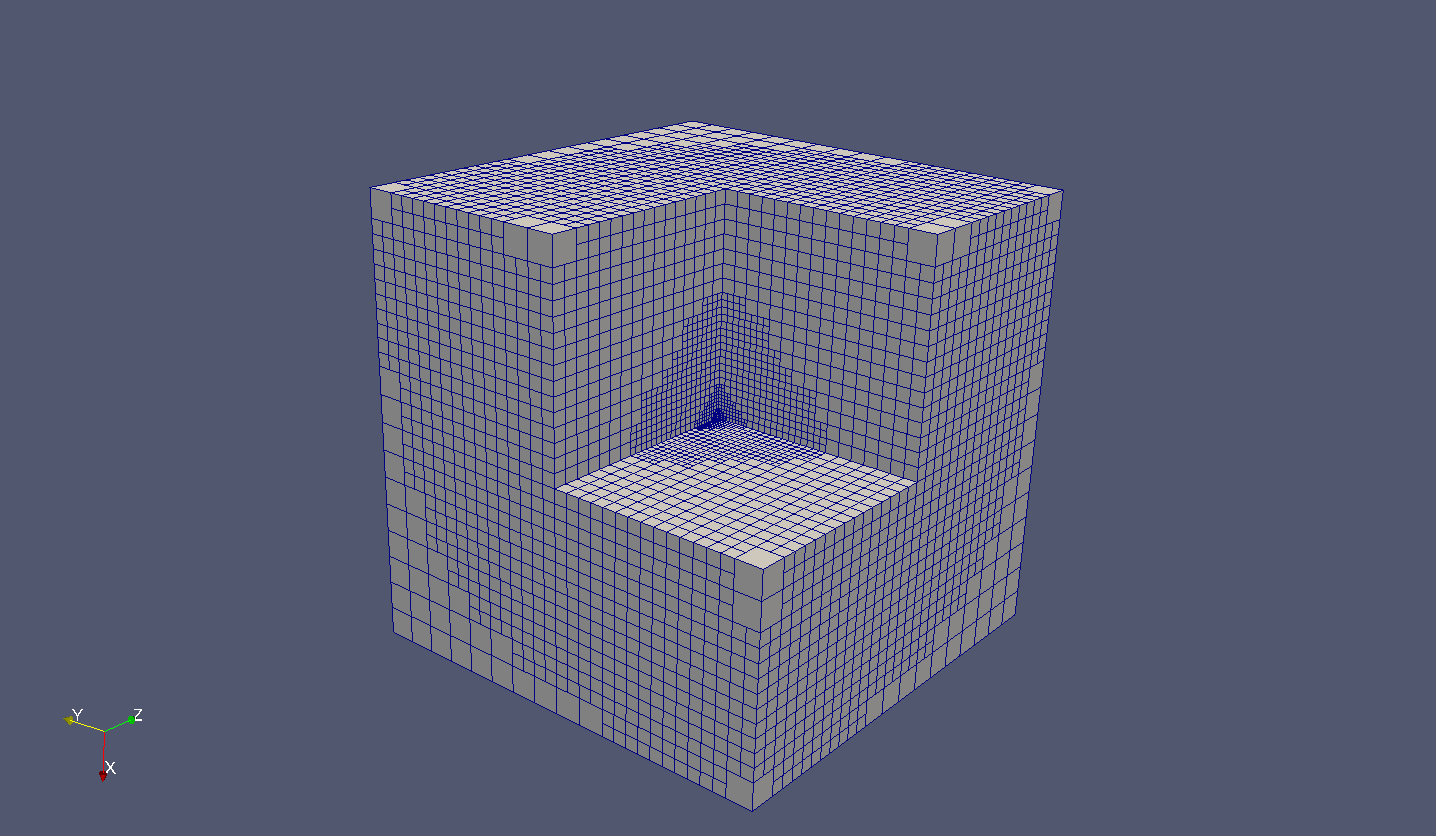}
        \caption{ Mesh after 12 iterative refinement steps for the Fichera cube domain. }
        \label{fig-fich_finall2}
    \end{subfigure}
    \begin{subfigure}[t]{0.45\textwidth}
        \includegraphics[width=\textwidth]{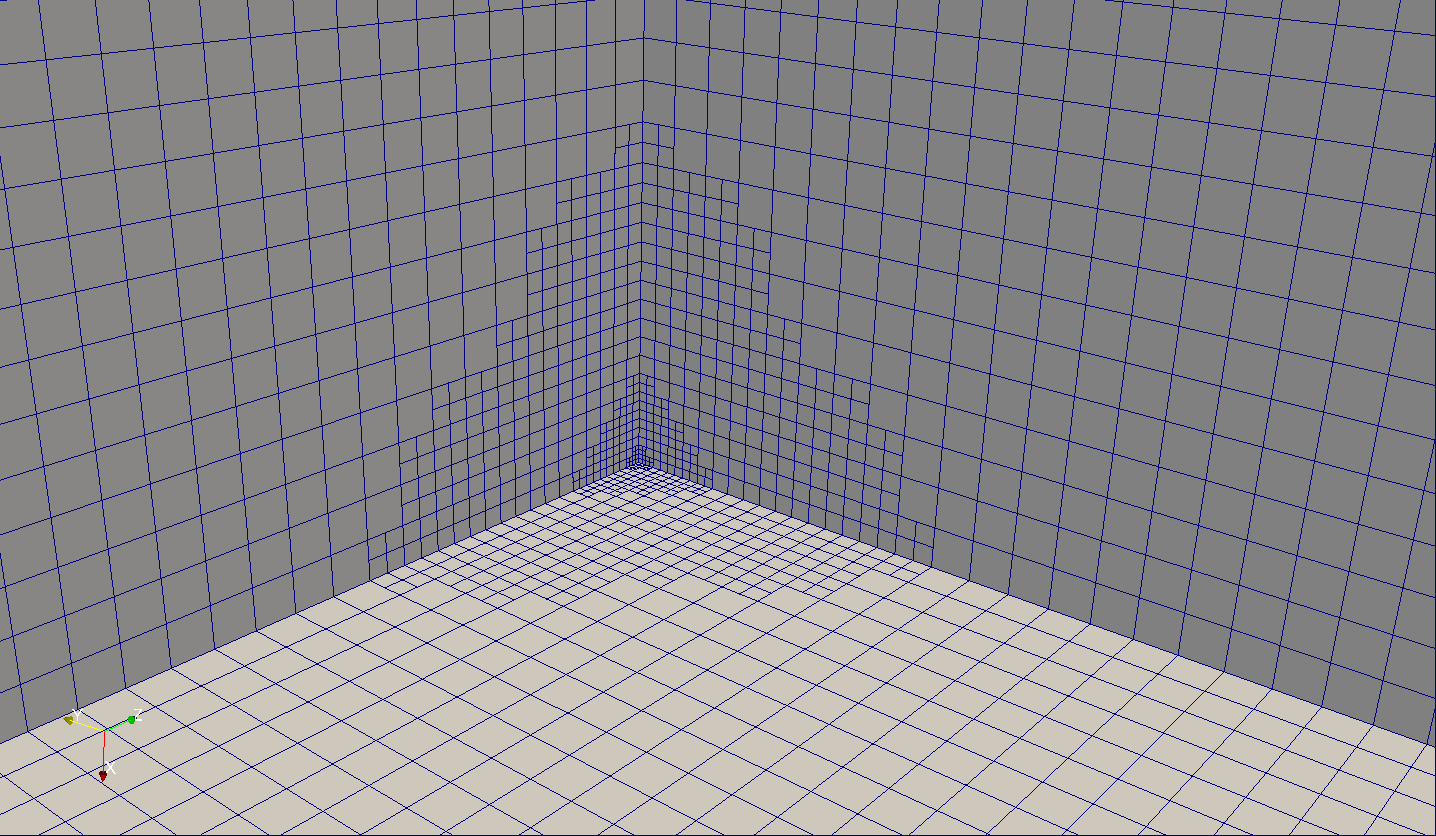}
        \caption{Corner zoom for the 3D Fichera cube. }
        \label{fig-cubezooml2}
    \end{subfigure}
    \caption{Adaptive meshes for the Fichera 3D problem. Cells with highest 5\% Local $L_2$-error(K) are refined at each refinement step.}\label{fig-fichera_3Dd}
\end{figure}

\section{Conclusions}\label{sec-conclusions}

In this work, we have covered in detail a general implementation of $p$-adaptive and $h$-adaptive tetrahedral and hexahedral edge \ac{fe} methods. We have implemented {pre-bases} that span the local \ac{fe} spaces (anisotropic polynomials) which combined with a change of basis automatically provide the shape functions bases. It leads to a general arbitrary order implementation, confronted with hard-coded implementations that preclude high order methods. In order to guarantee the tangent continuity of Piola-mapped elements, special care must be taken with the orientation of the cell geometrical entities. In the implementation, we require the local numbering of nodes within every element to rely on sorted global indices, i.e., oriented meshes. This manner, we automatically satisfy consistency in every geometrical entity shared by two or more \acp{fe}. Finally, we propose an original approach to implement global curl-conforming \ac{fe} space on hierarchically refined octree-based \emph{non-conforming} meshes. The strategy, which is straightforwardly extensible to any \ac{fe} based on polynomial spaces, is based on the original Lagrangian constraints and the interplay between the sets of Lagrangian and edge basis functions. To obtain every constraint, a sequence of spaces can be built so as we can avoid the evaluation of moments in the edge \ac{fe} space. A detailed set of numerical experiments served to test the implementation, where we show agreement between theoretical and numerical rates of convergence. The proposed approach has been implemented (for first kind edge $H$(curl)-conforming \ac{fe}) in \texttt{FEMPAR}, a scientific software for the simulation of problems governed by \acp{pde}.

We note that this framework can be extended to other polynomial-based \acp{fe}. Customizable ingredients are the original pre-basis of polynomials, the moments, the geometrical mapping, and the equivalence class of \acp{dof}. However, the change of basis approach to obtain the corresponding shape functions and the enforcement of continuity for non-conforming meshes is identical. In fact, the same machinery has already been used in \texttt{FEMPAR} to implement Raviart-Thomas \acp{fe} and can straightforwardly be used to implement Brezzi-Douglas-Marini \acp{fe} \cite{brezzi_mixed_1991}, second kind edge \acp{fe} \cite{nedelec_new_mixed_1986}, or recent divergence-free \acp{fe} \cite{neilan_stokes_2015}.

We believe that the comprehensive description of all the implementation issues behind edge \ac{fe} method provided herein will be of high value for other researchers and developers that have to purport similar developments and increase their penetration in the computational mechanics community.

\begin{small}
\bibliographystyle{unsrt}
\bibliography{art032}
\end{small}

\end{document}